\long\def\ignore#1{}
\newcommand{\anote}[1]{\authnote{ András}{#1}{green}}
\newcommand{\tnote}[1]{\authnote{ Tom}{#1}{red}}
\newtheorem{theorem}{Theorem}
\newtheorem{corollary}[theorem]{Corollary}
\newtheorem{lemma}[theorem]{Lemma}
\newtheorem{definition}[theorem]{Definition}
\newtheorem{claim}[theorem]{Claim}
\newtheorem{conjecture}[theorem]{Conjecture}
\newenvironment{proof}
{\noindent {\bf Proof. }}
{{\hfill $\Box$}\\	\smallskip}
\renewcommand{\P}{\mathbb{P}}
\newcommand{\R}{\mathbb{R}}
\newcommand{\E}{\mathbb{E}}
\newcommand{\II}[1]{\mathrm{II}^{(#1)}}
\newcommand{\RI}[1]{\mathrm{RI}^{(#1)}}
\newcommand{\BA}[1]{\mathrm{BA}^{(#1)}}
\newcommand{\Res}[1]{\#\textsc{Asel}\left(#1\right)}
\newcommand{\Tog}[1]{\#\textsc{toggles}\left(#1\right)}
\newcommand{\bigO}[1]{\mathcal O(#1)}
\renewcommand{\(}{\left(}
\renewcommand{\)}{\right)}
\newcommand{\id}{\mathrm{Id}}
\title{The Power Light Cone of the Discrete Bak-Sneppen, Contact and other local processes}
\author{
    \begin{tabular}{c}
        Tom Bannink\footnote{Qusoft, CWI, University of Amsterdam, Science Park 123, 1098 XG Amsterdam, Netherlands.}\;\,\setcounter{footnote}{2}\footnote{Supported by the NWO Gravitation-grant QSC and NETWORKS-024.002.003, and by EU grant QuantAlgo.}\\
        \footnotesize{\texttt{bannink@cwi.nl}}
    \end{tabular}
    \and
    \begin{tabular}{c}
        Harry Buhrman\textsuperscript{$\ast$}\textsuperscript{\ddag}\\
        \footnotesize{\texttt{buhrman@cwi.nl}}
    \end{tabular}
    \and
    \begin{tabular}{c}
        András Gilyén\textsuperscript{$\ast$}\footnote{Supported by ERC Consolidator Grant QPROGRESS and QuantERA project QuantAlgo 680-91-034.}\\
        \footnotesize{\texttt{gilyen@cwi.nl}}
    \end{tabular}
    \and
    \begin{tabular}{c}
        Mario Szegedy\footnote{ {Alibaba Quantum Laboratory, Alibaba Group, Bellevue, WA 98004, USA, Also supported by NSF grants 1422102 and 1514164.}}\\\footnotesize{\texttt{mario.szegedy@alibaba-inc.com}}
    \end{tabular}
}
\date{\today}
\begin{document}
\maketitle{}
\begin{abstract}
    We consider a class of random processes on graphs that include the discrete Bak-Sneppen (DBS) process and the several versions of the contact process (CP), with a focus on the former. These processes are parametrized by a probability $0\leq p \leq 1$ that controls a local update rule. Numerical simulations reveal a phase transition when $p$ goes from 0 to 1. Analytically little is known about the phase transition threshold, even for one-dimensional chains.
	In this article we consider a power-series approach based on representing certain quantities, such as the survival probability or the expected number of steps per site to reach the steady state, as a power-series in $p$.
    We prove that the coefficients of those power series stabilize as the length $n$ of the chain grows. 
    This is a phenomenon that has been used in the physics community but was not yet proven. We show that for local events $A,B$ of which the support is a distance $d$ apart we have $\mathrm{cor}(A,B) = \mathcal{O}(p^d)$.
	The stabilization allows for the (exact) computation of coefficients for arbitrary large systems which can then be analyzed using the wide range of existing methods of power series analysis.
\end{abstract}

\section{Introduction}
In physics, critical behaviour involves systems in which correlations decay as a power law with distance. It is an important topic in many areas of physics and can also be found in stochastic processes on graphs. Often, such systems have a parameter (e.g. temperature) and when it is set to a critical value, the system exhibits critical behaviour. Power series expansion techniques have been used in the physics literature to numerically approximate critical values and associated exponents. It was often observed that the coefficients of such power series stabilize when the system size grows, and we provide a rigorous proof of this for a large class of stochastic processes.

Self-organized criticality is a name common to models where the critical behaviour is present but without the need of tuning a parameter. A simple model for evolution and self-organized criticality was proposed by Bak and Sneppen \cite{BakSneppen1993} in 1993. In this random process there are $n$ vertices on a cycle each representing a species. Every vertex has a fitness value in $[0,1]$ and the dynamics is defined as follows. Every time step, the vertex with the lowest fitness value is chosen and that vertex together with its two neighbors get replaced by three independent uniform random samples from $[0,1]$. The model exhibits self-organized criticality, as most of the fitness values automatically become distributed uniformly in $[f_c,1]$ for some critical value $0<f_c<1$. This process has received a lot of attention \cite{Boer1994,Marsili1994,Bak1996,Marsili1998}, and a discrete version of the process has been introduced in \cite{Barbay2001}. In the discrete Bak-Sneppen (DBS) process, the fitness values can only be $0$ or $1$. At every time step, choose a uniform random vertex with value $0$ and replace it and its two neighbors by three independent values, which are $0$ with probability $p$ and $1$ with probability $1-p$. The DBS process has a phase transition with associated critical value $p_c$ \cite{Meester2002,Bandt2005}.

The Bak-Sneppen process was originally described in the context of evolutionary biology but its study has much broader consequences, e.g., the process was rediscovered in the setting of theoretical computer science \cite{ResampleLimit}. To study the limits of a randomized algorithm for solving satisfiability, the discrete Bak-Sneppen process turned out to be a natural process to analyze.

The DBS process is closely related to the so-called contact process (CP), originally introduced in~\cite{Harris1974}. Sometimes referred to as the basic contact process, this process models the spreading of an epidemic on a graph where each vertex (an individual) can be healthy or infected. Infected individuals can become healthy (probability $1-p$), or infect a random neighbor (probability $p$). The contact process has also been studied in the context of interacting particle systems and many variants of it exist, such as a parity-preserving version~\cite{Inui1995} and a contact process that only infects in one direction~\cite{Tretyakov1997}. Depending on the particular flavor of the processes, the CP and DBS processes are closely related \cite{Bandt2005} and in certain cases have the same critical values. The processes are similar in the sense that vertices can be \emph{active} (fitness 0 or infected) or \emph{inactive} (fitness 1 or healthy). The dynamics only update the state in the neighborhood of active vertices with a simple local update rule. In this article we consider a wide class of processes that fit this description, and our proofs are valid in this general setting. We will, however, focus on the DBS process when we present explicit examples.

In this paper we take a power-series approach and represent several probabilities and expectation values as a power series in the parameter $p$. There is a wealth of physics literature on series analysis in the theory of critical phenomena, see for example \cite{Hunter1973,Baker1973,Hunter1979} for an overview. Processes typically only have a critical point when the system size is infinite, but numerical simulations often only allow for probing of finite systems. Our main theorem proves, for our general class of processes, that one can extract coefficients of the power series for an arbitrary large system by computing quantities in only a finite system. One can then apply series analysis techniques to these coefficients of the large system.
Series expansion techniques are not new, they have been extensively used for variants of the contact process as well as for closely related directed percolation models~\cite{Dickman1989,Jensen1993,Inui1995,Inui1996,Tretyakov1997,Inui1998,Katori1999} in order to extract information about critical values and exponents.
For example, in~\cite{Tretyakov1997} the contact process on a line is studied where infection only happens in one direction. In~\cite{Inui1995} a process is studied where the parity of the number of active vertices is preserved. In both articles, the power series of the survival probability is computed up to 12 terms and used to find estimates for the critical values and exponents. 
However, in all this work the stabilization of coefficients has been observed\footnote{Some work uses stabilization in the number of time steps instead of system size. However, for understanding the critical behavior, system size is the relevant parameter.} but not proven. 

\textbf{Our main contribution} is a definition of a general class of processes that encapsulates most of the above processes
(Definition \ref{def:plup}) and an in-depth understanding of the stabilization phenomenon, complete with a rigorous proof 
(Lemma~\ref{lemma:distancePower}, Theorem~\ref{thm:stabilization}). The results are illustrated with examples.

\textbf{Road map.} In Section~\ref{sec:stabilization} we will provide two example power series that exhibit the stabilization phenomenon. In Section~\ref{sec:plc} we will sketch our results without going into technicalities and explain the intuition behind them, something that we call the Power Light Cone.
In Section~\ref{sec:plup} we define our general class of processes in more detail and provide our theorems with their proofs.
In Section~\ref{sec:numerics} we apply our result to the DBS process, and we compute power-series coefficients for several quantities. As an application, we apply the method of Pad\'e approximants to extract an estimate for $p_c$ and we estimate a critical exponent that suggests that the DBS process is in the directed percolation universality class.

\subsection{Stabilization of coefficients} \label{sec:stabilization}

There are different ways of defining the DBS process. These definitions map to each other in straightforward ways, and only slightly differ in the notion of a ``time step''. For example, one can define a time step as picking a random vertex (not necessarily with value $0$) and then not do anything when it has value $1$ but still count it as a time step. To study the process for an infinite-sized system, one can consider a continuous-time version of this process with exponential clocks at every vertex. Resampling of a vertex and its neighbors happens when the clock of the vertex rings \emph{and} the current value of the vertex is zero. 

In this section we consider the discrete-time process on a finite system, where a $0$-vertex is picked in every time step, and the process \emph{terminates} when an all-$1$ state is reached. Furthermore, we will always refer to the vertex-state $0$ as \emph{active} and $1$ as \emph{inactive}.

From numerical simulations it is apparent that there is a phase transition in the DBS process when $p$ goes from 0 to 1. There is some critical probability $p_c$ such that for $p < p_c$ the active vertices quickly die out and the system is pushed toward a state with no active vertices. However for $p > p_c$, the active vertices have the overhand and dominate the system. This phase transition can clearly be seen in Figure~\ref{fig:plotreachend}
from two different perspectives:
(a) We have computed the expected number of steps per vertex, on a cycle of length $n$, to reach the all-inactive state, where the vertices are initialized i.i.d. to active with probability $p$. (b) We have computed the probability that the end of a (non-periodic) chain is activated when the process is started with only one active vertex on the other end.

\begin{figure}
	\begin{center}
		\includegraphics[draft=false,width=0.48\textwidth]{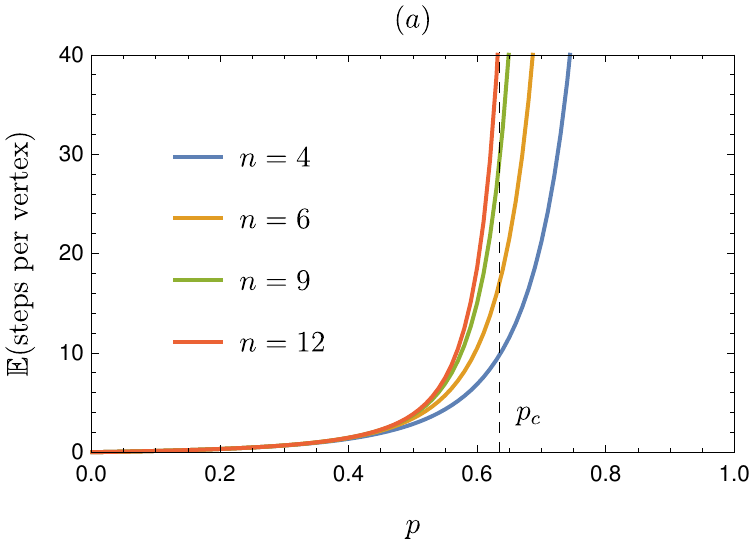}
        \;
		\includegraphics[draft=false,width=0.48\textwidth]{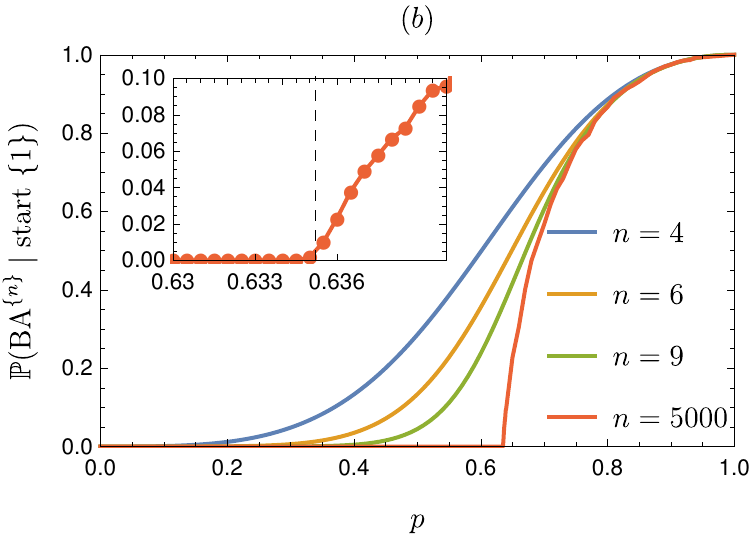}
        \caption{\label{fig:plotreachend} (a) Plot of $R_{(n)}(p)$, see~\eqref{eq:informalRseries}, the expected number of steps per vertex before the all-inactive state is reached, for the DBS process on a cycle with $n$ vertices. The process was started with independent values for each vertex, being active with probability $p$. (b) Plot of $S_{[n]}(p)$, see~\eqref{eq:informalQseries}, the probability to `reach' the other side of the system: the DBS process on a non-periodic chain of size $n$ is started with a single active vertex at position $1$ (denoted by $\text{start }\{1\}$) and we plot the probability that vertex $n$ ever \underline{b}ecomes \underline{a}ctive (denoted $\BA{n}$) before the all-inactive state is reached. For $n=5000$ the result was obtained with a Monte Carlo simulation. For the lower $n$, the results were computed symbolically. The inset shows a zoomed in version of the Monte Carlo data, showing that $p_c \approx 0.635$.
        }
	\end{center}
\end{figure}
Let us write these functions as a power-series in $p$ and in $q=1-p$ respectively.
\begin{align}
    R_{(n)}(p) &:= \frac{1}{n}\mathbb{E}(\text{total steps} \mid \text{start i.i.d.}) = \sum_{k=0}^{\infty} a^{(n)}_k p^k, \label{eq:informalRseries} \\
    S_{[n]}(q) &:= \P( \text{vertex $n$ becomes active} \mid \text{start } \{1\}) = \sum_{k=0}^{\infty} b^{[n]}_k q^k .\label{eq:informalQseries}
\end{align}
We will define these functions in more detail in Section~\ref{sec:numerics}, where we show, amongst other things, that they are rational functions for each $n$. For example
\begin{align*}
    R_{(4)}(p) &= \frac{p(6-12p+10p^2-3p^3)}{6(1-p)^4}
                = \frac{(1-q)(1+q+q^2+3q^3)}{6q^4} .
\end{align*}
\begin{figure}
    \begin{center}
        \includegraphics[draft=false,width=0.70\textwidth]{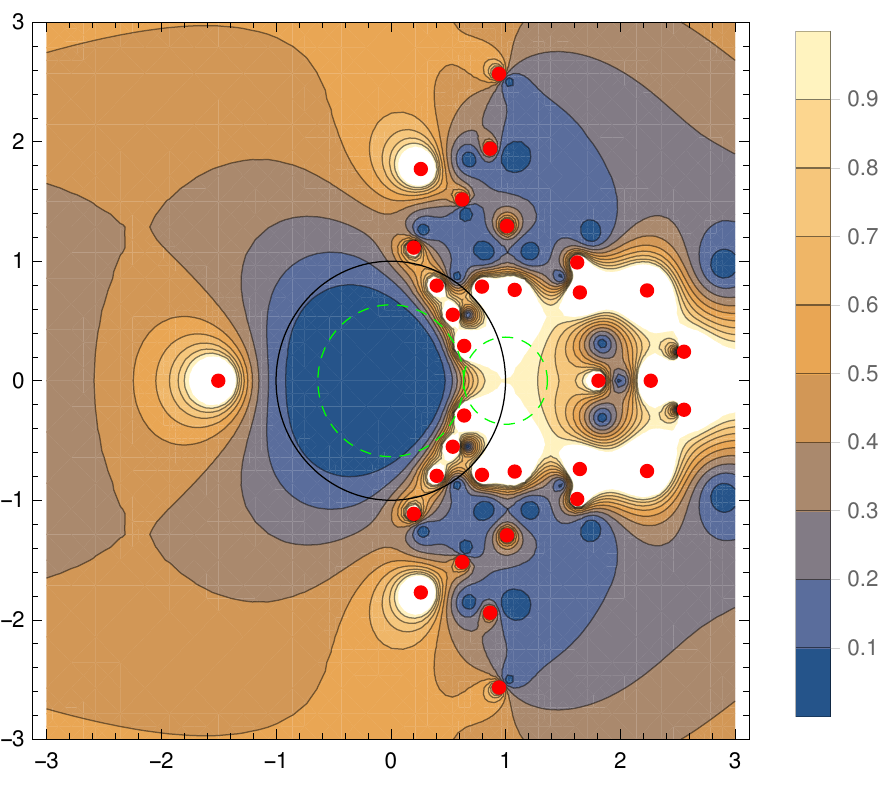}
        \caption{\label{fig:rationalplot} Plot of the function $\vert S_{(6)}(p) \vert$, defined in Equation~\eqref{eq:informalQseries}, over the complex plane with $p=0$ at the origin. The poles of the function are shown as red dots. The unit circle is shown in black, and the dashed green circles have radius $p_c$ around the origin, and radius $1-p_c$ around $p=1$.}
    \end{center}
\end{figure}
Although they only have an operational meaning for $p\in[0,1]$, we give a plot of such a function over the complex plane, see Figure~\ref{fig:rationalplot}. The plot shows the poles of $S_{(6)}(p)$, which seem to approach the value $p_c$ on the real line (for larger $n$ see Figure~\ref{fig:Qplotpoles}).
Something similar happens for partition functions in statistical physics. The partition function is usually in the denominator of observable physical quantities, so that its zeroes are the poles of such quantities. A classic result on the partition function for certain gasses~\cite{Yang1952} shows that when an open region around the real axis is free of zeroes, then many physical quantities are analytic in that region and therefore there is no phase transition. In~\cite{Regts2017} the hardcore model on graphs with bounded degree is studied, and it is proven that the partition function has zeroes in the complex plane arbitrary close to the critical point.

For now, we simply want to highlight the behaviour of the coefficients $a^{(n)}_k$ and $b^{[n]}_k$. Table~\ref{tab:coeffs} and Table~\ref{tab:Qcoeffs} show numerical values of the coefficients $a^{(n)}_k$ and $b^{[n]}_k$ respectively.\footnote{At first sight one is tempted to conjecture that the coefficients $a^{(n)}_k$ are all non-negative and are monotone increasing with $n$. Unfortunately neither of these conjectures hold since $a^{(10)}_{1114}<0$. We found this counterexample by observing that the radius of convergence for $R_{10}(p)$ is less than $0.96$, which considering that $R_{10}(p)$ is bounded on $[0,0.96]$ implies that there must be a negative coefficient in its power series.}
\begin{table}
	\centering
	\caption{Table of the coefficients $a^{(n)}_k$ of the power series defined in Equation~\eqref{eq:informalRseries}. Although displayed with finite precision, they were computed symbolically. }
	\label{tab:coeffs}
	\resizebox{\columnwidth}{!}{%
		\begin{tabular}{c|cccccccccccccccccc}
			\backslashbox[10mm]{$n$}{$k$} & 0 & 1 & 2 & 3 & 4 & 5 & 6 & 7 & 8 & 9 & 10 & 11 & 12 & 13 & 14 & 15 & 16 & 17 \\		\hline
			3 &	0 & 1 & \cellcolor{blue!25}2 & 3+1/3 & 5.00 & 7.00 & 9.33 & 12.00 & 15.00 & 18.33 & 22.00 & 26.00 & 30.33 & 35.00 & 40.00 & 45.333 & 51.000 & 57.000 \\
			4 &	0 & 1 & 2 & \cellcolor{blue!25}3+2/3 & 6.16 & 9.66 & 14.3 & 20.33 & 27.83 & 37.00 & 48.00 & 61.00 & 76.16 & 93.66 & 113.6 & 136.33 & 161.83 & 190.33 \\
			5 &	0 & 1 & 2 & 3+2/3 & \cellcolor{blue!25}6.44 & 10.8 & 17.3 & 26.65 & 39.43 & 56.48 & 78.65 & 106.9 & 142.2 & 185.8 & 238.7 & 302.41 & 378.05 & 467.13 \\
			6 &	0 & 1 & 2 & 3+2/3 & 6.44 & \cellcolor{blue!25}11.0 & 18.5 & 30.02 & 47.10 & 71.68 & 106.0 & 152.9 & 215.4 & 297.4 & 403.1 & 537.21 & 705.25 & 913.31 \\
			7 &	0 & 1 & 2 & 3+2/3 & 6.44 & 11.0 & \cellcolor{blue!25}18.7 & 31.21 & 50.83 & 80.80 & 125.3 & 189.7 & 280.8 & 407.0 & 578.6 & 808.13 & 1110.2 & 1502.6 \\
			8 &	0 & 1 & 2 & 3+2/3 & 6.44 & 11.0 & 18.7 & \cellcolor{blue!25}31.44 & 52.08 & 84.95 & 136.0 & 213.6 & 328.9 & 496.5 & 735.6 & 1070.7 & 1532.5 & 2159.5 \\
			9 &	0 & 1 & 2 & 3+2/3 & 6.44 & 11.0 & 18.7 & 31.44 & \cellcolor{blue!25}52.30 & 86.27 & 140.7 & 226.3 & 358.4 & 558.4 & 855.4 & 1289.0 & 1911.5 & 2791.4 \\
			10&	0 & 1 & 2 & 3+2/3 & 6.44 & 11.0 & 18.7 & 31.44 & 52.30 & \cellcolor{blue!25}86.49 & 142.1 & 231.6 & 373.4 & 594.8 & 934.4 & 1447.1 & 2209.0 & 3324.6 \\
			\vdots \\
            18& 0 & 1 & 2 & 3+2/3 & 6.44 & 11.08 & 18.76 & 31.45 & 52.31 & 86.49 & 142.33 & 233.31 & 381.17 & 621.02 & 1009.38 & 1637.13 & 2650.56 & \cellcolor{blue!25}4284.31 
		\end{tabular}
	}
\end{table}
\begin{table}
	\centering
	\caption{Table of the coefficients $b^{[n]}_k$ of the power series defined in Equation~\eqref{eq:informalQseries}. Although displayed with finite precision, they were computed symbolically.}
	\label{tab:Qcoeffs}
	\resizebox{\columnwidth}{!}{%
		\begin{tabular}{c|cccccccccccccccc}
			\backslashbox[10mm]{$n$}{$k$} & 0 & 1 & 2 & 3 & 4 & 5 & 6 & 7 & 8 & 9 & 10 & 11 & 12 & 13 & 14 & 15 \\ \hline
             3    & 1 & 0 & \cellcolor{blue!25}-2 & -2 & 0 & 4 & 6 & 2 & -8 & -16 & -10 & 14 & 40 & 36 & -18 & -94\\
             4    & 1 & 0 & -2 & \cellcolor{blue!25}-4 & -3.50 & 5.75 & 22.25 & 31.31 & 1.91 & -89.13 & -200.21 & -171.80 & 220.35 & 992.97 & 1513.93 & 352.89\\
             5    & 1 & 0 & -2 & -4 & \cellcolor{blue!25}-8.25 & -2.46 & 23.94 & 76.89 & 127.93 & 50.05 & -357.65 & -1208.56 & -2034.75 & -1004.08 & 5178.29 & 18688.02\\
             6    & 1 & 0 & -2 & -4 & -8.25 & \cellcolor{blue!25}-13.65 & 2.11 & 76.61 & 239.51 & 422.23 & 325.17 & -860.92 & -4423.36 & -10847.19 & -15746.83 & -2393.70\\
             7    & 1 & 0 & -2 & -4 & -8.25 & -13.65 & \cellcolor{blue!25}-24.58 & 19.20 & 221.39 & 689.37 & 1325.77 & 1325.82 & -1515.32 & -12291.20 & -38583.12 & -81814.55\\
             8    & 1 & 0 & -2 & -4 & -8.25 & -13.65 & -24.58 & \cellcolor{blue!25}-44.71 & 69.28 & 599.01 & 1939.94 & 3969.88 & 4778.48 & -1873.44 & -30668.24 & -112066.49\\
             9    & 1 & 0 & -2 & -4 & -8.25 & -13.65 & -24.58 & -44.71 & \cellcolor{blue!25}-84.21 & 196.95 & 1590.70 & 5328.67 & 11662.68 & 15977.89 & 1408.10 & -75058.95\\
             10   & 1 & 0 & -2 & -4 & -8.25 & -13.65 & -24.58 & -44.71 & -84.21 & \cellcolor{blue!25}-172.29 & 531.63 & 4131.50 & 14490.20 & 33615.60 & 51447.00 & 22246.40
		\end{tabular}
	}
\end{table}
A quick look at the table immediately reveals the stabilization of coefficients:
\begin{align*}
    a^{(n)}_k &= a^{(k+1)}_k \qquad \forall n \geq k+1 \qquad \text{ and }& b^{[n]}_k &= b^{(k+1)}_k \qquad \forall n \geq k+1 .
\end{align*}
We now know the first few terms of the power series for arbitrary large systems and we can proceed to use methods of series analysis. By applying the method of Pad\'e approximants, we can estimate $p_c \approx 0.6352$. More details on this can be found in Section~\ref{sec:numerics}.

\subsection{Locality of update rule implies stabilization} 
\label{sec:plc}

Our proofs are based on an observation that we call the Power Light Cone. Let $X$ be a set of vertices, and let $L_X$ be an event that is local on $X$, meaning that the event depends only on what happens to the vertices in $X$. For example, when $X=\{v_0\}$ and $L_X$ is the event that vertex $v_0$ is picked at least $r$ times, then $L_X$ is local on $X$. In Section~\ref{sec:plup} we will give a more precise definition of local events.
We now wish to compare the probability $\P(L_X)$ when the process is initialized in two different starting states, $A$ and $A'$. When $A$ and $A'$ differ only on vertices that are at least a distance $d$ away from $X$, then we have
\begin{align*}
    \P(L_X \mid \text{start in } A) - \P(L_X \mid \text{start in }A') = \bigO{ p^d } .
\end{align*}
By the notation $\bigO{p^d}$ we mean that when this quantity is written as power series in $p$, then the first $d-1$ terms of the series are zero. It only has non-zero terms of order $p^d$ and higher, i.e. the two probabilities agree on at least the first $d-1$ terms of their power series.
This is the essence of the Power Light Cone. A vertex that is a distance $d$ away from the set $X$ will only influence probabilities and expectation values of $X$-local events with terms of order $p^d$ or higher. The intuition behind this is that the probability of a single activation is $\bigO{p}$ and in order for such a vertex to influence the state of a vertex in $X$, it needs to form a chain of activations of size $d$ to reach $X$.
This observation will also allow us to compare the process on systems of different sizes. 
\begin{lemma}[Informal version of Lemma~\ref{lemma:distancePower}]
    Let $G$ and $G'$ be two graphs and let $X$ be a set of vertices present in both graphs such that the $d$-neighborhood of $X$ and the local update process (a single update may only affect a vertex and its neighbors) on it is the same in both graphs. Then for any event $L_X$ that is local on $X$ we have
    \begin{align*}
        \P_{G}(L_X) = \P_{G'}(L_X) + \bigO{p^d} .
    \end{align*}
\end{lemma}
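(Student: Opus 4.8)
The plan is a coupling argument together with a power‑counting estimate on the coupling error. I would realize the process on $G$ and the process on $G'$ on one probability space, sharing as much randomness as possible: using the equivalent formulation of the dynamics in which at each step a uniformly random vertex is picked and nothing happens (but time still advances) if it is inactive — so that the \emph{choice} of vertex carries no $p$‑dependence — I couple the two runs so that they pick the same vertex whenever that vertex lies in the common $d$‑neighbourhood $N_d(X)$, and I let them use the same resampled $0/1$‑values. I then track the \emph{discrepancy set} $\Delta_t$: the vertices of $N_d(X)$ on which the two coupled configurations currently disagree in value (I deliberately make no claim about vertices outside $N_d(X)$ or present in only one graph). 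Since $G$, $G'$, the update rule and the relevant part of the initial law all agree on $N_d(X)$, we have $\Delta_0=\emptyset$.

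The first step is a locality invariant for $\Delta_t$. If the picked vertex $v_t$ is not in $\Delta_t$ and has no neighbour in $\Delta_t$, then both runs see the same configuration on $\{v_t\}\cup N(v_t)$, perform the same (possibly void) resample, and $\Delta_{t+1}\subseteq\Delta_t$; if $v_t\notin\Delta_t$ but $v_t$ is adjacent to $\Delta_t$, then $v_t$ still has equal value in both runs, so the resample happens in both or in neither and $\Delta$ does not grow; only when $v_t\in\Delta_t$ can new discrepancies appear, and only at vertices of $N(v_t)$. Hence a discrepancy inside $N_d(X)$ can propagate only along edges, and only through vertices that are themselves discrepant (or adjacent to the region outside $N_d(X)$, where the two runs may already differ) and that are actually resampled. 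In particular, on the event $\{X\cap\Delta_t=\emptyset\text{ for all }t\}$ the two runs agree on the whole trajectory restricted to $X$ and on which vertices of $X$ are picked when, so any $X$‑local event $L_X$ gets the same indicator in both; therefore
\[
\bigl|\,\P_G(L_X)-\P_{G'}(L_X)\,\bigr|\;\le\;\P(\tau<\infty),\qquad \tau:=\inf\{t:\ X\cap\Delta_t\neq\emptyset\}.
\]

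The main step is to prove $\P(\tau<\infty)=\mathcal O(p^d)$ in the sense of formal power series in $p$, which is meaningful because on a finite graph the relevant quantities are rational in $p$. On $\{\tau<\infty\}$, tracing the propagation of discrepancies backwards from the first vertex of $X$ that becomes discrepant yields a path in $G$ that starts outside $N_d(X)$ (or on its boundary, at the interface with the region where the runs already differ), ends at a neighbour of $X$, and along which each vertex was, at some time, picked and \emph{actually resampled} in at least one of the two runs; since the path bridges a graph‑distance of order $d$ inside $N_d(X)$, it contains order $d$ such ``resample events''. The crux is that each resample event contributes a factor $\mathcal O(p)$: a vertex only becomes a fresh site of discrepancy when one of its neighbours is resampled, at which instant it receives a new value that is active with probability exactly $p$, and to itself be resampled later it must still be active. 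Turning this into a rigorous and tight count — i.e.\ showing that every finite trajectory prefix along which a discrepancy reaches $X$ has total weight divisible by $p^{d}$, so that the coefficients of $p^{0},\dots,p^{d-1}$ in $\P(\tau\le T)$ vanish uniformly in the horizon $T$ and hence in $\P(\tau<\infty)$ — is where the real work lies. One must handle that the resampled values are coupled, so a discrepancy is never manufactured ``for free''; that a pre‑existing active vertex near $X$ cannot short‑circuit the path (this is exactly where the agreement of the \emph{initial law} on $N_d(X)$ enters, and it is what pins down the exponent as $d$ rather than something smaller); and that the combinatorial $1/|\text{active set}|$‑type factors in a step weight do not disturb divisibility by $p$. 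I would carry this out by manipulating the monomial weights of trajectory prefixes directly (in the ``pick‑any‑vertex'' formulation) rather than via conditional‑independence arguments, which become slippery precisely around the question of which edges of the path are genuinely paid for in powers of $p$. Everything upstream of this counting step is routine coupling bookkeeping.
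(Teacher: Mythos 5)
Your overall architecture --- couple the two processes on one probability space, track a discrepancy set, and bound the difference of probabilities by the probability that a discrepancy ever reaches $X$ --- is a genuinely different route from the paper, which instead partitions $L_X$ according to the first ``shell'' $\overline{\partial}(X,i)$ that remains entirely inactive, applies the Splitting Lemma~\ref{lemma:splitting} at that shell, and inducts on $d$. The coupling picture is attractive, but the step you yourself flag as ``where the real work lies'' is exactly the content of the lemma, and the justification you sketch for it is not correct as stated. You claim that each link in the propagation chain costs a factor of $p$ because ``a vertex only becomes a fresh site of discrepancy when \ldots it receives a new value that is active with probability exactly $p$.'' This is false: a vertex that is active in both runs can become discrepant because it is resampled to \emph{inactive} in one run only, which carries a factor $1-p$, not $p$. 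The correct accounting (and the one the paper uses, via Lemma~\ref{lemma:activationpower}) charges not the discrepancy-creation event but the fact that every vertex that is ever discrepant, and every vertex that ever performs a propagating resample, must have been \emph{activated} at some earlier point, and each activation is $\bigO{p}$ by properties~\ref{prop:indepInit} and~\ref{prop:locUpdateRule}. To get the exponent $d$ you must then exhibit a vertex at every distance $1,\dots,d$ from $X$ that is forced to become active --- this is precisely what the paper's shell partition delivers and what your backward-traced path does not obviously give, especially since for a general PLUP the update distribution may depend on the current configuration of $\Gamma(v)$, so a non-discrepant active vertex adjacent to the discrepancy set can create new discrepancies anywhere in its neighbourhood and the discrepancy can advance two graph-distance units per resample.

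Two further points need repair. First, your ``pick a uniformly random vertex, do nothing if inactive'' coupling of the vertex choices does not preserve both marginals: the selection probability of a vertex $v$ in the common neighbourhood is $w_v/\sum_{u\in V}w_u$ in $G$ and $w_v/\sum_{u\in V'}w_u$ in $G'$, and these differ when the graphs do; the clean fix is the continuous-time clock formulation (selection rule~\ref{it:clock}), where each vertex's clock is an independent object that can be shared verbatim, which is exactly how the paper couples processes in the proof of Lemma~\ref{lemma:splitting}. Second, $\P(\tau<\infty)$ is a limit of the polynomials $\P(\tau\le T)$, and concluding that its low-order coefficients vanish requires a bound uniform in $T$, i.e.\ an absolute-convergence/rearrangement argument over infinitely many trajectories; you name this issue but do not resolve it, and the paper devotes Appendix~\ref{apx:absconvergence} to it. So the proposal is a plausible alternative skeleton, but the quantitative core --- that every trajectory on which a discrepancy reaches $X$ carries weight divisible by $p^{d}$, uniformly in trajectory length --- is missing, and the heuristic offered in its place does not survive the deactivation-induced discrepancies.
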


This idea applies to expectation values as well. Consider the expected number of steps per vertex on a cycle. By translation invariance, we have
\begin{align*}
    \frac{1}{n} \mathbb{E}(\text{total steps}) = \mathbb{E}(\text{\#times vertex 1 was picked})
\end{align*}
making it a $\{1\}$-local quantity. If we add an extra vertex to the cycle, the expectation value only changes by a term of order $\bigO{p^{n/2}}$ since the new vertex has distance $n/2$ to vertex $1$. 

\section{Parametrized local-update processes} \label{sec:plup}

The class of \emph{parametrized (discrete) local-update processes}, introduced in this section, includes the DBS, the CP and many other natural processes. We prove a general `stabilization of the coefficients theorem' for them, suggesting the usefulness of the power-series approach for members of the class.

Let $G=(V,E)$ be an undirected graph with vertex set $V$ and edge set $E$. We consider processes where every vertex of $G$ is either \emph{active} or \emph{inactive}. 
A \emph{state} is a configuration of active/inactive vertices, denoted by the subset of \emph{active} vertices $A\subseteq V$. For $v\in V$ let us denote by $\Gamma(v)$ the neighbors of $v$ in $G$ including $v$ itself. A local update process in each discrete time step picks a random active vertex $v\in A$ and resamples the state of its neighbors $\Gamma(v)$. If the state is $\emptyset$ (there are no active vertices) then the process stops and all vertices remain inactive afterwards.

\begin{definition}[PLUP - Parametrized local-update process] \label{def:plup}
    We say that $M_G$ is a \emph{parametrized local-update process} on the graph $G=(V,E)$ with parameter $p\in[0,1]$ if it is a time-independent Markov chain on the state space $\{\text{inactive},\text{active}\}^{V}$ that satisfies the following:
	\begin{enumerate}[label=(\roman*)]
        \item \label{prop:indepInit}
            \textbf{Initial state.} The initial value of a vertex is picked independently from the other vertices.
            The probability of initializing $v\in V$ as active is a polynomial in $p$ with constant term equal to zero.\footnote{The zero constant term is used, for example, in Lemma~\ref{lemma:activationpower}. The independence is used in Lemma~\ref{lemma:splitting}.}
        \item \label{prop:weightedPick}
            \textbf{Selection dynamics.} Each vertex $v\in V$ has a fixed positive weight $w_v$. A vertex $v\in V$ is selected using one of the three rules below, and if the selected vertex was active, then its neighborhood $\Gamma(v)$ is resampled using the parametrized local-update rule of vertex~$v$.\footnote{The properties of the selection dynamics are used in the proof of Lemma~\ref{lemma:splitting}}
            \begin{enumerate}
                \item \label{it:active}\textbf{Discrete-time active sampling.} In each discrete time step, an active vertex $v\in A$ is selected with probability $\frac{w_v}{\sum_{u\in A}w_u}$, where $A$ is the current state. 
                \item \label{it:all}\textbf{Discrete-time random sampling.} In each discrete time step, a vertex $v\in V$ is selected with probability $\frac{w_v}{\sum_{u \in V} w_u}$.
                \item \label{it:clock}\textbf{Continuous-time clocks.} Every vertex $v\in V$ has an exponential clock with rate $w_v$. When a clock rings, that vertex is selected, and a new clock is set up for the vertex.
            \end{enumerate}
    \item \label{prop:locUpdateRule} 
    \textbf{Update dynamics.} The parametrized local-update rule of a vertex $v\in V$ describes a (time-independent) probabilistic transition from state $A$ to $A'$ such that the states only differ on the neighborhood $\Gamma(v)$, i.e., $A\bigtriangleup A'\subseteq \Gamma(v)$. The probability $P_R$ of obtaining active vertices $R=A'\cap \Gamma(v)$ is independent of $A\setminus \Gamma(v)$. The probability $P_R$ is a polynomial in $p$ such that for $p = 0$ we get $A' \subsetneq A$ with probability $1$, i.e., when any previously inactive vertex becomes active ($\;|A' \setminus A| > 0$) \textit{or} when $A'=A$ then the constant term in $P_R$ must be zero.\footnote{The condition $|A'\setminus A|>0 \implies P_R = \bigO{p}$ is used in the proof of Lemma~\ref{lemma:activationpower}: a fresh activation is at least one power of $p$ so you need $p^k$ to cover a distance $k$. The extra condition $A'=A \implies P_R = \bigO{p}$ is used for absolute convergence in Claim~\ref{claim:finitecontribution} because without it you can have infinitely many paths with a finite power of $p$.}
    \item \textbf{Termination.} The process terminates when the all-inactive state $\emptyset$ is reached.
	\end{enumerate}
\end{definition}

We write $\P_G$ and $\E_G$ for probabilities and expectation values associated to the PLUP $M_G$.

\begin{definition}[Local events] \label{def:events}
	Let $G=(V,E)$ be a graph and let $M_G$ be a PLUP. Let $S\subseteq V$ be any subset of vertices, and let $v\in V$ be any vertex.
	\begin{itemize}
        \item Let $\II{S}$ be the event that all vertices in $S$ get \underline{i}nitialized as \underline{i}nactive.
        \item Let $\RI{S}$ be the event that all vertices in $S$ \underline{r}emain \underline{i}nactive during the entire process.
        \item Define $\BA{S}$ as the complement of $\RI{S}$: the event that there exists a vertex in $S$ that \underline{b}ecomes \underline{a}ctive at some point during the process, including initialization.
        \item Let $\Res{v}$ be the number of times that $v$ was \underline{sel}ected while it was \underline{a}ctive.
        \item Let $\Tog{v}$ be the number of times that the value of $v$ was changed. 
        \item We say an event $L$ is \textbf{local} on the vertex set $S$ if it is in the sigma algebra\footnote{If $p<1$, then the process terminates with probability $1$. If we remove the $0$ probability event of not terminating, then the discrete nature of the process implies that the probability space has a discrete set of atoms see, e.g., Definition~\ref{def:paths1} and Equation~\eqref{eq:sumtpaths}, therefore one does not need to worry about difficulties arising from $\sigma$-additivity.} generated by the events
		\begin{align*}
            \RI{v} \; , \; \BA{v} , \; (\Res{v} = k) \; , \; (\Tog{v} = k) \quad\colon v\in S, 0\leq k < \infty.
		\end{align*}
	\end{itemize}
\end{definition}

\begin{lemma}[Time equivalence] \label{lemma:time}
    The three versions of the selection dynamics of a PLUP, desribed in property~\ref{prop:weightedPick} of Definition~\ref{def:plup}, are equivalent for local events. That is, for any local event $L$ the probability $\P(L)$ is independent of the chosen selection dynamics in property~\ref{prop:weightedPick}.
\end{lemma}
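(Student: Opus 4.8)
The plan is to show that the three selection rules of property~\ref{prop:weightedPick} all induce the same distribution on what I will call the \emph{effective-move sequence}: the (possibly infinite) sequence $(v_1,R_1),(v_2,R_2),\dots$ where $v_i$ is the $i$-th \emph{active} vertex ever selected and $R_i\subseteq\Gamma(v_i)$ records which vertices of $\Gamma(v_i)$ are active immediately after the $i$-th resampling. Since the initialization rule (property~\ref{prop:indepInit}) is literally the same in all three cases, it suffices to reason about the selection-and-update phase, and every generator of the local $\sigma$-algebra of Definition~\ref{def:events} — namely $\RI{v}$, $\BA{v}$, $(\Res{v}=k)$, $(\Tog{v}=k)$ for $v\in S$ — is a deterministic function of the initial state together with this effective-move sequence: $\Res{v}$ is the number of indices $i$ with $v_i=v$, $\Tog{v}$ is the number of steps at which the value of $v$ actually flips (read off from the running state and the sets $R_i$), and $\RI{v}$, $\BA{v}$ are determined by whether $v$ is ever active. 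None of these see real-valued times or ``null'' selections of inactive vertices, so it is enough to check the three rules give the effective-move sequence the same law.

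For the two discrete rules I would couple the random-sampling chain~\ref{it:all} with the active-sampling chain~\ref{it:active} by \emph{deleting} from the former every step whose selected vertex is inactive. Such a step applies no update (the local-update rule fires only on active vertices), hence changes no value and affects none of $\Res{v},\Tog{v},\RI{v},\BA{v}$, so the effective-move sequence of~\ref{it:all} is exactly its sub-sequence of non-deleted steps. Writing $W=\sum_{u\in V}w_u$ and conditioning on the current state $A\neq\emptyset$, the next non-deleted step picks a given $v\in A$ with probability $\frac{w_v/W}{\left(\sum_{u\in A}w_u\right)/W}=\frac{w_v}{\sum_{u\in A}w_u}$, which is exactly the selection probability of~\ref{it:active}, and on picking $v$ the same polynomial-in-$p$ update is applied in both; hence the two effective-move sequences are equal in law. (Since $A\neq\emptyset$ and all weights are positive, only finitely many deletions occur before the next effective move, so the coupling is well defined, and both chains terminate together at $A=\emptyset$.)

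For the continuous-time rule~\ref{it:clock} I would pass to its embedded jump chain: by memorylessness of the exponential clocks and the standard competing-exponentials computation, at every instant the next clock to ring belongs to vertex $v$ with probability $\frac{w_v}{W}$, independently of the past and in particular of the current state, and when it rings $v$ is selected and, if active, $\Gamma(v)$ is resampled by the same rule. Thus the jump chain of~\ref{it:clock} has the same law as the random-sampling chain~\ref{it:all}, and together with the previous paragraph all three rules produce the same distribution on effective-move sequences and therefore assign every local event the same probability. (In passing, rule~\ref{it:clock} is non-explosive — finitely many vertices each firing at a finite rate — but this is never used, as only the jump chain matters.)

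The computations above are routine; the only real obstacle is the measure-theoretic bookkeeping, i.e., making ``effective-move sequence'' a bona fide random variable on each of the three probability spaces and checking that each generating event of the local $\sigma$-algebra is measurable with respect to it. As the footnote to Definition~\ref{def:events} observes, for $p<1$ the process terminates almost surely and the path space has countably many atoms, which removes any $\sigma$-additivity worry; for $p=1$ one instead proves equality on the generators by truncating to a bounded number of effective moves and passing to the limit. The degenerate cases — an empty initial state, or a resampling that returns the same state — are handled identically by all three rules and need no separate argument.
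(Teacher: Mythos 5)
Your proof is correct and follows essentially the same route as the paper: reducing local events to the sequence of active selections and updates, identifying rule~\ref{it:all} with rule~\ref{it:active} by discarding (rejection-sampling away) selections of inactive vertices, and identifying rule~\ref{it:clock} with rule~\ref{it:all} via the superposition/competing-exponentials property of the Poisson clocks. The extra care you take with measurability and the $p=1$ case goes slightly beyond the paper's sketch but does not change the argument.
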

\begin{proof}
    The three selection dynamics only differ in the counting of time, and the presence of self loops in the Markov Chain. The definition of local events only includes events that are independent of the way time is counted. They only depend on which active vertices are selected and the changes to the state of the graph.
    
    It is easy to see that \ref{it:all} implements the dynamics of \ref{it:active} via rejection sampling, therefore they give rise to the same probabilities. One can also see that on a finite graph the selection rule \ref{it:clock} induces the same selection rule as \ref{it:all}. This is because the exponential clocks induce a Poisson process at each vertex. The $n$ independent Poisson processes with rates $w_v$ are equivalent to one single Poisson process with rate $W=\sum_{v\in V}w_v$ but where each point of the single process is of type $v$ with probability $w_v / W$. One can simulate \ref{it:clock} by sampling a time value from an exponential distribution with parameter $W$ and then sampling a random vertex with probability $w_v/W$ (as in \ref{it:all}). Since the time is not relevant for local events we can ignore the sampled time value and this gives rise to the same probabilities.
\end{proof}

Our lemmas and theorems only concern local events and therefore we can use any one of the three selection dynamics when proving them.

\begin{definition}[Induced process]
    Suppose that $V'\subseteq V$, then we define the induced process $M_{G'}$ on the induced subgraph $G'=(V',E')$ 
    such that we run the process on $G$ and after each step we deactivate all vertices in $V\setminus V'$. We can then view this as a process on $G'$.
    Let $L$ be a local event on $V'$. We denote the probability of $L$ under the induced process $M_{G'}$ with $\P_{G'}(L)$. Similarly we use the notation $\E_{G'}$ for expectation values induced by the process $M_{G'}$.
\end{definition}
It is easy to see that the induced process of a PLUP is also a PLUP.

\begin{definition}[Graph definitions]
	Let $G=(V,E)$ be a graph, $S\subseteq V$ be any subset of vertices and $v\in V$ be any vertex.
	\begin{itemize}	
		\item Define $G\setminus S$ as the induced subgraph on $V\setminus S$ and $G\cap S$ as the induced subgraph on $S$.
		\item Define the $d$-neighbourhood $\Gamma(S,d)$ of $S$ as the set of vertices that are connected to $S$ with a path of length at most $d$. In particular $\Gamma(\{v\};1)=\Gamma(v)$.
		\item Define the distant-$k$ boundary $\overline{\partial}(S,k):=\Gamma(S,k)\setminus \Gamma(S,k-1)$ as the set of vertices lying at exactly distance $k$ from $S$, and let 	 $\overline{\partial}S:=\overline{\partial}(S,1)$.
	\end{itemize}
\end{definition}

The following lemma says that if a set $S$ splits the graph into two parts, then those two parts become independent under the condition that the vertices in $S$ never become active.
\begin{center}
	\includegraphics[scale=0.8,draft=false]{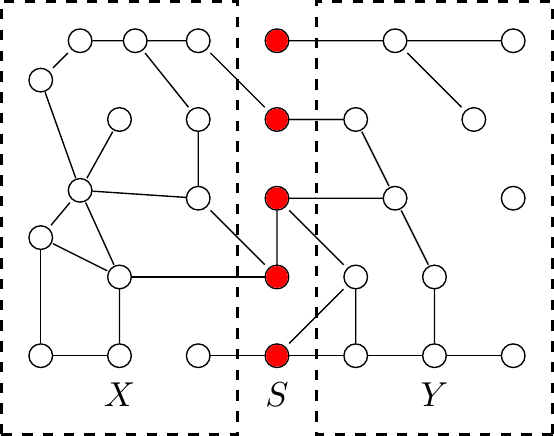}
\end{center}
\begin{lemma}[Splitting lemma]\label{lemma:splitting}
	Let $M_G$ be a parametrized local-update process on the graph $G=(V,E)$. Let $S,X,Y\subseteq V$ be a partition of the vertices, such that $X$ and $Y$ are disconnected in the graph $G\setminus S$. Furthermore, let $L_X$ and $L_Y$ be local events on $X$ and $Y$ respectively. Then we have
	\begin{align*}
        \P_{G}(\RI{S} \cap L_X \cap L_Y \mid \II{S})
        &=
        \P_{G\setminus Y}(\RI{S} \cap L_X \mid \II{S})
        \; \cdot \;
        \P_{G\setminus X}(\RI{S} \cap L_Y \mid \II{S}) .
	\end{align*}
\end{lemma}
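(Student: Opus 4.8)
The plan is to set up a coupling between the process on $G$ and a pair of independent processes, one on $G\setminus Y$ and one on $G\setminus X$, that works conditionally on $\II{S}$ and on the event $\RI{S}$. The key structural observation is that, on the event $\RI{S}$, no vertex of $S$ is ever resampled as active, and since $X$ and $Y$ are disconnected in $G\setminus S$, the neighborhood $\Gamma(v)$ of any active vertex $v\in X$ is contained in $X\cup S$, and likewise for $v\in Y$. Hence an update triggered by an active vertex in $X$ can only touch $X\cup S$, and an update triggered in $Y$ can only touch $Y\cup S$. So along any trajectory that lies in $\RI{S}$, the $X$-side history and the $Y$-side history evolve without interacting, except that both ``read'' the (frozen, all-inactive) state of $S$.

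The steps, in order. First I would pass to the discrete-time random-sampling selection dynamics \ref{it:all} using Lemma~\ref{lemma:time}, which is legitimate because $L_X$, $L_Y$, $\RI{S}$ and $\II{S}$ are all local events; under \ref{it:all} the per-step vertex choice is i.i.d.\ with weights $w_v$, which makes the two sides' selections independent. Second, I would describe the probability space via trajectories (finite sequences of (selected vertex, resampling outcome) pairs terminating at $\emptyset$, as alluded to around Equation~\eqref{eq:sumtpaths}) and restrict attention to the atoms contained in $\RI{S}\cap\II{S}$: on these, delete all steps that select a vertex of $S$ (which by $\RI{S}$ are all ``no-op'' steps since the selected $S$-vertex is inactive) and all steps selecting an inactive vertex of $X$ or $Y$; what remains splits uniquely into an $X$-subtrajectory and a $Y$-subtrajectory. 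Third, I would compute the probability weight of such a trajectory: by property~\ref{prop:indepInit} the initialization factorizes over vertices, so the $\II{S}$-conditioned initial law is a product of an $X$-part and a $Y$-part; by property~\ref{prop:locUpdateRule} each update factor $P_R$ depends only on the resampled neighborhood and not on the rest of the state, so each factor is attributable to exactly one side; and the selection probabilities $w_v/W$ multiply across steps. Interleaving the two sides' steps contributes a multinomial factor that, when one sums over all interleavings, telescopes away — this is the standard ``independent thinning/merging of the step sequence'' computation. Fourth, I would check that the process obtained on the $X$-side is exactly the induced process $M_{G\setminus Y}$ conditioned on $\RI{S}\cap\II{S}$ and restricted to $L_X$ (and symmetrically for $Y$): deactivating $Y$ after every step is vacuous on trajectories that never activate anything adjacent to an $X$-triggered update outside $X\cup S$, which is precisely the content of $X,Y$ being disconnected in $G\setminus S$; and the no-op $S$-steps present in $M_G$ but carrying weight $w_s/W$ versus the induced normalization $w_s/W'$ wash out after conditioning, again via Lemma~\ref{lemma:time} applied on the subgraphs.

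The main obstacle is the bookkeeping in the interleaving step: one must show that summing the product measure over all ways of shuffling a fixed $X$-subtrajectory of length $k$ and a fixed $Y$-subtrajectory of length $\ell$ (together with the irrelevant $S$- and inactive-vertex steps) reproduces exactly the product of the two marginal trajectory weights, with no leftover combinatorial factor. This is where properties~\ref{prop:weightedPick} and~\ref{prop:locUpdateRule} are used in an essential way — time-independence of the transition probabilities is what makes a step's weight depend only on which side it belongs to and not on its position in the global sequence, and the i.i.d.\ selection of rule~\ref{it:all} is what makes the shuffle-count a clean multinomial coefficient. A secondary subtlety is convergence: the sums over trajectories are infinite, so I would invoke the absolute-convergence guarantee (the $A'=A \implies P_R=\bigO p$ condition flagged in the footnote to property~\ref{prop:locUpdateRule}, cf.\ Claim~\ref{claim:finitecontribution}) to justify rearranging the double sum as a product. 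Modulo these points, the identity falls out by matching atoms on both sides.
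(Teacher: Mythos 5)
Your proposal is correct in its structural core and reaches the same key observation as the paper — that on $\II{S}\cap\RI{S}$ the set $S$ separates the two sides, so up to the first activation of $S$ the $X$-evolution is a function only of $X$'s own selection and update randomness (and symmetrically for $Y$), and that property~\ref{prop:locUpdateRule} makes each update factor attributable to one side. But your execution is genuinely different: the paper switches (via Lemma~\ref{lemma:time}) to the \emph{continuous-time clocks} dynamics~\ref{it:clock} and runs a coupling in which all three processes share per-vertex exponential clocks and per-vertex update randomness; the factorization then follows immediately because the two events depend on disjoint independent sources of randomness, with no trajectory bookkeeping at all. You instead stay in discrete time and sum over interleavings of an $X$-subtrajectory and a $Y$-subtrajectory. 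This can be made to work, but the one step you wave at — ``interleaving contributes a multinomial factor that telescopes away'' — is not literally a multinomial count and is the actual crux. Once you delete no-op steps, the effective selection probabilities have state-dependent denominators $w_v/\bigl(W_X(i)+W_Y(j)\bigr)$ where $W_X(i),W_Y(j)$ are the current active weights on each side; the identity
\begin{equation*}
\sum_{\text{interleavings}}\;\prod_{t}\frac{1}{W_X(i_t)+W_Y(j_t)}\;=\;\prod_{i}\frac{1}{W_X(i)}\;\prod_{j}\frac{1}{W_Y(j)}
\end{equation*}
holds only for \emph{terminating} pairs of subtrajectories (it fails for partial trajectories, as a two-step example shows), and its cleanest proof is precisely the continuous-time exponential-race representation the paper uses. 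Alternatively, under rule~\ref{it:all} you can keep the no-op steps and invoke the thinning property of i.i.d.\ selections (the $X\cup S$-subsequence of selected vertices is i.i.d.\ with the law of rule~\ref{it:all} on $G\setminus Y$ and is independent of the $Y$-side values), which is closer in spirit to the paper's coupling. Either fix closes the gap. Two minor remarks: your appeal to the $A'=A\implies P_R=\bigO{p}$ footnote for convergence is unnecessary here, since for fixed $p$ all terms are non-negative probabilities and rearrangement is justified by Tonelli; and note that $\RI{S}$, conditioned on $\II{S}$, is itself determined separately by each side's randomness (it fails exactly when an $X$- or a $Y$-triggered update first activates $S$), which is what lets you write it as the intersection of the corresponding events in the two subgraph processes.
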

The condition of initializing $S$ to inactive is present only to prevent counting the initialization probabilities twice. Equivalently we could write the condition only once:
\begin{align*}
	\P_{G}(\RI{S} \cap L_X \cap L_Y)
	&=
	\P_{G\setminus Y}(\RI{S} \cap L_X)
	\; \cdot \;
    \P_{G\setminus X}(\RI{S} \cap L_Y \mid \II{S}) ,
\end{align*}
and by Bayes rule we also have
\begin{align*}
    \P_{G}(L_X \cap L_Y \mid \RI{S})
	&=
    \P_{G\setminus Y}(L_X \mid \RI{S})
	\; \cdot \;
	\P_{G\setminus X}(L_Y \mid \RI{S}) .
\end{align*}

\begin{proof}
    We will use the `continuous-time clocks' version of selection dynamics (PLUP property~\ref{it:clock}). By Lemma~\ref{lemma:time} the statement will then hold for all versions.
    We proceed with a coupling argument. There are three processes, one on $G$ and the induced ones on $G\setminus Y$ and $G\setminus X$. We couple them by letting all three processes use the same source of randomness. Every vertex in $G$ has an exponential clock that is shared by all three processes, and the randomness used for the local updates for each vertex will also come from the same source. This means that when the clock of a vertex $v$ rings, and the neighborhood $\Gamma(v)$ is equal in different processes, then the update result will also be equal.
    Now we simply observe that $L_X \cap L_Y \cap \RI{S}$ holds in the $G$-process if and only if $L_X \cap \RI{S}$ holds in the $(G\setminus Y)$-process \emph{and} $L_Y \cap \RI{S}$ holds in the $(G\setminus X)$-process.
    This is because all vertices in $S$ are initialized as inactive (all three probabilities are conditioned on this), so a vertex in $S$ can only be activated by an update from a vertex in $X$ or $Y$. To check if the event $\RI{S}$ holds, it is sufficient to trace the process up to the first activation of a vertex in $S$. Before this first activation, anything that happends to the vertices in $X$ only depends on the clocks and updates of vertices in $X$, and similar for $Y$. Since $S$ splits $X$ and $Y$ in disconnected parts, these parts can not influence each other unless a vertex in $S$ is activated. Because of the coupling, the evolution of the $X$ vertices in $G\setminus Y$ will be exactly the same as the evolution in $G$, and similar for $Y$. Once a vertex in $S$ \emph{does} get activated, the evolution of the three processes is no longer the same but in that case the event $\RI{S}$ does not hold, regardless of any further updates in any system. The clocks and updates of each vertex are independent sources of randomness, and when $\RI{S}$ holds then all the randomness of the $S$ vertices is ignored. Therefore the probability of $\RI{S}$ in the $(G\setminus Y)$-process and $(G\setminus X)$-process depends only on independent random variables and we get the required equality.
\end{proof}

\subsection{Power Light Cone results} \label{sec:plcresults}

Now we present the results that exhibit the power light cone. The intuition is that if two vertices have distance $d$ in the graph, then the only way they can affect each other is that an interaction chain is forming between them, meaning that every vertex gets activated at least once in between them.

When we write $f(p) = \bigO{p^k}$ for some function $f$ then we mean the following: when $f(p)$ is written as a power-series in $p$, i.e., $f(p) = \sum_{i=0}^{\infty} \alpha_i p^i$, then $\alpha_i=0$ for $0\leq i \leq k-1$.

\begin{restatable}{lemma}{activationpower}\label{lemma:activationpower}
    Let $M_G$ be a parametrized local-update process on the graph $G=(V,E)$. Let $X\subseteq V$ be a subset of vertices and let $E$ be an event. If $E \subseteq \bigcap_{v \in X} \BA{v}$, then $\P(E) = \bigO{p^{|X|}}$.
\end{restatable}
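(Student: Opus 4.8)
The plan is to expand $\P(\mathcal{E})$ (I write $\mathcal{E}$ for the event; it is called $E$ in the statement, but I reserve $E$ for the edge set) as a sum over the possible runs of the process and to show that each run lying in $\mathcal{E}$ contributes a term of order at least $p^{|X|}$. First I would invoke Lemma~\ref{lemma:time} to switch to the discrete-time active-sampling dynamics, so that every run is described by a \emph{history} $\omega=(A_0,(v_1,R_1),\dots,(v_T,R_T))$: an initial active set $A_0$, and at each step $t\le T$ the selected active vertex $v_t\in A_{t-1}$ together with the resampled neighbourhood $R_t=A_t\cap\Gamma(v_t)$, the run ending when $A_T=\emptyset$. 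Its probability factors as
\begin{align*}
    \P(\omega) &= \Bigl(\prod_{v\in A_0} q_v(p)\prod_{v\notin A_0}(1-q_v(p))\Bigr)\\
    &\qquad\cdot\prod_{t=1}^{T}\frac{w_{v_t}}{\sum_{u\in A_{t-1}}w_u}\cdot\prod_{t=1}^{T} P_{R_t}(p),
\end{align*}
where $q_v$ is the initialization-active polynomial of $v$, and then $\P(\mathcal{E})=\sum_{\omega\in\mathcal{E}}\P(\omega)$.

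The heart of the argument would be the per-history estimate $\P(\omega)=\bigO{p^{N(\omega)}}$, where $N(\omega):=|A_0|+\sum_{t=1}^{T}\bigl|A_t\setminus A_{t-1}\bigr|$ counts how many times a vertex \emph{freshly} becomes active along $\omega$ (initialization included). The initialization factor is $\bigO{p^{|A_0|}}$ because every $q_v$ has zero constant term (property~\ref{prop:indepInit}); the selection factors are positive rationals independent of $p$; and for the update factors, property~\ref{prop:locUpdateRule} gives that a step activating no previously inactive vertex contributes a factor of order at least $0$, whereas a step that turns $j:=|A_t\setminus A_{t-1}|\ge 1$ previously inactive vertices active contributes $P_{R_t}$ of order at least $j$. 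Multiplying all factors yields $\P(\omega)=\bigO{p^{N(\omega)}}$. Finally, if $\omega\in\mathcal{E}\subseteq\bigcap_{v\in X}\BA{v}$ then every $v\in X$ is active at some moment of the run, hence $v$ lies in $A_0$ or in $A_t\setminus A_{t-1}$ for (at least) the step $t$ of its first activation; so each $v\in X$ is counted at least once in $N(\omega)$, giving $N(\omega)\ge|X|$ and $\P(\omega)=\bigO{p^{|X|}}$.

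To conclude, I need to pass from the per-history bound to $\P(\mathcal{E})=\bigO{p^{|X|}}$. Here I would cite Claim~\ref{claim:finitecontribution}, which (using the hypotheses on $P_R$ built into Definition~\ref{def:plup}) guarantees that for each fixed $k$ only finitely many histories $\omega$ have $\P(\omega)$ of order $\le k$ in $p$, equivalently that $\sum_\omega\P(\omega)$ converges absolutely on a neighbourhood of $p=0$. Therefore the $p^j$-coefficient of $\P(\mathcal{E})$ is $\sum_{\omega\in\mathcal{E}}[p^j]\P(\omega)$, and for $j<|X|$ every summand vanishes by the previous paragraph, so $\P(\mathcal{E})=\bigO{p^{|X|}}$.

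The step I expect to be the main obstacle is the update-factor bound: a single resampling may activate several currently inactive vertices at once, and one must ensure that activating $j$ of them simultaneously costs $p^j$ rather than merely $p^1$. Without this, $N(\omega)$ would have to be weakened to (number of fresh-activation steps)$+|A_0|$, which can be strictly smaller than $|X|$ when one update reaches several vertices of $X$ simultaneously; so the per-vertex nature of the ``fresh activation'' clause of property~\ref{prop:locUpdateRule} is exactly what carries the proof. A secondary technical point is the legitimacy of extracting power-series coefficients term by term from the a priori infinite sum over histories, which is precisely the role of Claim~\ref{claim:finitecontribution} and relies on the ``$A'=A\Rightarrow P_R=\bigO{p}$'' clause of Definition~\ref{def:plup}.
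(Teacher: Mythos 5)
Your proof is correct and follows essentially the same route as the paper's: a per-path bound of $\bigO{p^{|X|}}$ obtained by charging one power of $p$ to each fresh activation (initialization via property~\ref{prop:indepInit}, updates via property~\ref{prop:locUpdateRule}), followed by the regrouping of the infinite sum over paths, which is justified exactly as you propose by the absolute-convergence results of Appendix~\ref{apx:absconvergence} (Claim~\ref{claim:finitecontribution} and Corollary~\ref{cor:pathsevents}). The obstacle you flag --- that a single resampling which activates $j$ previously inactive vertices must cost $p^{j}$ rather than merely $p$ --- is a genuine subtlety: the paper's one-line proof silently uses the same per-vertex reading of property~\ref{prop:locUpdateRule} (``any subsequent activation is also $\bigO{p}$''), even though the condition as literally stated only forces a zero constant term, i.e.\ order $\geq 1$, whenever $|A'\setminus A|>0$; your per-vertex interpretation is the one needed for the lemma, and it does hold for DBS and CP where $P_R=\bigO{p^{|R\setminus A|}}$.
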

\begin{proof}
    When $E$ holds, all vertices in $X$ become active. By PLUP property~\ref{prop:indepInit} any activation in the initial state is $\bigO{p}$ and by property~\ref{prop:locUpdateRule} any subsequent activation is also $\bigO{p}$. Therefore, for any path $\xi$ of the Markov Chain with $\xi \in E$ we have $\P(\xi) = \bigO{p^{|X|}}$, where $\P(\xi)$ is a polynomial in $p$. We have $\P(E) = \sum_{\xi \in E} \P(\xi)$ by definition. This is a sum over infinitely many polynomials, and by considering $\P(E)$ as a power series in $p$ we are effectively regrouping terms in this sum. In Appendix~\ref{apx:absconvergence} we prove that these regroupings are allowed by proving the absolute convergence of certain series.
\end{proof}

\begin{lemma}[Graph surgery]\label{lemma:distancePower}
    Let $M_G$ be a parametrized local-update process on the graph $G=(V,E)$. If $X,Y\subseteq V$, $X\cap Y=\emptyset$ and $L_X$ is a local event on $X$, then
	$$\P_{G}(L_X)-\P_{G\setminus Y}(L_X)=\bigO{p^{d(X,Y)}}.$$
	\anote{Should be true with $+1$ in the degree, when $d(X,Y)>0$!}
\end{lemma}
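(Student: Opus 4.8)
The plan is to reduce the statement to the Splitting Lemma (Lemma~\ref{lemma:splitting}) together with the activation bound (Lemma~\ref{lemma:activationpower}). Let $d=d(X,Y)$ and assume $d\geq 1$ (if $d=0$ there is nothing to prove since $Y\neq\emptyset$ forces $X\cap Y=\emptyset$ with distance $0$ only if... actually $d=0$ cannot happen when $X\cap Y=\emptyset$, so $d\geq 1$ always). Set $S$ to be a ``separating shell'' between $X$ and $Y$: for instance take $Z=\Gamma(X,d-1)\setminus X$, the vertices at distance between $1$ and $d-1$ from $X$, so that $X$, $Z$, and the remainder $W:=V\setminus(X\cup Z)$ partition $V$, with $Y\subseteq W$ (every vertex of $Y$ is at distance $\geq d$ from $X$, hence not in $X\cup Z$), and crucially $X$ and $W$ are disconnected in $G\setminus Z$ (any $X$--$W$ path must pass through a distance-$(d-1)$-or-less vertex, i.e.\ through $Z$, unless it is an edge from $X$ directly to a distance-$1$ vertex; one must be slightly careful here and perhaps take $S=\overline{\partial}(X,d-1)$ only, or the full annulus — I'd take the full annulus $Z=\Gamma(X,d-1)\setminus X$ to be safe, which has the property that $G\setminus Z$ disconnects $X$ from everything at distance $\geq d$).

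The key step is then to condition on $\RI{Z}$. Write
\begin{align*}
    \P_G(L_X) = \P_G(L_X \cap \RI{Z}) + \P_G(L_X \cap \BA{Z}),
\end{align*}
and similarly for $G\setminus Y$. For the second term: $\BA{Z}$ means some vertex of $Z$ becomes active, and since every vertex of $Z$ is (by construction of the annulus, or by choosing a minimal separating path) reachable only through a chain of activations — more directly, on the event $\BA{Z}$ there is a vertex $z$ at distance $k\in\{1,\dots,d-1\}$ from $X$ that becomes active, and this forces activation along a path; but the cleanest route is: the event $L_X\cap\BA{Z}$ is not obviously small. Instead I would pick $S$ to be a single minimal vertex cut... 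Let me reconsider: the right move is to use that $\BA{\partial(X,k)}$ for the shell at distance exactly $k$ requires $k$ fresh activations (a path from $X$ out to distance $k$), no — activation of a single far vertex only needs a chain reaching it. The correct statement: if a vertex at distance $d$ from $X$ (somewhere in $Y$) is to influence an $X$-local event, the influence travels through the annulus. So I condition on $\RI{S}$ where $S=\overline\partial(X,d-1)\cup\cdots$; on $\RI{S}$ the Splitting Lemma gives $\P_G(L_X\mid\RI{S})=\P_{G\setminus W}(L_X\mid\RI{S})$ where $W\supseteq Y$, and this last quantity does not involve $Y$ at all, hence equals $\P_{G\setminus Y}(L_X\mid\RI{S'})$ for the appropriate restriction. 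On the complementary event $\BA{S}$, we bound $\P(\BA{S})=\bigO{p^{?}}$ — and here is where the distance enters: to have $\BA{S}$ while... no. I must reconsider more carefully; the genuinely small event is not $\BA{S}$ but the event that activation ``crosses'' the annulus.

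Here is the fix I would actually carry out. Let $Z=\Gamma(X,d-1)\setminus X$ and note $X\sqcup Z\sqcup(V\setminus(X\cup Z))=V$ with $Y$ inside the last block and with $X$ disconnected from $V\setminus(X\cup Z)$ in $G\setminus Z$. Apply the Splitting Lemma with $S=Z$, $X$ as is, $Y':=V\setminus(X\cup Z)$: this handles the $\RI{Z}$ part and shows $\P_G(\RI{Z}\cap L_X\mid\II{Z})$ factors, with the $X$-factor computed in $G\setminus Y'\subseteq$ a graph not containing $Y$, so it equals the corresponding quantity for $G\setminus Y$. Thus $\P_G(\RI{Z}\cap L_X)=\P_{G\setminus Y}(\RI{Z'}\cap L_X)$ after matching the initialization and noting $G\setminus Y$ restricted near $X$ coincides; more precisely both equal $\P_{G\cap\Gamma(X,d-1)}(\RI{Z}\cap L_X)$. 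For the remaining term $\P_G(L_X\cap\BA{Z})-\P_{G\setminus Y}(L_X\cap\BA{Z})$, observe $\BA{Z}$ alone is $\bigO{p}$, not enough. The honest resolution: partition $\BA{Z}$ by which ``layer'' $\overline\partial(X,k)$ first becomes active; by Lemma~\ref{lemma:activationpower} applied to a shortest $X$-to-that-layer chain, activating a vertex at distance $k$ forces $\bigcap_{v\in P}\BA{v}$ for a path $P$ of length $k$, giving $\bigO{p^{k}}$ — still only $\bigO{p}$ for $k=1$. So even this does not directly work, which tells me the intended proof must be an induction on $d$, peeling one layer at a time: $\P_G(L_X)-\P_{G\setminus\overline\partial(X\cup\cdots)}$ costs one power of $p$ per layer because $L_X$ local on $X$ plus ``vertex at distance $1$ differs'' is $\bigO{p}$ (a single far vertex only affects $X$-events when it gets activated, $\bigO{p}$), then iterate $d$ times.

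So the structure I would write: (i) Base case / single-vertex step: if $Y=\{y\}$ with $d(X,y)\geq 1$, then $\P_G(L_X)-\P_{G\setminus\{y\}}(L_X)=\bigO{p}$, proved by conditioning on $\RI{\{y\}}$ and using the Splitting Lemma (with $S=\{y\}$ if $y$ separates, else with $S$ a cut, or by noting that on $\RI{\{y\}}$ the processes couple) plus $\P(\BA{\{y\}})=\bigO{p}$ from Lemma~\ref{lemma:activationpower}. (ii) Inductive step on $d=d(X,Y)$: remove the outermost layer $\overline\partial(\Gamma(X,d-1))$'s worth of $Y$-vertices... actually remove $Y$ one far vertex at a time while tracking that each removal of a vertex at distance $\geq d$ from $X$ costs $\bigO{p^d}$ by combining the single-vertex step applied ``at distance $d$'': conditioning on $\RI{\overline\partial(X,1)\cup\dots\cup\overline\partial(X,d-1)}$ (a full separating annulus $S$), the Splitting Lemma removes $y$ for free on that event, and on $\BA{S}$ — now using that $\BA{S}$ with $S$ the annulus does NOT suffice but $\BA{\overline\partial(X,k)}$ occurring means a length-$k$ activation chain from $X$... hmm, the clean bound is rather: the symmetric difference of the two coupled processes is contained in the event that activation reaches layer $d$, i.e.\ $\bigcap_{k=1}^{d}\BA{\overline\partial(X,k)}$ restricted along a path, which by Lemma~\ref{lemma:activationpower} is $\bigO{p^{d}}$. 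The main obstacle, and the thing I'd spend the most care on, is precisely this last point: making rigorous that the only discrepancy between the $G$-process and the $G\setminus Y$-process, under the coupling of Lemma~\ref{lemma:splitting}'s proof, is confined to paths on which an entire length-$d(X,Y)$ chain of vertices each becomes active, so that Lemma~\ref{lemma:activationpower} delivers the $\bigO{p^{d(X,Y)}}$ bound — and dealing with the absolute-convergence/regrouping subtlety (as in the proof of Lemma~\ref{lemma:activationpower}) when summing over the infinitely many Markov-chain paths. The off-by-one remark of the author (\anote{...}) suggests that with more care the exponent is $d(X,Y)$ when one does not need the first vertex of $Y$ to be ``freshly'' activated but can use that it starts inactive, giving one extra power; I would note this but prove the clean $\bigO{p^{d(X,Y)}}$ bound as stated.
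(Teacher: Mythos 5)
There is a genuine gap: you correctly assemble the ingredients (induction on $d(X,Y)$, the Splitting Lemma, Lemma~\ref{lemma:activationpower}, a layer decomposition around $X$), and you correctly diagnose why your first two attempts fail, but the final sketch does not close either. Your proposed iteration --- ``each removal of a layer costs $\bigO{p}$, then iterate $d$ times'' --- cannot give $\bigO{p^{d}}$: errors from successive approximations \emph{add}, they do not multiply, so this scheme yields only $\bigO{p}$ no matter how many layers you peel. Your fallback, that the coupled $G$- and $(G\setminus Y)$-processes differ only on an event forcing a length-$d$ activation chain, is exactly the intuition but you leave it unproven and explicitly flag it as the main obstacle; as stated it is not a proof.

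What you are missing is the mechanism that makes the powers of $p$ \emph{multiply} rather than add. The paper first reduces to the case $L_X\subseteq\BA{X}$ (by complementing $L_X$ if necessary, using that $\RI{X}$ is an atom of the local sigma algebra), then partitions $L_X=\dot\bigcup_{i\in[d+1]}L_X^{i}$ according to the \emph{first fully inactive layer} $\overline{\partial}(X,i)$, with all layers $j<i$ containing an activated vertex. For each $i\le d$ the Splitting Lemma with $S=\overline{\partial}(X,i)$ factors $\P_G(L_X^{i})$ into an inner term $\P_{\Gamma(X,i)}(L_X^{i}\mid\II{\overline{\partial}(X,i)})$, which is $\bigO{p^{i}}$ by Lemma~\ref{lemma:activationpower} (one activation in $X$ plus one in each of the $i-1$ inner layers), times an outer term $\P_{G\setminus\Gamma(X,i-1)}(\RI{\overline{\partial}(X,i)})$, to which the \emph{induction hypothesis} applies with error $\bigO{p^{d+1-i}}$ because $\overline{\partial}(X,i)$ is at distance $d+1-i$ from $Y$. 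The product of the two factors then carries error $\bigO{p^{i}}\cdot\bigO{p^{d+1-i}}=\bigO{p^{d+1}}$ uniformly in $i$, and the leftover piece $L_X^{d+1}$ is itself $\bigO{p^{d+1}}$. This ``complementary exponents'' step --- induction applied to the boundary event living far from $Y$, multiplied against the small probability of the inner event --- is the heart of the argument and is absent from your proposal; without it none of your three sketched routes produces more than a single power of $p$.
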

\begin{proof}
	We can assume without loss of generality, that $X\neq \emptyset\neq Y$, otherwise the statement is trivial. Also we can assume without loss of generality that $d(X,Y)\leq \infty$, i.e., $X,Y$ are in the same connected component of $G$, otherwise we can use Lemma~\ref{lemma:splitting} with $S=\emptyset$.
	
    The proof goes by induction on $d(X,Y)$. For the base case, $d(X,Y)=1$, first note that when $p=0$, the process initializes everything to inactive by property~\ref{prop:indepInit}. Depending on whether this atomic event is included in $L_X$, the probability $\P(L_X)$ for $p=0$ (i.e. the constant term) is either 0 or 1 and independent of the graph.

	Now we show the inductive step, assuming we know the statement for $d$, and that $d(X,Y)=d+1$.
	First we assume, that $\RI{X}\subseteq\overline{L_X}$, i.e., $L_X\subseteq \BA{X}$.
    Define
    \begin{align*}
        L_X^i     &:= L_X \cap \RI{\overline{\partial}(X,i)} \cap \bigcap_{j\in[i-1]} \BA{\overline{\partial}(X,j)} \qquad\text{for}\quad i \in [d],\\
        L_X^{d+1} &:= L_X \cap \bigcap_{j\in [d]} \BA{\overline{\partial}(X,j)} .
    \end{align*}
    When $L_X^i$ holds, all vertices at distance $i$ remain inactive, but for all $j \leq i-1$ there exists a vertex at distance $j$ that become active.
    These events form a partition $L_X=\dot\bigcup_{i\in [d+1]}L_X^{i}$. Below we depict $L_X^{i}$ graphically:
    \begin{center}
        \includegraphics[draft=false]{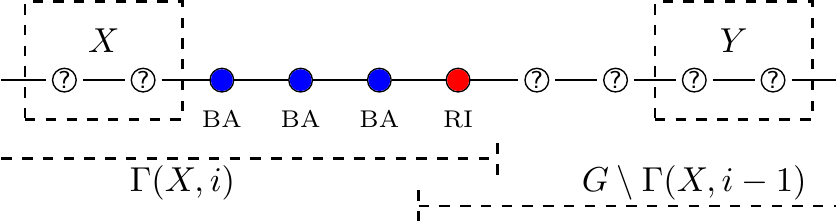}
    \end{center}
    It is easy to see that for all $i\in[d+1]$ we have $L_X^{i}\subseteq\BA{X}\cap\bigcap_{j\in[i-1]}\BA{\overline{\partial}(X,j)}$, and therefore by Lemma~\ref{lemma:activationpower} we get
	\begin{equation}\label{eq:AXorder}
        \P_G(L_X^{i} \mid \II{\overline{\partial}(X,i)} )=\bigO{p^{i}}.
	\end{equation}
    Now we use, for all $i \in [d]$, the Splitting lemma~\ref{lemma:splitting} with $S=\overline{\partial}(X,i)$ to split $\Gamma(X,i-1)$ from $G\setminus \Gamma(X,i)$. We get
	\begin{align}
	\P_G(L_X^{i})
	&=\P_{\Gamma(X,i)}(L_X^{i} \mid \II{\overline{\partial}(X,i)})\cdot \P_{G\setminus \Gamma(X,i-1)}(\RI{\overline{\partial}(X,i)}) \tag{by Lemma~\ref{lemma:splitting}}\\
	&=\P_{\Gamma(X,i)}(L_X^{i} \mid \II{\overline{\partial}(X,i)})\cdot \left(\P_{G\setminus Y\setminus \Gamma(X,i-1)}(\RI{\overline{\partial}(X,i)})+\bigO{p^{d+1-i}}\right) \tag{by induction}\\
	&=\P_{\Gamma(X,i)}(L_X^{i} \mid \II{\overline{\partial}(X,i)})\cdot \P_{G\setminus Y\setminus \Gamma(X,i-1)}(\RI{\overline{\partial}(X,i)})+\bigO{p^{d+1}} \tag{by equation \eqref{eq:AXorder}}\\
	&=\P_{G\setminus Y}(L_X^{i})+\bigO{p^{d+1}} \tag{by Lemma~\ref{lemma:splitting}}\\
	&=\P_{G\setminus Y}(L_X^{i})+\bigO{p^{d(X,Y)}}. \label{eq:indStep}
	\end{align}
	Therefore 
    \begin{align*}
	    \P_G(L_X)
        &\overset{\eqref{eq:AXorder}}{=}\sum_{i\in[d]}\P_G(L_X^{i})+\bigO{p^{d(X,Y)}}
	    \overset{\eqref{eq:indStep}}{=}\sum_{i\in[d]}\P_{G\setminus Y}(L_X^{i})+\bigO{p^{d(X,Y)}} \\
        &\overset{\eqref{eq:AXorder}}{=}\P_{G\setminus Y}(L_X)+\bigO{p^{d(X,Y)}}.
    \end{align*}
    We finish the proof by observing that by $\RI{X}$ is an atomic event of the sigma algebra of the local events of $X$, so if $\RI{X}\nsubseteq\overline{L_X}$,
	then we necessarily have $\RI{X}\subseteq L_X$. Therefore we can use the above proof with $C_X:=\overline{L_X}$ and use that $\P(L_X)=1-\P(C_X)$.
\end{proof}

\begin{restatable}[Decay of correlations]{corollary}{decay} \label{cor:decay}
	Let $M_G$ be a parametrized local-update process on the graph $G=(V,E)$. If $X,Y\subseteq V$ and $L_X, L_Y$ are local events on $X$ and $Y$ respectively, then
	\begin{equation}\label{eq:LCovariance}
		\mathrm{Cov}(L_X,L_Y)=\P_{G}(L_X\cap L_Y)-\P_{G}(L_X)\P_{G}(L_Y)=\bigO{p^{d(X,Y)-1}},
	\end{equation}
	and
	\begin{equation}\label{eq:BACovariance}
		\P_{G}(\BA{X}\cap \BA{Y})-\P_{G}(\BA{X})\P_{G}(\BA{Y})=\bigO{p^{d(X,Y)+1}}.
	\end{equation}
\end{restatable}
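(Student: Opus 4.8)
The plan is to prove, by induction on $d:=d(X,Y)$ and for every graph, the two statements: (a)~$\mathrm{Cov}_G(L_X,L_Y)=\bigO{p^{d-1}}$ for arbitrary local events; and (b)~$\mathrm{Cov}_G(L_X,L_Y)=\bigO{p^{d+1}}$ whenever moreover $L_X\subseteq\BA{X}$ and $L_Y\subseteq\BA{Y}$. Then \eqref{eq:LCovariance} is exactly~(a); and since $\BA{X}$ is the complement of $\RI{X}$ we have $\mathrm{Cov}(\BA{X},\BA{Y})=\mathrm{Cov}(\RI{X},\RI{Y})$, so \eqref{eq:BACovariance} is the instance $L_X=\BA{X}$, $L_Y=\BA{Y}$ of~(b). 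For (a) we may in addition assume $L_X\subseteq\BA{X}$: the event $\RI{X}$ is a single atom of the $\sigma$-algebra of $X$-local events (on $\RI{X}$ all of $\RI{v},\BA{v},\Res{v},\Tog{v}$ for $v\in X$ are determined), so $L_X$ either contains $\RI{X}$ or avoids it, and replacing $L_X$ by its complement only flips the sign of the covariance.

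Fix $d\geq1$ and use the separating shells $S_i:=\overline{\partial}(X,i)$, $i\in[d-1]$; each $S_i$ separates $\Gamma(X,i-1)\supseteq X$ from $V\setminus\Gamma(X,i)\supseteq Y$ (the latter because every vertex of $Y$ is at distance $\geq d>i$ from $X$). Put $E_i:=\RI{S_i}\cap\bigcap_{j\in[i-1]}\BA{S_j}$ for $i\in[d-1]$ and $E_d:=\bigcap_{j\in[d-1]}\BA{S_j}$, so that $E_1,\dots,E_d$ partition the sample space. Since $L_X\cap\bigcap_{j\in[i-1]}\BA{S_j}$ is local on $\Gamma(X,i-1)$, the Splitting Lemma~\ref{lemma:splitting} applied with the partition $\bigl(\Gamma(X,i-1),\,S_i,\,V\setminus\Gamma(X,i)\bigr)$ gives, for $i\in[d-1]$,
\begin{align*}
\P_G(L_X\cap L_Y\cap E_i)&=\P_{G\cap\Gamma(X,i)}\!\Bigl(L_X\cap\textstyle\bigcap_{j\in[i-1]}\BA{S_j}\cap\RI{S_i}\Bigr)\cdot\P_{G\setminus\Gamma(X,i-1)}\!\bigl(\RI{S_i}\cap L_Y\mid\II{S_i}\bigr),\\
\P_G(L_X\cap E_i)&=\P_{G\cap\Gamma(X,i)}\!\Bigl(L_X\cap\textstyle\bigcap_{j\in[i-1]}\BA{S_j}\cap\RI{S_i}\Bigr)\cdot\P_{G\setminus\Gamma(X,i-1)}\!\bigl(\RI{S_i}\mid\II{S_i}\bigr),
\end{align*}
hence $\P_G(L_X\cap L_Y\cap E_i)=\P_G(L_X\cap E_i)\cdot\pi_i$ with $\pi_i:=\P_{G\setminus\Gamma(X,i-1)}(L_Y\mid\RI{S_i})$. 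The leftover pieces are negligible: by Lemma~\ref{lemma:activationpower}, $\P_G(L_X\cap L_Y\cap E_d)\leq\P_G(\BA{X}\cap\bigcap_{j\in[d-1]}\BA{S_j})=\bigO{p^{d}}$ (and $=\bigO{p^{d+1}}$ in case~(b), also including $\BA{Y}$), while $\P_G(L_Y)\,\P_G(L_X\cap E_d)=\bigO{p^{d}}$ (resp. $\bigO{p^{d+1}}$). Summing over $i$ and subtracting $\P_G(L_X)\P_G(L_Y)$, the terms reorganize into
\begin{align*}
\mathrm{Cov}_G(L_X,L_Y)=\sum_{i\in[d-1]}\P_G(L_X\cap E_i)\,\bigl(\pi_i-\P_G(L_Y)\bigr)+\bigO{p^{d-1}}\qquad\bigl(\text{resp. }\bigO{p^{d+1}}\text{ in case (b)}\bigr).
\end{align*}

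It remains to bound each summand. Since $X,S_1,\dots,S_{i-1}$ are pairwise disjoint, Lemma~\ref{lemma:activationpower} gives $\P_G(L_X\cap E_i)\leq\P_G(\BA{X}\cap\bigcap_{j\in[i-1]}\BA{S_j})=\bigO{p^{i}}$ (here $L_X\subseteq\BA{X}$ is used). Writing $\pi_i-\P_G(L_Y)=\Delta^{\mathrm{gr}}_i+\Delta^{\mathrm{cond}}_i$ with $\Delta^{\mathrm{gr}}_i:=\P_{G\setminus\Gamma(X,i-1)}(L_Y)-\P_G(L_Y)$ and $\Delta^{\mathrm{cond}}_i:=\P_{G\setminus\Gamma(X,i-1)}(L_Y\mid\RI{S_i})-\P_{G\setminus\Gamma(X,i-1)}(L_Y)$, Lemma~\ref{lemma:distancePower} gives $\Delta^{\mathrm{gr}}_i=\bigO{p^{d(Y,\Gamma(X,i-1))}}=\bigO{p^{d-i+1}}$, and $\Delta^{\mathrm{cond}}_i=\mathrm{Cov}_{G\setminus\Gamma(X,i-1)}(L_Y,\RI{S_i})\big/\P_{G\setminus\Gamma(X,i-1)}(\RI{S_i})$ with denominator $1-\bigO{p}$. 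The key point is the identity $d_{G\setminus\Gamma(X,i-1)}(Y,S_i)=d-i$: one bound is the reverse triangle inequality, the other follows by taking the sub-path of an $X$–$Y$ geodesic starting at its $i$-th vertex, which lies in $S_i$ and, being at distances $i,i+1,\dots$ from $X$, avoids $\Gamma(X,i-1)$. Since $d-i<d$, the induction hypothesis applies to the pair $(Y,S_i)$: in case~(a) it bounds $\mathrm{Cov}_{G\setminus\Gamma(X,i-1)}(L_Y,\RI{S_i})$ by $\bigO{p^{d-i-1}}$, and in case~(b) — writing $\RI{S_i}$ as the complement of $\BA{S_i}$ and using $L_Y\subseteq\BA{Y}$ — by $\bigO{p^{d-i+1}}$. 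Thus $\pi_i-\P_G(L_Y)$ is $\bigO{p^{d-i-1}}$ (case~(a)) or $\bigO{p^{d-i+1}}$ (case~(b)), and multiplying by $\P_G(L_X\cap E_i)=\bigO{p^{i}}$ makes each summand $\bigO{p^{d-1}}$, respectively $\bigO{p^{d+1}}$. The base case $d=1$ is immediate: there are no shells, and in case~(b) $\P_G(L_X\cap L_Y)\leq\P_G(\BA{X}\cap\BA{Y})=\bigO{p^{2}}=\P_G(L_X)\P_G(L_Y)+\bigO{p^{2}}$.

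The step I expect to need the most care is making the power count watertight: proving the identity $d_{G\setminus\Gamma(X,i-1)}(Y,S_i)=d-i$ (without which the induction is not even well-founded), checking that the power $p^{i}$ lost to the $i$ fresh activations forced inside $L_X\cap E_i$ is recovered exactly by the conditioning term, and justifying the power-series manipulations — in particular inverting $\P(\RI{S_i})=1-\bigO{p}$ and regrouping the infinite sums over Markov-chain paths, where the absolute-convergence estimates behind Lemma~\ref{lemma:activationpower} are used.
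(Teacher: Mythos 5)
Your proof is correct, and it reaches both bounds by a genuinely different decomposition than the paper's. The paper partitions \emph{both} events into shell events $L_X^i$, $L_Y^j$ around $X$ and around $Y$; for each pair $(i,j)$ with $i+j\le d$ it applies the splitting lemma twice to peel off $\Gamma(X,i-1)$ and $\Gamma(Y,j-1)$, reducing the problem to the covariance of the two pure shell events $\RI{\overline{\partial}(X,i)}$ and $\RI{\overline{\partial}(Y,j)}$ on the remaining graph at distance at least $d-i-j$. Everything there stays a product of probabilities, so no power-series division is needed, and the induction hypothesis is only ever invoked for remain-inactive events of shells. You instead use shells around $X$ only, factor $\P_G(L_X\cap L_Y\cap E_i)$ as $\P_G(L_X\cap E_i)\cdot\P_{G\setminus\Gamma(X,i-1)}(L_Y\mid\RI{S_i})$, and control the conditional probability by graph surgery plus the inductive covariance bound for the pair $(Y,S_i)$ at distance exactly $d-i$. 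This buys a single sum over $i$ instead of a double sum over $(i,j)$ and a transparent account of where each power of $p$ comes from, at the price of (i) inverting the unit power series $\P(\RI{S_i}\mid \II{S_i})=1-\bigO{p}$, which is harmless but must be justified alongside the regrouping of path sums (the paper's absolute-convergence appendix covers this); (ii) the exact identity $d_{G\setminus\Gamma(X,i-1)}(Y,S_i)=d-i$, which you correctly establish via a geodesic sub-path and which is genuinely needed so that the induction descends; and (iii) a slightly strengthened inductive statement — your case (b) for arbitrary local sub-events of $\BA{X}$ and $\BA{Y}$ rather than for those events themselves — which your argument does carry through. The only omissions are the trivial boundary cases $d(X,Y)\in\{0,\infty\}$, which the paper dispatches in two lines (independence via the splitting lemma with $S=\emptyset$, and the observation that both terms of \eqref{eq:BACovariance} are $\bigO{p}$ when $X\cap Y\neq\emptyset$); you should add these for completeness.
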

The proof of this lemma is analogous to the proof of Lemma~\ref{lemma:distancePower} and can be found in Appendix~\ref{apx:correlations}.

In order to state our general result about the stabilization of the coefficients in the power series we define a notion of isomorphism between different PLUPs. 
\begin{definition}[PLUP isomorphism]
	We say that the PLUPs $M_G$ and $M_{G'}$ are isomorphic with the fixed sets $X,X'$ if there is a graph isomorphism $i: G\rightarrow G'$ such that $i(X)=X'$, moreover the probability of transitioning in one step from a state $A$ to $A'$ is preserved under the isomorphism:
	$$
		\P_G(A \text{ is transformed to }A')=\P_{G'}(i(A) \text{ is transformed to }i(A')),
	$$ 
	and similarly the probability of initialising to a particular state $A$ is preserved:
	$$
	\P_G(\text{graph state is initially }A)=\P_{G'}(\text{graph state is initially }i(A')).
	$$ 	
	We denote such an isomorphism relation by
	$$
		M_{G}
		\underset{X'}{\overset{X}{\simeq}}
		M_{G'}.
	$$
\end{definition}

Now we define convergent families of PLUPs. Our requirements for such a family of processes imply that the underlying graphs converge to a common graph limit, also called graphing, therefore justifying the term ``convergent''.
Examples of convergent families of PLUPs include DBS and CP on toruses of any dimension, when the limit graphing is just the infinite grid. Less regular examples are also included, such as toroid ladder graphs or discrete Möbius strips of fixed width. 

\begin{definition}[Convergent family of PLUPs]
    We say that family $\{ (M_{G_j},v_j) \colon j\in \mathbb{N}\}$ of rooted PLUPs is convergent, if for all $d\in \mathbb{N}$ and for all $j,k \geq d$ we have $M_{\Gamma_{G_j}\left(\{v_j\},d\right)}	\underset{v_k}{\overset{v_j}{\simeq}} M_{\Gamma_{G_k}\left(\{v_k\},d\right)}$.
\end{definition}

We are ready to state our generic result about the stabilization of coefficients phenomena.

\begin{theorem}[Power series stabilization] \label{thm:stabilization}
    Suppose that $\{(M_{G_j},v_j) \colon j\in \mathbb{N}\}$ is a convergent family of rooted PLUPs, then the coefficients of the power series of $R_{G_i}=\E_{G_i}(\Res{v_i})$ stabilize. In particular, $R_{G_i}(p) = R_{G_j}(p) + \bigO{p^{\min(i,j)+1}}$
\end{theorem}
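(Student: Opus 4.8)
The plan is to express the quantity $R_{G_i}(p)=\E_{G_i}(\Res{v_i})$ as a power series whose coefficients can be read off from a growing sequence of local events, and then invoke Lemma~\ref{lemma:distancePower} to conclude that only the $d$-neighbourhood of $v_i$ matters up to order $p^d$. First I would rewrite the expectation as a tail sum, $\E_{G_i}(\Res{v_i}) = \sum_{r\geq 1}\P_{G_i}(\Res{v_i}\geq r)$, each summand being a local event on the singleton $\{v_i\}$. (One must first check this sum makes sense as a power series; this follows from the same absolute-convergence bookkeeping used for Lemma~\ref{lemma:activationpower}, referred to in Appendix~\ref{apx:absconvergence}, together with the observation that $\P(\Res{v_i}\geq r)=\bigO{p^{r}}$ roughly speaking, so only finitely many terms contribute to each coefficient.) The upshot is that each coefficient of $R_{G_i}$ in its $p$-expansion is a finite $\mathbb{Z}$-linear combination of coefficients of probabilities of $\{v_i\}$-local events.

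Next I would fix a target order $d := \min(i,j)+1$ and compare $R_{G_i}$ with $R_{G_j}$ coefficient by coefficient up to order $p^{d-1}$. Since $\{v_i\}$ is a singleton, its distance to the complement of its $(d-1)$-neighbourhood $\Gamma_{G_i}(\{v_i\},d-1)$ is $d$, so Lemma~\ref{lemma:distancePower} (graph surgery) gives
\begin{align*}
    \P_{G_i}(L_{\{v_i\}}) = \P_{\Gamma_{G_i}(\{v_i\},d-1)}(L_{\{v_i\}}) + \bigO{p^{d}}
\end{align*}
for every $\{v_i\}$-local event $L_{\{v_i\}}$, and likewise for $G_j$. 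Because the family is convergent and $i,j\geq d-1$, we have the PLUP isomorphism $M_{\Gamma_{G_i}(\{v_i\},d-1)}\underset{v_j}{\overset{v_i}{\simeq}}M_{\Gamma_{G_j}(\{v_j\},d-1)}$, which preserves all one-step transition and initialization probabilities, hence preserves the probability of any local event (the event $L_{\{v_i\}}$ maps to the corresponding event $L_{\{v_j\}}$ under the isomorphism, and in particular $\Res{v_i}\mapsto\Res{v_j}$). Therefore $\P_{\Gamma_{G_i}(\{v_i\},d-1)}(\Res{v_i}\geq r) = \P_{\Gamma_{G_j}(\{v_j\},d-1)}(\Res{v_j}\geq r)$ exactly, and chaining the three equalities yields $\P_{G_i}(\Res{v_i}\geq r) = \P_{G_j}(\Res{v_j}\geq r) + \bigO{p^{d}}$ for each $r$.

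Summing over $r$ and using that the coefficient of $p^k$ in $R_{G_i}$ (for $k\leq d-1$) only picks up contributions from finitely many $r$ — those with $r\leq k$, say, by the activation-power bound — I can interchange the sum over $r$ with the extraction of coefficients and conclude $R_{G_i}(p) = R_{G_j}(p) + \bigO{p^{d}} = R_{G_j}(p)+\bigO{p^{\min(i,j)+1}}$, which is exactly the claim. Writing $R(p)$ for the formal power series whose degree-$k$ coefficient is the common value for all $i\geq k$, the coefficients thus stabilize in the stated sense.

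\textbf{Main obstacle.} The genuine work is not the light-cone comparison — that is a direct application of the already-proven Lemma~\ref{lemma:distancePower} plus the definition of convergence — but the analytic bookkeeping needed to treat $\E(\Res{v})=\sum_r \P(\Res{v}\geq r)$ as an honest power series and to justify exchanging the infinite sum over $r$ with the truncation to a fixed order in $p$. One needs the fact that $\P(\Res{v}\geq r)$ contributes nothing below order $p^{r}$ (each of the $r$ selections-while-active of $v$ forces $v$ to have been re-activated, costing a power of $p$, except possibly the initial one — here PLUP property~\ref{prop:indepInit}, zero constant term for initial activation, is what makes even $r=1$ cost $\bigO{p}$), so that each coefficient of $R_{G_i}$ is a finite sum; combined with the absolute-convergence results of Appendix~\ref{apx:absconvergence} this legitimizes all the regroupings. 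Everything else is formal.
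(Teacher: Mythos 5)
Your proposal is correct and follows essentially the same route as the paper: write the expectation as a sum of probabilities of $\{v_i\}$-local events, apply Lemma~\ref{lemma:distancePower} twice to restrict to the $\min(i,j)$-neighbourhood on each side, use the PLUP isomorphism from the convergence of the family to identify the two restricted probabilities, and defer the interchange of the infinite sum with coefficient extraction to the absolute-convergence results of Appendix~\ref{apx:absconvergence}. The only (immaterial) differences are that you use the tail-sum $\sum_r \P(\Res{v}\geq r)$ where the paper uses $\sum_k k\,\P(\Res{v}= k)$, and your heuristic $\P(\Res{v}\geq r)=\bigO{p^{r}}$ only holds as $\bigO{p^{cr}}$ for a general PLUP (cf.\ Claim~\ref{claim:finitecontribution}), which you appropriately hedge and which does not affect the finiteness of contributions to each coefficient.
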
	
Note that for translation invariant graphs, this implies $R_{G_i}=\frac{1}{|G_i|} \E_{G_i}(\text{total steps})$ stabilizes.
\begin{proof}
    Let $d = \min(i,j)$, then
	\begin{align*}
        \E_{G_i}(\Res{v_i})
        &= \sum_{k\geq 0} k \cdot \P_{G_i}(\Res{v_i} = k)\\
        &= \sum_{k\geq 0} k \cdot \P_{G_i\cap \Gamma_{G_i}\left(v_i,d\right)}\(\Res{v_i} = k\) + \bigO{p^{d+1}} \tag{by Lemma~\ref{lemma:distancePower}}\\
        &= \sum_{k\geq 0} k \cdot \P_{G_j\cap \Gamma_{G_j}\left(v_j,d\right)}\(\Res{v_j} = k\) + \bigO{p^{d+1}} \tag*{$\left(M_{G_i \cap\Gamma_{G_i}\left(v_i,d\right)}
		\underset{v_j}{\overset{v_i}{\simeq}}
		M_{G_j\cap\Gamma_{G_j}\left(v_j,d\right)}\right)$}\\	
        &= \sum_{k\geq 0} k \cdot \P_{G_j}\(\Res{v_j} = k\) + \bigO{p^{d+1}}  \tag{by Lemma~\ref{lemma:distancePower}}.
	\end{align*} 
    In Claim~\ref{claim:absconvergence} in Appendix~\ref{apx:absconvergence}, we prove that these types of sums are absolutely convergent for small enough $p$. Therefore the equality holds when the left- and right-hand side are considered as a power series in $p$.
\end{proof}

\section{The discrete Bak-Sneppen process} \label{sec:numerics}

In Section~\ref{sec:stabilization} we introduced two quantities that exhibit a phase transition in the DBS process. We saw that the coefficients of their power series stabilize. In this section we will look at them in more detail.

\subsection{Notation}
We denote by $M_{G}$ the DBS process on the graph $G=(V,E)$. With a slight abuse of notation we also denote by $M_{G}$ the leaking transition matrix of this time-independent Markov Chain, where the row and column that correspond to the all-inactive configuration is set to zero. 
We will index vectors (and matrices) by sets $A\subseteq V$, where $A$ is the set of active vertices, as in Section~\ref{sec:plup}.
We will denote probability row vectors by $\rho \in \R^{2^n}$ so that $\rho \cdot M_G$ is the state of the system after one time step. Setting the all-inactive row and column to zero corresponds to the property that for every $A\subseteq V$ we have $(M_G)_{\emptyset,A} = (M_G)_{A,\emptyset} = 0$.
We will use notation $M_{(n)}$ for the process on the cycle of length $n$ and $M_{[n]}$ for the process on the chain (not periodic) of length $n$. In both case we identify vertices with  $V:=[n]=\{1,2,...,n\}$.

\subsection{Expected number of resamples per site} \label{sec:Rn}
The first quantity of interest is the expected number of steps per vertex to reach the all-inactive state. Consider the DBS process on the cycle of length $n$.
We start the process by letting each vertex be active with probability $p$ and inactive with probability $1-p$, independently for each vertex. Denote this initial state by $\rho^{(0)}$, so $\rho^{(0)}_A = p^{|A|}(1-p)^{n-|A|}$. Let $J$ be a vector with all entries equal to 1, except for the entry of the all-inactive state which is zero. Then $\rho^{(0)} \cdot M_{(n)}^k \cdot J^T$ is the probability that the all-inactive state has \emph{not} been reached in $k$ steps, starting from $\rho^{(0)}$.
Now define $R_{(n)}(p)$ as the expected number of steps per vertex, before reaching the all-inactive state:
\begin{align}
    R_{(n)}(p)  &= \frac{1}{n} \sum_{k=1}^{\infty} \P(\text{reach end in $k$ steps or more})  \nonumber\\
    &= \frac{1}{n} \sum_{k=1}^{\infty} \rho^{(0)} \cdot M_{(n)}^{k-1} \cdot J^T \label{eq:expectationsum}\\
    &= \frac{1}{n} \rho^{(0)} \cdot (\id - M_{(n)})^{-1} \cdot J^T \tag{by the geometric series} \nonumber\\     
    &= \frac{P_{(n)}(p)}{P'_{(n)}(p)}, \label{eq:rational}
\end{align}
where $P_{(n)},P'_{(n)}$ are polynomials as can be seen by using Cramer's rule for matrix inversion. Therefore we can conclude that $R_{(n)}(p)$ is a rational function.
For small $n$ we can compute the functions $R_{(n)}(p)$ by symbolic matrix inversion, which is how we obtained the coefficients in Table~\ref{tab:coeffs}. For $n\geq 9$ we computed the matrix inversion for rational values of $p$ exactly, and then computed the rational function using Thiele's interpolation formula.

\subsubsection{The power-series of $R_{(n)}(p)$}

\begin{figure}[h] 
	\begin{center}
		\includegraphics[draft=false,width=\textwidth]{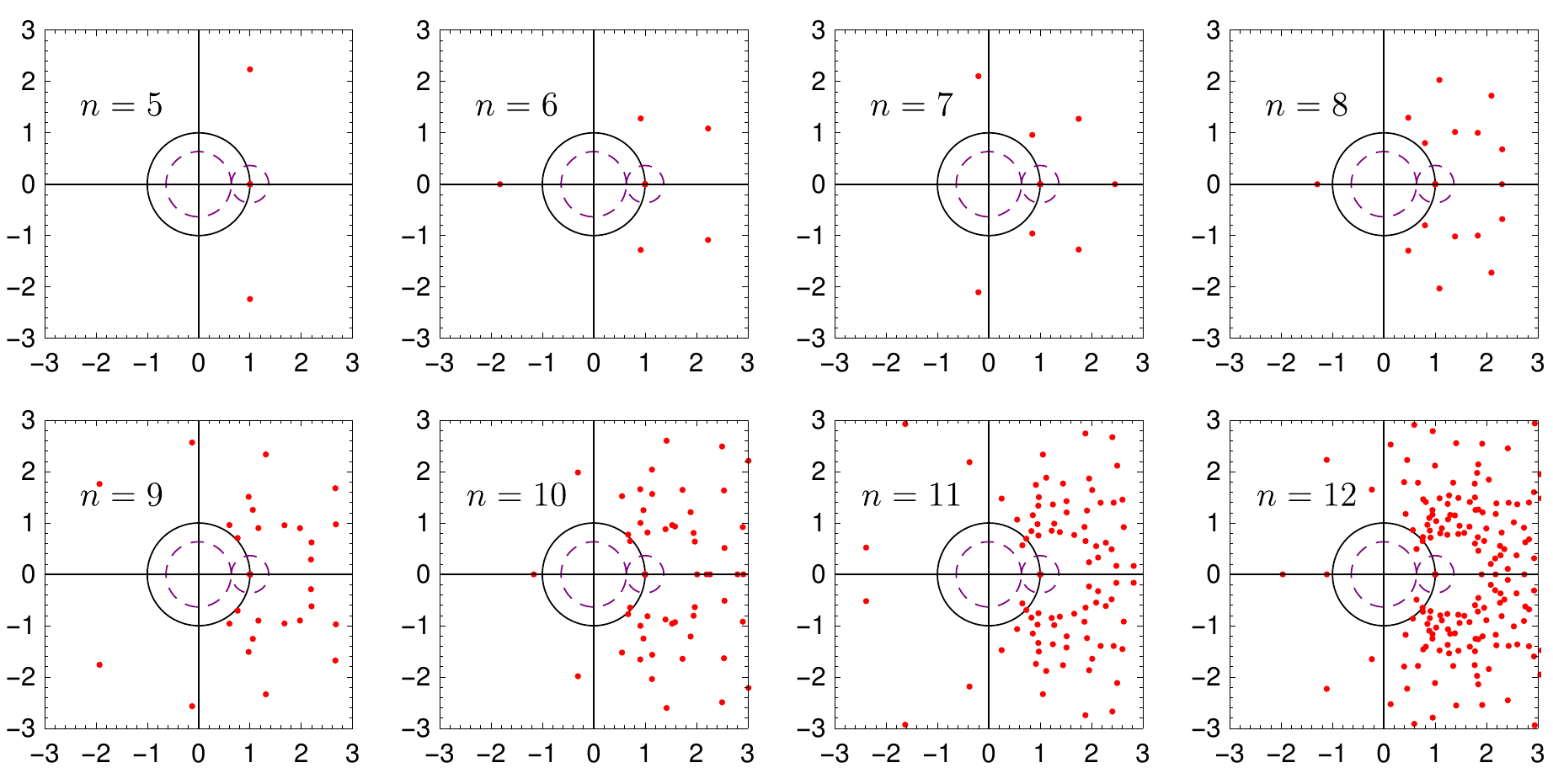}
   		\caption{\label{fig:plotpoles} Location of the poles of $R_{(n)}(p)$ in the complex plane for different $n$. The black circle is the complex unit circle and the dashed circles have radius $p_c$ around $p=0$ and $1-p_c$ around $p=1$. There is always a pole at $p=1$ because $R_{(n)}(1)$ is always infinite.}
	\end{center}
\end{figure}
As we have seen in the previous subsection, $R_{(n)}(p)$ is a rational function. Since a rational function is analytic, and $R_{(n)}(p)$ has no pole at $p=0$ (it actually takes value~$0$), we can write it as
\begin{equation}
R_{(n)}(p) = \sum_{k=0}^{\infty} a^{(n)}_k p^k, \label{eq:expectationseries}
\end{equation}
where the (non-zero) radius of convergence of the above power series equals the absolute value of the closest pole of $R_{(n)}(p)$ to $0$.
In order to get some intuition about the radius of convergence we plotted the location of the poles of $R_{(n)}(p)$ on the complex plane in Figure \ref{fig:plotpoles}. For $n=10$ there is a pole at a point with absolute value $\approx 0.9598$, hence $R_{(10)}(p)$ has a radius of convergence strictly smaller than $1$ even though the rational function $R_{(n)}(p)$ is well-defined for all $p\in[0,1)$. 

\begin{figure}[h]\centering
    \includegraphics[draft=false,width=0.43\textwidth]{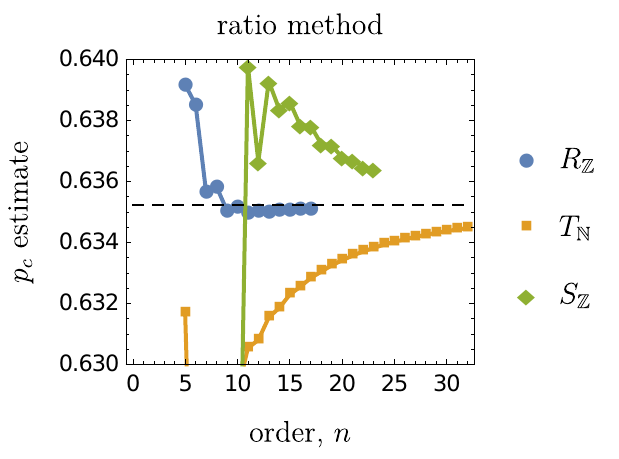}
    \;
    \includegraphics[draft=false,width=0.53\textwidth]{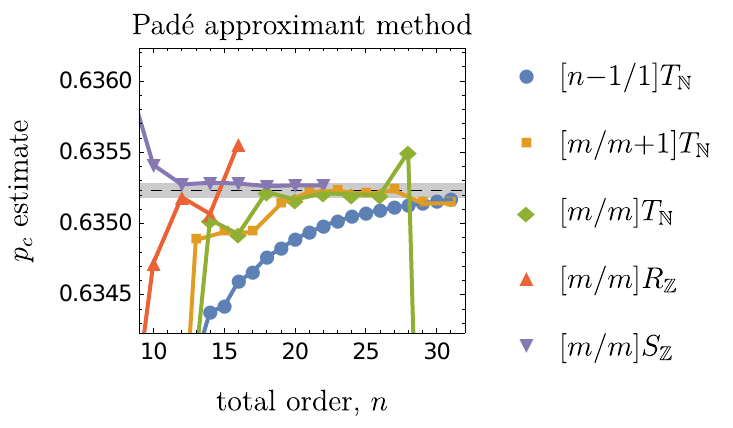}
    \caption{Estimates for $p_c$ based on the two methods. On the horizontal axis, $n$ is the number of power-series coefficients used for the estimate. The function $R_{\mathbb{Z}}$, $T_{\mathbb{N}}$ and $S_{\mathbb{Z}}$ are defined in the text below Conjecture~\ref{con:radiusconv}. The numbers $[m,m']$ (with $m+m'=n$) refer to the degree of the numerator and denominator respectively of the rational functions used in the Pad\'e approximant method. The gray shaded region shows our estimate $p_c = 0.63523 \pm 0.00005$.
    \label{fig:coeffs_conv_radius}}
\end{figure}

As was shown in Section~\ref{sec:stabilization}, Table~\ref{tab:coeffs}, the coefficients $a^{(n)}_k$ stabilize as $n$ grows. This is proven by Theorem~\ref{thm:stabilization}, since the family of DBS processes on the cycles, indexed by $n$, is a convergent familiy of PLUPs. The theorem only guarantees the stabilization for $n > 2k$ since going from a cycle of size $n$ to $n+1$ adds a vertex at a distance $n/2$ to any fixed vertex. In the table, however, we saw that the stabilization already holds for $n \geq k+1$. In Appendix~\ref{apx:preciseStabilization} we prove this more precise version of the stabilization that holds for cycles.
We define the `stabilized' coefficients $a^{(\infty)}_k = a^{(k+1)}_k$. We then define $R_{\mathbb{Z}}(p) = R_{(\infty)}(p) = \sum_{k=0}^\infty a^{(\infty)}_k p^k$ and make the following conjecture.
\begin{conjecture}[Radius of convergence] \label{con:radiusconv}
    The radius of convergence of $R_{(\infty)}(p)$ is equal to the critical probability $p_c$ of the DBS process.
\end{conjecture}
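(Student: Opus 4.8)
The plan is to prove the two matching inequalities $\rho \geq p_c$ and $\rho \leq p_c$, where $\rho$ denotes the radius of convergence of $R_{(\infty)}(p)=\sum_k a^{(\infty)}_k p^k$. By the Cauchy--Hadamard formula, $\rho = 1/\limsup_k |a^{(\infty)}_k|^{1/k}$, so the conjecture is equivalent to $\limsup_k |a^{(\infty)}_k|^{1/k} = 1/p_c$. The first step is to identify the stabilized series with an infinite-volume observable: using Theorem~\ref{thm:stabilization} together with a monotone-convergence argument for the partial sums in \eqref{eq:expectationsum}, one shows that for $p$ in a neighbourhood of $0$ the function $R_{(\infty)}(p)$ coincides with $R_{\mathbb{Z}}(p) := \E_{\mathbb{Z}}(\Res{0})$, the expected number of times the origin is selected while active in the DBS process on $\mathbb{Z}$ with i.i.d.\ initialisation. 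Everything then reduces to understanding where $R_{\mathbb{Z}}$ ceases to be finite and analytic.

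For the lower bound $\rho \geq p_c$ I would work in the subcritical regime $p < p_c$ and bound $\Res{0}$ by the total activity of the ``space-time cluster'' of the origin --- the set of all selection events that are linked to the origin through a chain of activations, in the spirit of the Power Light Cone. Each step of such a chain costs a factor $\bigO{p}$ by Lemma~\ref{lemma:activationpower}, so this cluster is stochastically dominated by a subcritical oriented-percolation or branching structure whose size has an exponentially decaying tail once $p<p_c$. Summing these tail bounds gives not only $\E_{\mathbb{Z}}(\Res{0})<\infty$ but a uniform estimate $|a^{(\infty)}_k| \le C\, r^{-k}$ for every $r<p_c$, hence $\limsup_k |a^{(\infty)}_k|^{1/k} \le 1/p_c$. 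An alternative route to the same conclusion is a disagreement-percolation or cluster-expansion argument establishing that $R_{\mathbb{Z}}$ extends analytically to the disk $|p|<p_c$; the combinatorial input (every term of the expansion carries a power of $p$ proportional to the diameter of the cluster it indexes) is again exactly the content of Lemmas~\ref{lemma:activationpower}--\ref{lemma:distancePower}.

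The upper bound $\rho \le p_c$ is the main obstacle, and it is the reason the statement is phrased as a conjecture. One needs $\limsup_k |a^{(\infty)}_k|^{1/k}\ge 1/p_c$, equivalently that $R_{\mathbb{Z}}(p)$ diverges as $p\uparrow p_c$ (or at least fails to be analytic there). The natural argument has three ingredients: (i) show $R_{\mathbb{Z}}(p)$ is nondecreasing in $p$, via a coupling in which a larger $p$ yields more activity --- already delicate for DBS because a resampling step can also \emph{deactivate} sites, so a naive monotone coupling of the three-site update does not obviously work; (ii) show that in the supercritical phase $p>p_c$ the activity survives with positive probability and, by translation invariance and an ergodic argument, the origin is resampled infinitely often on the survival event, so $R_{\mathbb{Z}}(p)=\infty$; and (iii) conclude that $R_{\mathbb{Z}}$ must blow up \emph{at} $p_c$ rather than stay bounded on $[0,p_c)$ and jump --- this needs an a priori lower bound on the growth rate of $R_{\mathbb{Z}}$ near $p_c$, for instance a differential inequality of the form $R_{\mathbb{Z}}'\gtrsim R_{\mathbb{Z}}^2$ of the kind used for the susceptibility in percolation and the contact process. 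Each of (i)--(iii) is a substantive statement about the DBS phase transition itself, and since even sharp two-sided bounds on $p_c$ are not known (cf.~\cite{Meester2002,Bandt2005}), carrying this through rigorously seems out of reach with current techniques.

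A complementary, perhaps more tractable, angle is to study the poles of $R_{(n)}(p)$ directly: the radius of convergence of each $R_{(n)}$ is the modulus of the nearest zero of $P'_{(n)}(p)$ to the origin, and by the stabilization of coefficients the quantity $\limsup_k |a^{(\infty)}_k|^{1/k}$ is governed by how these nearest poles accumulate as $n\to\infty$ (the numerics in Figure~\ref{fig:plotpoles} suggest they cluster near $p_c$). Turning this into a proof would require uniform-in-$n$ control of the zero set of $P'_{(n)}$ --- a Lee--Yang-type statement for the DBS transfer operator --- which is itself a hard problem of independent interest, closely related to the results quoted from \cite{Yang1952,Regts2017}.
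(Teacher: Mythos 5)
This statement is labelled a \emph{conjecture} in the paper, and the paper offers no proof of it; your text is likewise not a proof but a research programme, and to your credit you say so explicitly for the direction $\rho\le p_c$. The review therefore has to be: there is a genuine gap, indeed on \emph{both} sides of the equality. For the direction you present as essentially done, $\rho\ge p_c$, the key step is the claim that for every $p<p_c$ the space-time cluster of the origin is stochastically dominated by a structure with exponentially decaying size, uniformly enough to give $|a^{(\infty)}_k|\le C r^{-k}$ for all $r<p_c$. That is precisely the statement that the DBS phase transition is \emph{sharp} (exponential decay throughout the whole subcritical phase, not merely for $p$ small), and this is not known for DBS; the domination by a subcritical branching or oriented-percolation structure is only available for $p$ below some explicitly computable but strictly smaller threshold. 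Lemma~\ref{lemma:activationpower} gives you that each cluster of diameter $d$ carries a factor $\bigO{p^d}$, which yields analyticity (and hence $\rho>0$) in a small disk, but it does not push the estimate all the way to $p_c$. A further subtlety: even granting finiteness and real-analyticity of $R_{\mathbb{Z}}$ on $[0,p_c)$, the radius of convergence of the Taylor series \emph{at the origin} is governed by singularities in the complex disk $|p|<p_c$; Figure~\ref{fig:plotpoles} shows poles of $R_{(n)}$ with modulus below $1$ off the real axis, so one must genuinely rule out complex singularities accumulating inside the disk, which is exactly the Lee--Yang-type control you correctly flag as open.

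For the direction $\rho\le p_c$ your three ingredients (monotonicity of $R_{\mathbb{Z}}$ in $p$, divergence in the supercritical phase, and continuity/blow-up at $p_c$ rather than a jump) are each unproven for DBS --- as you note, even a monotone coupling fails naively because the update rule can deactivate sites --- so nothing here can be salvaged as a proof. In short: the conjecture remains a conjecture, your decomposition into two inequalities and your identification of $R_{(\infty)}$ with the infinite-volume expectation $\E_{\mathbb{Z}}(\Res{0})$ (via Theorem~\ref{thm:stabilization}) are the right framing, but neither inequality is established, and the lower bound is not as safe as your write-up suggests.
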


In Appendix~\ref{apx:absconvergence} we explain a method to compute coefficients of the $R_{(\infty)}(p)$ power series (see the text below Claim~\ref{claim:finitecontribution_Rn}).
As an application, we can apply known methods of series analysis. For example, Figure~\ref{fig:coeffs_conv_radius} shows estimates for $p_c$ using the ratio method and the Pad\'e approximant method. For details on these methods, see for example~\cite{Hunter1973}. The ratio method can be used to estimate the critical value when the singularity that determines the radius of convergence is at $p_c$, i.e. there are no other singularities closer to the origin, which is what we assume in Conjecture~\ref{con:radiusconv}.
The figure also shows estimates based on the power-series coefficients of the functions $T_{\mathbb{N}}$ and $S_{\mathbb{Z}}$.
The function $T_{\mathbb{N}}$ is the expected number of total steps on a chain with one end, with a single active vertex at that end as a starting state. This series is included because we can compute more terms for it.
The function $S_{\mathbb{Z}}$ is the probability of survival on the infinite line with a single active vertex as a starting state. This is a series in $q=1-p$ and it is included because other work studies the equivalent function for the contact process and this allows for comparison of critical exponents~\cite{Dickman1989}.
The Pad\'e approximant method suggests that the critical value is $p_c \approx 0.63523 \pm 0.00005$ and that the critical exponent for $S_{\mathbb{Z}}(q) \overset{q \uparrow q_c}{\sim} (q_c-q)^\beta$ is $\beta \approx 0.277$, which suggests that it is in the directed-percolation (DP) universality class alongside several variants of the contact process~\cite{Dickman1989,Inui1995,Tretyakov1997}.

\subsection{Reaching one end of the chain from the other}
Another quantity we considered in Section~\ref{sec:stabilization} is the probability of ever activating one end point of a finite chain, when we start the process with only a single active vertex at the other end.
Let us consider the length-$n$ chain, and suppose we start the DBS process with a single active vertex at site $1$. As in Equation~\eqref{eq:informalQseries}, we consider
\begin{align*}
    S_{[n]}(p) = \P(\BA{\{n\}} \mid \text{start }\{1\}).
\end{align*}
Note that in order to satisfy property~\ref{prop:indepInit} of the PLUP definition, the initial state needs to be $\{1\}$ with probability $p$ and $\emptyset$ with probability $1-p$. To get the above definition of $S_{[n]}(p)$ with a deterministic starting state one can then simply divide by $p$. 
The power-series coefficients of $S_{[n]}(p)$ stabilize, which follows from Lemma~\ref{lemma:distancePower} by letting $X=\{n\}$ and $Y=\{1\}$. However, as suggested by Figure \ref{fig:plotreachend}, the limiting power series around $p=0$ will become the zero function and it is therefore not so interesting. Instead, we can take the power series centered around $p=1$ and it turns out that also there the coefficients stabilize. We prove this below. Define $q=1-p$.

\begin{figure}[ht]
	\begin{center}
		\includegraphics[draft=false,width=\textwidth]{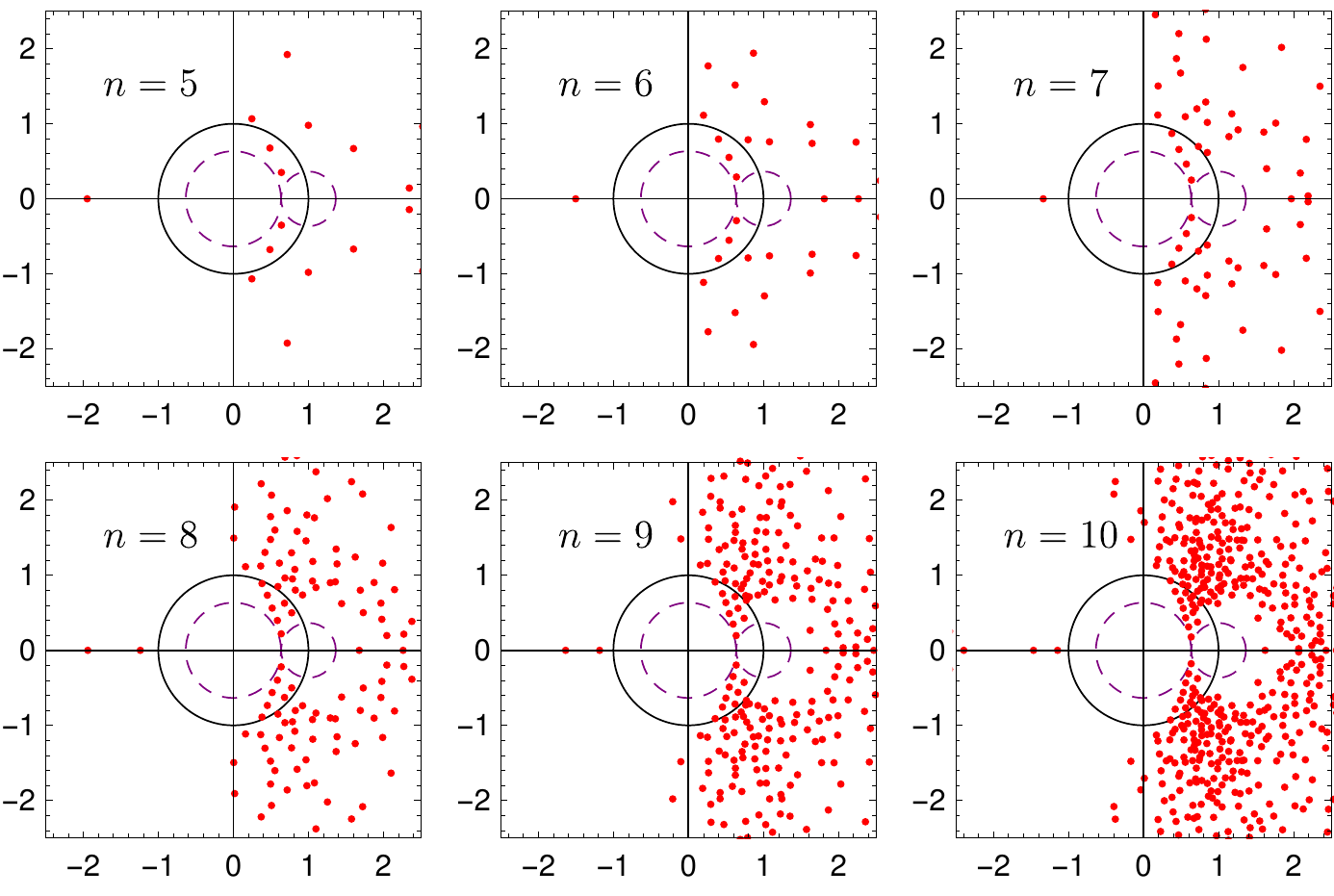}
        \caption{\label{fig:Qplotpoles} Location of the poles of $S_{[n]}$ \textbf{as a function of $p$} in the complex plane for different $n$. The black circle is the complex unit circle and the dashed circles have radius $p_c$ around $p=0$ and $1-p_c$ around $p=1$.}
	\end{center}
\end{figure}
\newcommand{\irchi}[2]{\raisebox{\depth}{$#1\chi$}} 
\DeclareRobustCommand{\rchi}{{\mathpalette\irchi\relax}}
Similarly to what we did for $R_{(n)}(p)$ we can write $S_{[n]}(q)$ using a matrix inverse. We will start the process in the (deterministic) state with a single active vertex at location 1, denoted by the probability vector $\delta_{\{1\}}$. Define $\mathcal{A}_n = \{ A \subseteq [n] \mid n \in A \}$, the set of all states where vertex $n$ is active. Let $M_{[n]}$ be the transition matrix for the DBS process on the chain of length $n$. Define the matrix $\tilde{M}_{[n]}$ as $M_{[n]}$ but with some entries set to zero. Set the row and column of the all-inactive state $\emptyset$ to zero, $(\tilde{M}_{[n]})_{A,\emptyset} = (\tilde{M}_{[n]})_{\emptyset,A} = 0$ for all $A\subseteq [n]$. Furthermore set all rows $A\in \mathcal{A}_n$ to zero: $(\tilde{M}_{[n]})_{A,A'} = 0$ for all $A'\subseteq [n]$. That is, whenever vertex $n$ is active there is no outgoing transition. Denote by $\rchi_{\!\mathcal{A}_n}$ the vector that is 1 for all $A\in\mathcal{A}_n$ and zero everywhere else. We have
\begin{align}
    S_{[n]}(q) &= \P( \text{vertex $n$ becomes active} ) \nonumber\\
             &= \sum_{k\geq 0} \P( \text{vertex $n$ activates for the first time at step $k$} ) \nonumber\\
             &= \sum_{k\geq 0} \delta_{\{1\}} \cdot \tilde{M}_{[n]}^k \cdot \rchi_{\!\mathcal{A}_n} \nonumber\\
             &= \delta_{\{1\}} \cdot (\id - \tilde{M}_{[n]})^{-1} \cdot \rchi_{\!\mathcal{A}_n} \tag{by the geometric series} \nonumber\\
             &= \sum_{k\geq 0} b^{[n]}_k q^k \label{eq:Qseries}
\end{align}
With the same argument as before we see that $S_{[n]}$ must be a fraction of two polynomials in $p$ (and also in $q$).
The poles of $S_{[n]}$ are shown in Figure \ref{fig:Qplotpoles} where $S_{[n]}$ is considered a function of $p$ to be comparable with $R_{(n)}(p)$. The coefficients $b^{[n]}_k$ of the $q$ power series are shown in Table~\ref{tab:Qcoeffs}.

\begin{claim}
    The coefficients $b^{[n]}_k$ of the power series of $S_{[n]}(q)$ in Equation~\eqref{eq:Qseries} stabilize.
\end{claim}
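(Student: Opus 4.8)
The plan is to prove the quantitative statement $S_{[n]}(q)-S_{[n+1]}(q)=\bigO{q^{n}}$ for every $n$; since this means $b^{[n]}_k=b^{[n+1]}_k$ for all $k\le n-1$, it follows that for each fixed $k$ the value $b^{[n]}_k$ is constant for $n\ge k+1$, which is exactly the claimed stabilisation (and matches the observed $b^{[n]}_k=b^{[k+1]}_k$). The elementary structural fact I would use throughout is that the DBS process on a chain $[m]$ started from the single active vertex $\{1\}$ always has its set of ever-active vertices equal to an initial segment $\{1,\dots,r\}$: activations propagate only to neighbours, and the end of the chain blocks growth to the left of vertex $1$. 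Consequently, on $[n+1]$ we have $\BA{\{n+1\}}\subseteq\BA{\{n\}}$, and on the event $\BA{\{n\}}$ all of the vertices $1,\dots,n$ become active at some time.

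\textbf{Reducing the difference to a single event.}
First I would identify $S_{[n]}(q)-S_{[n+1]}(q)$ with the probability of one explicit event on $[n+1]$. Conditioning on $\RI{\{n\}}$ on the chain $[n+1]$, vertex $n$ --- hence also vertex $n+1$, which can only be activated through $n$ --- is never active, so neither vertex is ever selected, and the dynamics only ever touch vertices $1,\dots,n-1$, whose update rules are identical on $[n]$ and on $[n+1]$ (a coupling argument, as in the proof of Lemma~\ref{lemma:splitting}). Therefore $\P_{[n+1]}(\RI{\{n\}})=\P_{[n]}(\RI{\{n\}})$, i.e. $\P_{[n+1]}(\BA{\{n\}})=S_{[n]}(q)$. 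Combined with $\BA{\{n+1\}}\subseteq\BA{\{n\}}$ this gives
\begin{align*}
 S_{[n]}(q)-S_{[n+1]}(q)=\P_{[n+1]}(\BA{\{n\}})-\P_{[n+1]}(\BA{\{n+1\}})=\P_{[n+1]}\!\left(\BA{\{n\}}\cap\RI{\{n+1\}}\right).
\end{align*}

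\textbf{A ``dual'' of Lemma~\ref{lemma:activationpower}.}
It then remains to show this last probability is $\bigO{q^{n}}$ --- the analogue of Lemma~\ref{lemma:activationpower} in the variable $q$. On the event $\BA{\{n\}}\cap\RI{\{n+1\}}$ all of the vertices $1,\dots,n$ become active (by the structural fact), and since for $q\in(0,1)$ the process terminates at $\emptyset$ almost surely, each of these $n$ vertices is active at some time and inactive at termination; hence each of them is resampled to the inactive value at least once. In the DBS update rule a resampling outcome in which a given vertex takes the inactive value carries a factor $q=1-p$, and the $n$ required active$\to$inactive toggles involve $n$ distinct vertices, hence $n$ distinct such factors. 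Thus every Markov-chain path realising $\BA{\{n\}}\cap\RI{\{n+1\}}$ has probability $\bigO{q^{n}}$, and summing over paths gives $\P_{[n+1]}(\BA{\{n\}}\cap\RI{\{n+1\}})=\bigO{q^{n}}$, which completes the proof.

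\textbf{The main obstacle.}
The step requiring the most care is the last regrouping: there are infinitely many paths realising the event (one may insert arbitrarily many state-preserving resampling steps before the process dies), so reading off the low-order coefficients of the power series in $q$ from the sum over paths needs an absolute-convergence argument of the kind in Appendix~\ref{apx:absconvergence} --- which is phrased there around $p=0$ and must be re-run around $p=1$. The essential input making this work is that the event forces the process to terminate, which already absorbs $n$ powers of $q$, while the remaining freedom is controlled geometrically because on a fixed finite chain the probability of ``staying put'' is bounded away from $1$.
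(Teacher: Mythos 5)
Your proposal is correct and follows essentially the same route as the paper: after reindexing, your identity $S_{[n]}(q)-S_{[n+1]}(q)=\P_{[n+1]}(\BA{\{n\}}\cap\RI{\{n+1\}})$ is exactly the paper's decomposition $\P_{[n]}(\RI{\{n\}})=\P_{[n-1]}(\RI{\{n-1\}})+\P_{[n]}(\BA{\{n-1\}}\cap\RI{\{n\}})$ obtained via the Splitting Lemma, and the $\bigO{q^{n}}$ bound via ``every ever-active vertex must be deactivated at least once, each deactivation costing a factor $q$'' is the paper's argument verbatim. You also correctly flag the one real technical point — absolute convergence of the path sum as a series in $q$ — which the paper handles in Appendix~\ref{apx:absqconvergence}.
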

\begin{proof}
    Let $\RI{\{n\}}$ and its complement $\BA{\{n\}}$ be as defined in Definition \ref{def:events}.
    In the following we assume that the starting state is $\{1\}$ with probability $p$ and $\emptyset$ with probability $1-p$, so the process is a PLUP. We have $S_{[n]}(p)= \frac{1}{p} \cdot \P(\BA{n})$, since $S_{[n]}(p)$ has a deterministic starting state.
    By Lemma \ref{lemma:splitting} we have $\P_{[n]}(\RI{\{n-1\}}) = \P_{[n-1]}(\RI{\{n-1\}})$.
    Consider $1-p S_{[n]}$, i.e. the probability that the $n$-th vertex is \emph{not} activated. We have
    \begin{align*}
        1 - p S_{[n]} &= \P_{[n]}( \RI{\{n\}} ) \tag{definition of $S_{[n]}$}\\
        &= \P_{[n]}(\RI{\{n-1\}} \cap \RI{\{n\}}) + \P_{[n]}(\BA{\{n-1\}} \cap \RI{\{n\}}) \tag{partition of events}\\
        &= \P_{[n-1]}(\RI{\{n-1\}}) + \P_{[n]}(\BA{\{n-1\}} \cap \RI{\{n\}}) \tag{Lemma \ref{lemma:splitting}}\\
        &= 1 - p S_{[n-1]} + \P_{[n]}(\BA{\{n-1\}} \cap \RI{\{n\}}) .
    \end{align*}
    Note that for the event $(\BA{\{n-1\}} \cap \RI{\{n\}})$ to hold, all vertices $1,...,n-1$ must have been active. Since the process terminates with probability 1, this means all those vertices must also have been deactivated at least once. In the DBS process a deactivation is $\bigO{q}$, so every terminating path of the Markov Chain that is in this set has a factor of at least $q^{n-1}$ associated to it, hence $\P_{[n]}(\BA{\{n-1\}} \cap \RI{\{n\}}) = \bigO{q^{n-1}}$. Here we use the absolute convergence of certain power series in $q$, which we prove in Claim~\ref{claim:absqconvergence} in Appendix~\ref{apx:absqconvergence}. We see that $S_{[n]}(q) - S_{[n-1]}(q) = \bigO{q^{n-1}}$ so the coefficients stabilize.
\end{proof}

\bibliographystyle{alphaUrlePrint}
\bibliography{main.bib}

\appendix

\newpage

\section{Decay of correlations} \label{apx:correlations}

Recall Corollary~\ref{cor:decay}.

\decay*

\begin{proof}
	First observe that if $d(X,Y)=\infty$, it means that either $X$ and $Y$ are in different connected components of $G$, or one of them is the empty set, therefore $L_X$ and $L_Y$ are independent events, so the statement holds.
	
	Note that due to Property~\ref{prop:indepInit} the only path which has a non-zero constant term is the trivial path, when every vertex is initialized as inactive, thus the constant term of the probability of any local event is either $0$ or $1$. Also the constant term of $\P_{G}(L_X\cap L_Y)$ is $1$ if and only if the constant terms of both $\P_{G}(L_X)$ and $\P_{G}(L_Y)$ are $1$, which concludes the $d(X,Y)=0$ case.  
    
    Note that by De Morgan's law, \eqref{eq:BACovariance} is equivalent with 
	\begin{equation}\label{eq:RICovariance}
		\P_{G}(\RI{X}\cap \RI{Y})-\P_{G}(\RI{X})\P_{G}(\RI{Y})=\bigO{p^{d(X,Y)+1}}.
	\end{equation}
	Now we proceed by induction on $d(X,Y)$. Assume \eqref{eq:LCovariance}-\eqref{eq:BACovariance} hold for $d(X,Y)=d-1$. We will prove the statement for $d(X,Y)=d$. 
    We apply a similar idea as in the proof of Lemma~\ref{lemma:distancePower}. Define
    \begin{align*}
        L_X^i     &:= L_X \cap \RI{\overline{\partial}(X,i)} \cap \bigcap_{j\in[i-1]} \BA{\overline{\partial}(X,j)} \qquad\text{for}\quad i \in [d-1],\\
        L_X^{d} &:= L_X \cap \bigcap_{j\in [d-1]} \BA{\overline{\partial}(X,j)} .
    \end{align*}
    When $L_X^i$ holds, everything at distance $i$ remains inactive, but for all distances $j \leq i-1$ there exist vertices that become active at that distance.
    These events form a partition $L_X=\dot\bigcup_{i\in [d]}L_X^{i}$, and similarly for $L_Y^i$. Below we depict $L_X^{i} \cap L_Y^j$ graphically.
    \begin{center}
        \includegraphics[draft=false]{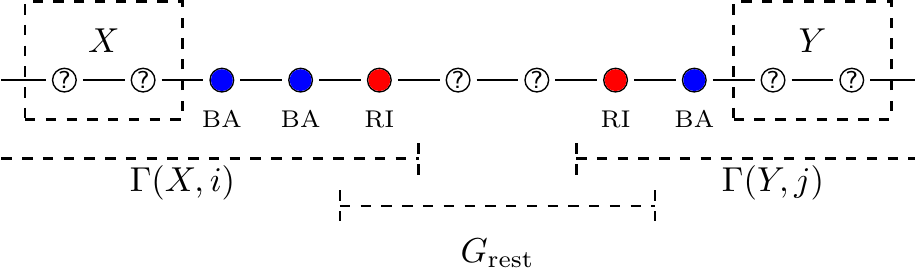}
    \end{center}
	We will show the inductive step for both \eqref{eq:LCovariance}-\eqref{eq:BACovariance} at the same time, for which we introduce a number $c$ such that $c=1$ if $L_X=\BA{X}$ and $L_Y=\BA{Y}$, and $c=-1$ otherwise.
    By Lemma~\ref{lemma:activationpower}
    \begin{align} \label{eq:eventpowers}
        \P(L_X^i \cap L_Y^j) = \bigO{p^{i+j-1 + c}} \qquad\text{and}\qquad
        \P(L_X^i) \cdot \P(L_Y^j) = \bigO{p^{i+j-1 + c}} ,
    \end{align}
    for any graph on which the events are defined. Since the events form a partition, we have
    \begin{align*}
        \P_G(L_X \cap L_Y) = \sum_{i,j\in[d]}\P_G(L_X^i \cap L_Y^j) \qquad\text{and}\qquad
        \P_G(L_X) \cdot \P_G(L_Y) = \sum_{i,j\in[d]}\P_G(L_X^i) \cdot \P_G(L_Y^j),
    \end{align*}
    so it is sufficient to prove the statement for each $i,j$ separately, i.e. we want to show
    \begin{align*}
        \P_G(L_X^i \cap L_Y^j) - \P_G(L_X^i) \P_G(L_Y^j) = \bigO{p^{d+c}} .
    \end{align*}
    When $i+j-1 \geq d$ then its trivial by~\eqref{eq:eventpowers}. Now fix $i,j$ such that $i+j \leq d$ and define $G^{i,j}_\mathrm{rest} := G \setminus \(\;\Gamma(X,i-1)\cup \Gamma(Y,i-1)\;\)$, as indicated in the diagram. The $\RI{..}$ events split the graph in three parts, so we have
    \begin{align*}
		\!\!\!\!\P_{G}(L_X^i \cap L_Y^j)
        &= \P_{\Gamma(X,i)}(L_X^i \mid \II{\overline{\partial}(X,i)}) \cdot \P_{\Gamma(Y,j)}(L_Y^j \mid \II{\overline{\partial}(Y,j)}) \cdot \P_{G^{i,j}_\mathrm{rest}}(\RI{\overline{\partial}(X,i)} \cap \RI{\overline{\partial}(Y,j)})
        \tag{using Lemma~\ref{lemma:splitting} twice} \\
        &= \P_{\Gamma(X,i)}(L_X^i \mid \II{\overline{\partial}(X,i)}) \cdot \P_{\Gamma(Y,j)}(L_Y^j \mid \II{\overline{\partial}(Y,j)}) \\
        & \qquad \cdot \left[ \P_{G^{i,j}_\mathrm{rest}}(\RI{\overline{\partial}(X,i)}) \cdot \P_{G^{i,j}_\mathrm{rest}}(\RI{\overline{\partial}(Y,j)}) + \bigO{p^{(d-i-j)+1}} \right]
        \tag{by induction of~\eqref{eq:RICovariance}} \\
        &= \P_{\Gamma(X,i)}(L_X^i \mid \II{\overline{\partial}(X,i)}) \cdot \P_{\Gamma(Y,j)}(L_Y^j \mid \II{\overline{\partial}(Y,j)}) \\
        & \qquad
        \cdot \P_{G^{i,j}_\mathrm{rest}}(\RI{\overline{\partial}(X,i)})
        \cdot \P_{G^{i,j}_\mathrm{rest}}(\RI{\overline{\partial}(Y,j)}) + \bigO{p^{d+c}}
        \tag{by~\eqref{eq:eventpowers}} \\
        &= \P_{\Gamma(X,i)}(L_X^i \mid \II{\overline{\partial}(X,i)}) \cdot \P_{\Gamma(Y,j)}(L_Y^j \mid \II{\overline{\partial}(Y,j)}) \\
        & \qquad
        \cdot \P_{G\setminus\Gamma(X,i-1)}(\RI{\overline{\partial}(X,i)})
        \cdot \P_{G\setminus\Gamma(Y,j-1)}(\RI{\overline{\partial}(Y,j)}) + \bigO{p^{d+c}}
        \tag{by Lemma~\ref{lemma:distancePower} and~\eqref{eq:eventpowers}} \\
        &= \P_{G}(L_X^i) \cdot \P_{G}(L_Y^j) + \bigO{p^{d+c}}
        \tag{using Lemma~\ref{lemma:splitting} twice}
    \end{align*}
\end{proof}

\section{Absolute convergence} \label{apx:absconvergence}

Recall Lemma~\ref{lemma:activationpower}.

\activationpower*

The proof requires the regrouping of terms in an infinite sum. In this section we prove the absolute convergence of certain series that allows for this regrouping.
We start with some notation.

\newcommand{\paths}[1]{\textsc{paths}_{#1}}
\newcommand{\tpaths}[1]{\textsc{tpaths}_{#1}}
\newcommand{\mindeg}{\mathrm{mindeg}}
\newcommand{\maxdeg}{\mathrm{maxdeg}}
\newcommand{\absp}[1]{\left\lVert #1 \right\rVert_{\mathrm{abs}}}

\begin{definition}[Paths] \label{def:paths1}
    Define a \emph{path} of length $k$ as an initialization and sequence of $k$ updates, where we only count steps in which an active vertex was selected. We write a path $\xi$ as
\begin{align*}
    \xi=\left( (\text{initialize to }A_0), (v_1, R_1), (v_2, R_2), ..., (v_k, R_k) \right).
\end{align*}
    Here $v_i$ denotes the vertex that was selected in the $i$-th step and $R_i\subseteq \Gamma(v_i)$ is the result of the corresponding update that happened afterwards. After $t$ steps, the state of the process is $A_t = (A_{t-1} \setminus \Gamma(v_t) ) \cup R_t$. We say a path is terminating if $A_k = \emptyset$.
    Denote by $\paths{A,k}$ the set of all paths $\xi$ that initialize to $A$ and have length $k$.
    Denote by $\tpaths{A,k}$ the set of all \emph{terminating} paths that initialize to $A$ and have length $k$.
\end{definition}
We have $\tpaths{A,k}\subseteq\paths{A,k}$. Any local event is a collection of terminating paths\footnote{Up to the zero probability event of non-termination.}, and
\begin{align} \label{eq:sumtpaths}
    \sum_{k=0}^{\infty} \sum_{A\subseteq [n]} \sum_{\xi\in\tpaths{A,k}} \P(\xi) = 1.
\end{align}
For a general PLUP we have $\P(\xi) = \P(A_0) \P( (v_1,R_1) \mid A_0) \P( (v_2,R_2) \mid A_1) \cdots \P( (v_k,R_k) \mid A_{k-1})$ where the polynomial $\P(A_0)$ is the probability of starting in state $A_0$ and $\P( (v_t,R_t) \mid A_{t-1})$ are polynomials satisfying property \ref{prop:locUpdateRule} of the PLUP definition.
For the DBS process these polynomials take the specific form $\P(\xi) = \P(A_0) Z_\xi p^{|R_1| + ... + |R_k|} (1-p)^{3k - |R_1| + ... + |R_k|}$ where $Z_\xi$ is some $p$-independent factor. 
\begin{definition}[Polynomials]
    Let $Q(p) = a_m p^m + a_{m+1} p^{m+1} + ... + a_M p^M$ be a polynomial where $a_m\neq0$ and $a_M\neq 0$.
    Define $\mindeg(Q(p)) = m$, $\maxdeg(Q(p)) = M$ and define by $\absp{Q}$ the polynomial obtained by taking the absolute values of the coefficients:
    \begin{align*}
        \absp{Q}(p) = |a_m| p^m + |a_{m+1}| p^{m+1} + ... + |a_M| p^M.
    \end{align*}
\end{definition}
By the triangle inequality $\absp{f\cdot g}(p) \leq \absp{f}(p) \cdot \absp{g}(p)$ for any polynomials $f,g$.
\begin{claim}\label{claim:finitecontribution}
    The polynomials $\P(\paths{A,k})$, $\P(\tpaths{A,k})$ and $\P(\xi)$ for any $\xi\in\paths{A,k}$ all satisfy
    \begin{align*}
        \mindeg(\cdot) \geq c \cdot (k - |A|) + \mindeg(\P(A))
        \; , \qquad
        \maxdeg(\cdot) \leq c' \cdot k + \maxdeg(\P(A)).
    \end{align*}
    Here $0 < c < c'$ are constants depending on the particular process and $\P(A)$ is the probability of starting in state $A$ (a polynomial).
\end{claim}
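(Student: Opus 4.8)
The plan is to prove the two degree bounds simultaneously, since the terminating-path polynomials and event polynomials are just sums (with nonnegative coefficients, after passing to $\absp{\cdot}$) of the single-path polynomials $\P(\xi)$. First I would reduce everything to a statement about a single path $\xi \in \paths{A,k}$: since $\P(\tpaths{A,k}) = \sum_{\xi \in \tpaths{A,k}} \P(\xi)$ is a sum of polynomials all of which have nonnegative... wait, not nonnegative coefficients, but the key point is that $\mindeg$ of a sum is at least the minimum of the $\mindeg$'s (no cancellation can \emph{lower} the minimum degree) and $\maxdeg$ of a sum is at most the maximum of the $\maxdeg$'s. So it suffices to establish, for every single $\xi = ((\text{init to }A_0),(v_1,R_1),\dots,(v_k,R_k)) \in \paths{A,k}$ (so $A_0 = A$), that $\mindeg(\P(\xi)) \geq c\,(k-|A|) + \mindeg(\P(A))$ and $\maxdeg(\P(\xi)) \leq c'\,k + \maxdeg(\P(A))$.

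For a single path, $\P(\xi) = \P(A_0)\cdot\prod_{t=1}^k \P((v_t,R_t)\mid A_{t-1})$ is a product of polynomials, so by the triangle-inequality fact $\absp{f\cdot g}\le \absp{f}\,\absp{g}$ and the elementary identities $\mindeg(fg)=\mindeg(f)+\mindeg(g)$, $\maxdeg(fg)=\maxdeg(f)+\maxdeg(g)$ (valid for any polynomials, since leading/trailing coefficients multiply and don't vanish), it suffices to control each factor $\P((v_t,R_t)\mid A_{t-1})$. By PLUP property~\ref{prop:locUpdateRule} each such factor $P_{R_t}$ is a polynomial in $p$ of bounded degree — bounded because $R_t \subseteq \Gamma(v_t)$ and the local update rules form a finite family determined by the graph's bounded local structure; call a uniform upper bound $c'$ on $\maxdeg$ of any single update factor (and on $\maxdeg(\P(A))/|A|$-type contributions, which I'd fold in). That immediately gives $\maxdeg(\P(\xi)) \le \maxdeg(\P(A_0)) + c'k$, the second bound, with $c'$ also absorbing the $\maxdeg$ of the initialization polynomial per vertex if one prefers a clean $\maxdeg(\P(A)) + c'k$. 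For the first bound, the crucial observation is property~\ref{prop:locUpdateRule}'s constant-term condition: if $|A_t \setminus A_{t-1}| > 0$ (a fresh activation occurs at step $t$) \emph{or} $A_t = A_{t-1}$, then $P_{R_t} = \bigO{p}$, i.e. $\mindeg \ge 1$ (and I can take a uniform $c$ with $\mindeg \ge c$ in those cases). So among the $k$ update steps, every step that either creates a new active vertex or leaves the state unchanged contributes at least $c$ to $\mindeg$; only steps that \emph{strictly shrink} the active set without any fresh activation can contribute $0$.

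The heart of the argument — and the step I expect to be the main obstacle to state cleanly — is the combinatorial accounting that bounds the number of ``free'' (zero-mindeg) steps. A step contributes $0$ to $\mindeg$ only if $A_t \subsetneq A_{t-1}$, i.e. that step strictly decreases $|A_\bullet|$, say by $\delta_t \ge 1$. The active-set size starts at $|A_0| = |A|$, never goes negative, and a step that is \emph{not} free increases $|A_\bullet|$ by at most some bounded amount $b$ (at most $|\Gamma(v_t)|-1$ many new vertices). Letting $F$ be the number of free steps, we have $\sum_{t \text{ free}} \delta_t \le |A| + b\cdot(k - F)$ (total decrease cannot exceed initial size plus all increases), and since each free step has $\delta_t \ge 1$, $F \le |A| + b(k-F)$, hence $F \le \frac{|A| + bk}{1+b} \le |A| + \frac{b}{1+b}k$. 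Wait — I want $F$ small relative to $k$; rearranging, $F(1+b) \le |A| + bk$ so $F \le \frac{|A|+bk}{1+b}$, and then the number of non-free steps is $k - F \ge k - \frac{|A|+bk}{1+b} = \frac{k - |A|}{1+b}$, each contributing $\ge c$ to $\mindeg$. Therefore $\mindeg(\P(\xi)) \ge \mindeg(\P(A)) + \frac{c}{1+b}(k-|A|)$, which is the first bound after renaming the constant $c \leftarrow \frac{c}{1+b}$ (and shrinking $c$ further if needed so $0 < c < c'$). The only genuine subtlety is making the ``strictly decreasing'' vs ``fresh activation or unchanged'' dichotomy exhaustive — but $A_t \bigtriangleup A_{t-1} \subseteq \Gamma(v_t)$ together with the trichotomy (fresh activation present / $A_t = A_{t-1}$ / $A_t \subsetneq A_{t-1}$) covers all cases, so property~\ref{prop:locUpdateRule} applies in the first two, and I've just bounded the count of the third. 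I would write this up as: (1) reduce to single paths via $\mindeg/\maxdeg$ of sums; (2) multiplicativity of $\mindeg,\maxdeg$ over the product defining $\P(\xi)$; (3) the counting lemma above for free steps; (4) collect constants.
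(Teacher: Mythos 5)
Your proposal is correct and follows essentially the same route as the paper's proof: reduce to single paths, use multiplicativity of $\mindeg$ and $\maxdeg$ over the product $\P(\xi)=\P(A_0)\prod_t \P((v_t,R_t)\mid A_{t-1})$, and exploit the trichotomy in property~\ref{prop:locUpdateRule} so that only strictly-shrinking steps can contribute zero to $\mindeg$, with the number of such steps bounded via the bounded per-step growth of the active set. The paper packages your global free-step count as the single per-step inequality $\mindeg(\P((v_t,R_t)\mid A_{t-1}))\geq \frac{1}{d_\mathrm{max}+1}\left(1+|A_t|-|A_{t-1}|\right)$, which telescopes to the same bound with the same constant $c=\frac{1}{d_\mathrm{max}+1}$.
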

\begin{proof}
    Note that
    \begin{align*}
        \P(\paths{A,k})
        = \sum_{\xi \in \paths{A,k}} \P(\xi)
    \end{align*}
    is a sum over finitely many polynomials. It is sufficient to prove the statement for each $\xi$ and it then follows for the sum. Since $\tpaths{A,k}\subseteq\paths{A,k}$ it also follows for $\P(\tpaths{A,k})$. Let $\xi$ be a path as described in Definition~\ref{def:paths1}. As stated in the text below Definition~\ref{def:paths1} we have $\P(\xi) = \P(A_0) \P( (v_1,R_1) \mid A_0) \P( (v_2,R_2) \mid A_1) \cdots \P( (v_k,R_k) \mid A_{k-1})$ where $A_t\subseteq[n]$ is the state after $t$ steps and $A_0 = A$. Let $c'$ be the degree of the highest order term of any possible local-update step of this process (finitely many possibilities) than $\maxdeg(\P(\xi)) \leq c' \cdot k + \maxdeg(\P(A))$.

    Note $\mindeg(\P(\xi)) = \mindeg(\P(A)) + \mindeg(\P((v_1,R_1)\mid A_0)) + \cdots + \mindeg(\P((v_k,R_k)\mid A_{k-1}))$.
    If $|A_{t}| - |A_{t-1}| \geq 0$ then either $A_t = A_{t-1}$ or $|A_t \setminus A_{t-1}| > 0$.
    By property \ref{prop:locUpdateRule} of the PLUP definition we therefore have that $|A_{t}| - |A_{t-1}| \geq 0$ implies $\mindeg( \P((v_t,R_t)\mid A_{t-1}) ) \geq 1$. Furthermore, $|A_{t}| - |A_{t-1}| \leq d_\mathrm{max}$ where $d_\mathrm{max}$ is the maximum degree of the vertices in $G$.
    Therefore we have
    \begin{align*}
        \mindeg( \P((v_t,R_t)\mid A_{t-1}) ) \geq \frac{1}{d_\mathrm{max}+1}\( 1 + |A_{t}| - |A_{t-1}| \)
    \end{align*}
    Summing both sides over $t$ we obtain
    $\mindeg(\P(\xi)) - \mindeg(\P(A)) \geq \frac{1}{d_\mathrm{max}+1}( k + |A_k| - |A| )$.
    This proves the claim with $c = \frac{1}{d_\mathrm{max}+1}$.
\end{proof}

For the DBS process, in the context of Section~\ref{sec:Rn}, we can slightly refine the above claim.
\begin{claim} \label{claim:finitecontribution_Rn}
    Let $\rho^{(0)}$, $M_{(n)}$ and $J$ be as defined in Section~\ref{sec:Rn}. The polynomial $\rho^{(0)}\cdot M_{(n)}^k \cdot J^T = \sum_{A\subseteq [n]} \P(\paths{A,k})$ in $p$ has lowest-order term at least $p^{k}$ and highest-order term at most $p^{n+3k}$. \tnote{technically the lowest order term is $p^{k+1}$ because $J_{\emptyset} = 0$.}
\end{claim}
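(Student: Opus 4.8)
The plan is to expand $\rho^{(0)}\cdot M_{(n)}^k\cdot J^T$ as a sum of path‑probabilities and to read off the smallest and largest power of $p$ in each summand. Multiplying out the matrix product gives $\rho^{(0)}\cdot M_{(n)}^k\cdot J^T=\sum_{\xi}\P(\xi)$, where $\xi$ ranges over paths of length $k$ in the sense of Definition~\ref{def:paths1} — an initial state $A_0$ followed by updates $(v_1,R_1),\dots,(v_k,R_k)$ with $v_t$ an active vertex of $A_{t-1}$, $R_t\subseteq\Gamma(v_t)$, and $A_t=(A_{t-1}\setminus\Gamma(v_t))\cup R_t$ — subject to the restriction that every intermediate state is non‑empty: $A_0,\dots,A_{k-1}\ne\emptyset$ is forced because an active vertex must be available at each step, and $A_k\ne\emptyset$ comes from $J_\emptyset=0$ together with the zeroed $\emptyset$‑row and column of the leaking matrix $M_{(n)}$.

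For the DBS process on the cycle each such path‑probability factors cleanly:
\begin{align*}
    \P(\xi)=\Big(\prod_{t=1}^{k}\tfrac{1}{|A_{t-1}|}\Big)\, p^{\,|A_0|+\sum_{t}|R_t|}\,(1-p)^{\,(n-|A_0|)+\sum_{t}(3-|R_t|)},
\end{align*}
since selecting $v_t$ among the $|A_{t-1}|$ active vertices is $p$‑independent, resampling the three vertices of $\Gamma(v_t)$ contributes $p^{|R_t|}(1-p)^{3-|R_t|}$, and the i.i.d.\ initialization contributes $p^{|A_0|}(1-p)^{n-|A_0|}$. The combinatorial prefactor is a positive constant, so $\mindeg\P(\xi)=|A_0|+\sum_t|R_t|$ and, adding the two exponents, $\maxdeg\P(\xi)=n+3k$ for every path. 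Since cancellations between summands can only lower the top degree, this already yields $\maxdeg(\rho^{(0)}\cdot M_{(n)}^k\cdot J^T)\le n+3k$.

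For the lower bound the key step is a one‑line per‑update inequality. Because $R_t\subseteq\Gamma(v_t)$, the state $A_t=(A_{t-1}\setminus\Gamma(v_t))\cup R_t$ is a union of disjoint sets, so $|A_t|=|A_{t-1}|-|A_{t-1}\cap\Gamma(v_t)|+|R_t|$, i.e.\ $|R_t|=|A_t|-|A_{t-1}|+|A_{t-1}\cap\Gamma(v_t)|$. The selected vertex satisfies $v_t\in A_{t-1}\cap\Gamma(v_t)$, hence $|R_t|\ge|A_t|-|A_{t-1}|+1$; summing over $t=1,\dots,k$ and adding $|A_0|$ telescopes to
\begin{align*}
    \mindeg\P(\xi)=|A_0|+\sum_{t=1}^{k}|R_t|\;\ge\;|A_k|+k\;\ge\;k ,
\end{align*}
using $|A_k|\ge0$ for the stated bound (and $|A_k|\ge1$ for the sharper $p^{k+1}$ noted in the footnote, which is valid precisely because $J_\emptyset=0$ removes terminating paths). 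Therefore $\mindeg(\rho^{(0)}\cdot M_{(n)}^k\cdot J^T)\ge k$, completing the claim.

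I do not expect a real obstacle here; the only two points needing care are getting the factorization of $\P(\xi)$ exactly right — so that the degree read‑offs are equalities, not just estimates — and keeping track of which paths actually survive, which is exactly what separates the claim's $p^k$ (terminating paths allowed) from the footnote's $p^{k+1}$ (after $J_\emptyset=0$ discards them). Everything else reduces to the telescoping identity, which works because the resampled neighbourhood always contains the active vertex that was just selected, so each of the $k$ updates contributes at least one power of $p$.
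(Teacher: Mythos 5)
Your proof is correct and follows essentially the same route as the paper: the per-step inequality $|R_t|\ge 1+|A_t|-|A_{t-1}|$ that you derive from $v_t\in A_{t-1}\cap\Gamma(v_t)$ is exactly the refinement $\mindeg\(\P((v_t,R_t)\mid A_{t-1})\)\ge 1+|A_t|-|A_{t-1}|$ the paper plugs into Claim~\ref{claim:finitecontribution} (giving $c=1$, $c'=3$), and the telescoping plus the initialization degrees $|A_0|$ and $n$ yield the same bounds $p^k$ and $p^{n+3k}$. Your version is merely more self-contained, writing out the DBS path probability explicitly rather than citing the general claim, and it correctly accounts for the footnote's sharper $p^{k+1}$ via $J_\emptyset=0$.
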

\begin{proof}
    We repeat the proof of Claim~\ref{claim:finitecontribution}. Note that $\mindeg( \P((v_t,R_t)\mid A_{t-1}) ) \geq 1 + |A_{t}| - |A_{t-1}|$ for the DBS process, so $c=1$. For DBS, $c' = 3$ which is the maximum degree of the local update rule ($p^3$ occurs when all three resampled vertices become active). The claim then follows by noting that $\P(A) = p^{|A|}(1-p)^{n-|A|}$ in the starting state $\rho^{(0)}$.
\end{proof}
\pagebreak[1]

This claim is convenient for the computation of the $R_{(n)}(p)$ power series.
It implies that the term $p^j$ is only present in those polynomials $\rho^{(0)} \cdot M_{(n)}^k \cdot J^T$ for which $\lceil\frac{j-n}{3}\rceil \leq k \leq j$. To compute the power-series coefficient $a^{(n)}_j$ it is sufficient to consider this finite set of polynomials. In other words, in order to compute $R_{(n)}(p)$ up to $k$-th order in $p$, it suffices to consider only the first $k$ steps of the DBS process.
We use this observation to compute the coefficients of the $n\geq 18$ series by computing matrix powers symbolically in $p$, see Table~\ref{tab:coeffs}.

\begin{claim}\label{claim:absconvergence}
    There is a constant $\delta>0$ such that, for any polynomial $f(k)$, the following series is absolutely convergent for $p \in [0,\delta]$:
\begin{align*}
    \sum_{k=0}^{\infty} \sum_{A\subseteq [n]} \sum_{\xi\in\paths{A,k}} f(k) \absp{\P(\xi)} < \infty .
\end{align*}
\end{claim}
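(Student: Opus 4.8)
The plan is to bound the triple sum by a single geometric series in $p$. The key estimates come from Claim~\ref{claim:finitecontribution}: for every $\xi \in \paths{A,k}$ we have $\maxdeg(\P(\xi)) \leq c' k + \maxdeg(\P(A))$, and crucially $\mindeg(\P(\xi)) \geq c(k-|A|) + \mindeg(\P(A))$. Since $|A| \leq n$ is bounded on a fixed finite graph, this means every path of length $k$ contributes a polynomial all of whose monomials have degree at least $c k - cn$ (up to the additive constant from $\mindeg(\P(A)) \geq 0$). So for $p \in [0,1]$ we get $\absp{\P(\xi)}(p) \leq C_\xi\, p^{\max(0,\,ck - cn)}$, where $C_\xi = \absp{\P(\xi)}(1)$ is the sum of absolute values of the coefficients — but we can be cruder: bound $\absp{\P(\xi)}(p) \leq \absp{\P(\xi)}(1)$ only after pulling out the $p$-power, or better, note $\absp{\P(\xi)}(p) \leq p^{\mindeg} \cdot \absp{\P(\xi)/p^{\mindeg}}(1)$.

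First I would count paths. The number of paths in $\paths{A,k}$ is at most $(|V| \cdot 2^{d_{\max}+1})^k$: at each of the $k$ steps we choose a selected vertex (at most $|V|$ choices) and a resample outcome $R_i \subseteq \Gamma(v_i)$ (at most $2^{d_{\max}+1}$ choices). Call this bound $B^k$ with $B = |V| 2^{d_{\max}+1}$. Next I would bound the per-path weight: by the triangle inequality applied to the factorization $\P(\xi) = \P(A_0)\prod_{t=1}^k \P((v_t,R_t)\mid A_{t-1})$, we have $\absp{\P(\xi)}(p) \leq \absp{\P(A_0)}(p)\prod_t \absp{\P((v_t,R_t)\mid A_{t-1})}(p)$. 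Each local-update polynomial has finitely many possibilities over the whole process, so there is a uniform constant $W$ bounding $\absp{\P((v,R)\mid A')}(1) \leq W$, and since each such polynomial is $\bigO{p}$ when it corresponds to a fresh activation or a no-change step, but more simply: let $W = \max \absp{\cdot}(1)$ over the finitely many update polynomials. Then $\absp{\P(\xi)}(p) \leq W^{k+1} p^{\mindeg(\P(\xi))} \leq W^{k+1} p^{\max(0, ck-cn)}$ for $p \in [0,1]$.

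Putting it together, the triple sum is at most
\begin{align*}
    \sum_{k=0}^\infty \sum_{A\subseteq [n]} \sum_{\xi \in \paths{A,k}} f(k)\, W^{k+1} p^{\max(0,ck-cn)}
    \leq \sum_{k=0}^\infty f(k)\, 2^n B^k W^{k+1} p^{\max(0,ck-cn)} .
\end{align*}
For $k \leq n$ the powers of $p$ are $p^0 = 1$, so these finitely many terms form a finite sum (bounded since $f$ is a polynomial and there are at most $n+1$ of them). For $k > n$, the summand is at most $f(k)\, 2^n W\, (BW)^k\, p^{c(k-n)} = \left(f(k)\, 2^n W p^{-cn}\right)(BW p^c)^k$, and choosing $\delta > 0$ small enough that $BW\delta^c < 1$ makes the tail a convergent series (polynomial times geometric with ratio $< 1$). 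Hence the whole series converges for $p \in [0,\delta]$, with $\delta = \min(1, (BW)^{-1/c}/2)$ say.

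The main obstacle — really the only subtlety — is making sure the lower bound on $\mindeg$ genuinely grows linearly in $k$ uniformly over all paths and all $A$; this is exactly the content of Claim~\ref{claim:finitecontribution}, and it relies on PLUP property~\ref{prop:locUpdateRule} (no-change and fresh-activation steps each cost at least one power of $p$), which is why that property was imposed. Once that linear-in-$k$ gap is in hand, dominating by a geometric series is routine. One should also double-check that $W$ can be taken finite and uniform: this holds because a PLUP has finitely many distinct local-update rules and initialization polynomials on a fixed finite graph, so there are only finitely many distinct polynomials $\P((v,R)\mid A')$ appearing, and we take the max of their $\absp{\cdot}(1)$ values.
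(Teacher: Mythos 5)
Your proposal is correct and follows essentially the same route as the paper's own proof: factor $\P(\xi)$ into the initialization and update polynomials, invoke Claim~\ref{claim:finitecontribution} for the linear-in-$k$ lower bound on $\mindeg(\P(\xi))$, use the finiteness of the set of update polynomials to get a uniform constant bounding $\absp{\cdot}(1)$, count paths by an exponential in $k$, and dominate the whole sum by a (polynomial times) geometric series with ratio proportional to $p^c$. The only cosmetic difference is that you split off the finitely many terms with $k\leq n$ to avoid negative exponents of $p$, where the paper simply carries the factor $p^{c(k-|A|)}$ and absorbs the finitely many starting states at the end.
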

Note that the sum is over all paths, not only the terminating ones.

\begin{proof}
    We have $\P(\xi) = \P(A_0) \P( (v_1,R_1) \mid A_0) \P( (v_2,R_2) \mid A_1) \cdots \P( (v_k,R_k) \mid A_{k-1})$.
    The polynomials $P_t := \P((v_t,R_t)\mid A_{t-1})$ come from a finite set of polynomials: for each vertex $v$ there are at most $2^{|\Gamma(v)|}$ possible updates and there are at most $n$ vertices.
    Therefore there is a constant $C$ such that for all these polynomials
    \begin{align*}
        \absp{P_t} \leq C \; p^{\mindeg(P_t)} .
    \end{align*}
    By Claim~\ref{claim:finitecontribution} there is a $c$ such that
    \begin{align*}
        \absp{\P(\xi)} \leq \absp{\P(A_0)} \absp{P_1} \cdots \absp{P_k} &\leq \absp{\P(A_0)} C^k \; p^{\mindeg(P_1) +\cdots \mindeg(P_k)} \\
        &\leq \absp{\P(A)} C^k p^{c\cdot(k-|A|)} .
    \end{align*}
    There are at most $(2^{d_\mathrm{max}}n)^k$ paths of length $k$ for a fixed starting state so we have
    \begin{align*}
        \sum_{k=0}^{\infty} \sum_{A\subseteq [n]} \sum_{\xi\in\paths{A,k}} f(k) \absp{\P(\xi)}
        &\leq \sum_{k=0}^{\infty} \sum_{A\subseteq [n]} f(k) \absp{\P(A)} (2^{d_\mathrm{max}}n)^k C^k p^{c(k-|A|)}
    \end{align*}
    Since there are finitely many ($2^n$) starting states $A$, the whole expression is absolutely convergent for $p < \(2^{d_\mathrm{max}}n C\)^{-1/c}$.
\end{proof}
Since any local event is a subset of the set of all terminating paths, the powerseries for any event is also absolutely convergent. This yields the following corollaries.
\begin{corollary} \label{cor:pathsevents}
    Let $E$ be an event such that $\P(\xi) = \bigO{p^k}$ for all paths $\xi\in E$. Then $\P(E) = \bigO{p^k}$.
\end{corollary}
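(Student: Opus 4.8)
The plan is to reduce the statement to the absolute-convergence result of Claim~\ref{claim:absconvergence}, which is precisely what licenses the rearrangement of an infinite sum of polynomials into a power series with well-defined coefficients. First I would recall that, up to the probability-zero event of non-termination, every local event $E$ is a (countable) disjoint union of terminating paths, so $\P(E) = \sum_{\xi \in E}\P(\xi)$, where each $\P(\xi)$ is a polynomial in $p$ and, by hypothesis, $\P(\xi) = \bigO{p^k}$, i.e.\ $\mindeg(\P(\xi)) \geq k$ for every $\xi \in E$. The goal is to show that when the whole sum is reorganized as a single power series $\sum_{j\geq 0}\alpha_j p^j$, we get $\alpha_j = 0$ for all $j < k$.

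The key step is to justify the interchange of summation and coefficient-extraction. By Claim~\ref{claim:absconvergence} (applied with $f\equiv 1$), there is a $\delta>0$ such that $\sum_{m\geq 0}\sum_{A\subseteq[n]}\sum_{\xi\in\paths{A,m}}\absp{\P(\xi)}$ converges for $p\in[0,\delta]$; since $E$ is contained in the set of all terminating paths, which is contained in the set of all paths, the subsum $\sum_{\xi\in E}\absp{\P(\xi)}$ converges on $[0,\delta]$ as well. Absolute convergence of the series of polynomials on a neighborhood of $0$ means we may expand each $\P(\xi) = \sum_{j\geq \mindeg(\P(\xi))}\alpha_j^{(\xi)}p^j$ and sum coefficientwise: the double series $\sum_{\xi\in E}\sum_j |\alpha_j^{(\xi)}|p^j$ is dominated termwise by $\sum_{\xi\in E}\absp{\P(\xi)}$, which is finite, so by Fubini/Tonelli for nonnegative terms (or Weierstrass's theorem on uniformly convergent series of analytic functions) $\P(E)$ is analytic on $(-\delta,\delta)$ and its $j$-th power-series coefficient is $\alpha_j = \sum_{\xi\in E}\alpha_j^{(\xi)}$.

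Finally I would conclude: for $j < k$, each $\xi\in E$ has $\mindeg(\P(\xi))\geq k > j$, hence $\alpha_j^{(\xi)} = 0$ for every $\xi$, and therefore $\alpha_j = \sum_{\xi\in E}\alpha_j^{(\xi)} = 0$. This is exactly the statement $\P(E) = \bigO{p^k}$. The only subtle point — and the part that genuinely needs the appendix machinery rather than being a one-line manipulation — is the legitimacy of the coefficientwise summation of infinitely many polynomials; everything else is bookkeeping. I expect no further obstacle, since Claim~\ref{claim:absconvergence} was stated in exactly the generality needed (arbitrary polynomial weights $f(k)$, all paths rather than just terminating ones), so the corollary is essentially a direct specialization.
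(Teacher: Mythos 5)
Your argument is correct and is essentially the paper's own proof: both rest on Claim~\ref{claim:absconvergence} to justify the coefficientwise rearrangement of $\sum_{\xi\in E}\P(\xi)$ into a single power series, from which the vanishing of all coefficients below order $k$ is immediate. You have merely spelled out the Fubini/Tonelli and uniqueness-of-power-series steps that the paper states in one line.
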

\begin{proof}
    For $p \in [0,\delta]$ we have $\P(E) = \sum_{j=k}^\infty a_j p^j$ by Claim~\ref{claim:absconvergence}. By uniqueness of power series, this equality holds for all $p$ up to the radius of convergence.
\end{proof}
\begin{corollary}
    Let $A_0\subseteq [n]$ be any state and let $E$ be an event such that for all paths $\xi\in E$ we have $\P(\xi \mid \text{start }A_0) = \bigO{p^k}$. Then $\P(E \mid \text{start }A_0) = \bigO{p^k}$.
\end{corollary}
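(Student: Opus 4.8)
The plan is to follow the proof of Corollary~\ref{cor:pathsevents} almost verbatim, the only genuinely new ingredient being an absolute-convergence statement for the process \emph{conditioned} on starting in the fixed state $A_0$. First I would record that for a path $\xi=\big((\text{initialize to }A_0),(v_1,R_1),\dots,(v_k,R_k)\big)\in\paths{A_0,k}$ the conditional probability $\P(\xi\mid\text{start }A_0)=\prod_{t=1}^{k}\P\big((v_t,R_t)\mid A_{t-1}\big)$ is an honest polynomial in $p$: it is what remains of $\P(\xi)=\P(A_0)\cdot\P(\xi\mid\text{start }A_0)$ after the initialization factor $\P(A_0)$ is cancelled, and since that factor divides $\P(\xi)$ exactly, no negative powers of $p$ appear. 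In particular the hypothesis $\P(\xi\mid\text{start }A_0)=\bigO{p^k}$ just says this polynomial has no monomial of degree $<k$.

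Next I would re-run the proof of Claim~\ref{claim:absconvergence} with the initialization factor omitted. The one-step polynomials $P_t:=\P\big((v_t,R_t)\mid A_{t-1}\big)$ still range over a finite family, so $\absp{P_t}(p)\le C\,p^{\mindeg(P_t)}$ for a single constant $C$ and all $p\in[0,1]$; and the $\mindeg$ bound in the proof of Claim~\ref{claim:finitecontribution}, which uses only PLUP property~\ref{prop:locUpdateRule} and not the initialization property~\ref{prop:indepInit}, gives $\mindeg\big(\P(\xi\mid\text{start }A_0)\big)=\sum_{t}\mindeg(P_t)\ge c\,(k-|A_0|)$ with $c=1/(d_{\mathrm{max}}+1)$. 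Combined with the bound of at most $(2^{d_{\mathrm{max}}}n)^k$ paths of length $k$ starting from the fixed state $A_0$, this yields, for every polynomial $f$,
\[
  \sum_{k=0}^{\infty}\sum_{\xi\in\paths{A_0,k}} f(k)\,\absp{\P(\xi\mid\text{start }A_0)}(p)
  \;\le\;\sum_{k=0}^{\infty} f(k)\,(2^{d_{\mathrm{max}}}nC)^{k}\,p^{\,c(k-|A_0|)}
  \;<\;\infty
\]
for all $p\in[0,\delta)$ with $\delta:=\min\big(1,(2^{d_{\mathrm{max}}}nC)^{-1/c}\big)$, the finitely many terms with $k<|A_0|$ being polynomials and hence harmless.

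From here the corollary is immediate, exactly as in Corollary~\ref{cor:pathsevents}. Since $p<1$ implies almost-sure termination, $\P(E\mid\text{start }A_0)=\sum_{\xi\in E}\P(\xi\mid\text{start }A_0)$, and for $p\in[0,\delta)$ this is a coefficientwise absolutely convergent sum of polynomials (the displayed bound with $f\equiv1$, noting $E$ consists of terminating paths from $A_0$), so it may be regrouped by degree into a power series $\sum_{j\ge0}\alpha_j p^j$ with $\alpha_j$ the sum over $\xi\in E$ of the coefficient of $p^j$ in $\P(\xi\mid\text{start }A_0)$. By hypothesis every such coefficient vanishes for $j<k$, so $\alpha_j=0$ for $j<k$, i.e. $\P(E\mid\text{start }A_0)=\bigO{p^k}$; by uniqueness of power-series expansions this persists throughout the disk of convergence. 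The one place to be careful — and the step I expect to need the most attention — is the second one: checking that the convergence proof of Claim~\ref{claim:absconvergence} really uses only the update-rule axiom, so that it transfers to the conditioned (deterministically initialized) process, which is not itself a PLUP.
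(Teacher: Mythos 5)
Your proposal is correct and follows essentially the same route as the paper, which likewise proves the corollary by observing that the arguments of Claim~\ref{claim:absconvergence} and Corollary~\ref{cor:pathsevents} go through verbatim once the sum over starting states $A$ and the factors $\absp{\P(A)}$ are dropped. Your extra care in checking that the $\mindeg$ bound of Claim~\ref{claim:finitecontribution} relies only on the update-rule property~\ref{prop:locUpdateRule} (and not on the initialization property) is exactly the point the paper leaves implicit.
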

\begin{proof}
    The proofs of Claim \ref{claim:absconvergence} and Corollary \ref{cor:pathsevents} also hold when everything is conditioned on a fixed starting state $A_0\subseteq [n]$. The sum over $A$ and the factors $\absp{\P(A)}$ are simply removed from the equations.
\end{proof}

\section{Absolute convergence for the $q$ power series} \label{apx:absqconvergence}

We now turn our attention to the $S_{[n]}(q)$ series defined in Equation \eqref{eq:Qseries}. This process starts with a single active vertex at position $1$, i.e. $A=\{1\}$, and we look at the probability that vertex $n$ is never activated, $\P(\RI{\{n\}} \mid \text{start } A)$, as a function of $q=1-p$. To prove the absolute convergence of such series for general PLUPs we require more properties than the definition of PLUP that we have. We now consider the update polynomials as a function of $q=1-p$. The update rule for a single time step should satisfy the following two properties.
\begin{itemize}
    \item When $q=0$ then the probability that an active vertex becomes inactive is zero.\\
        This implies that any inactivation is $\bigO{q}$.
    \item There is a $c>0$ such that if $q=0$ then: for all vertices $v$ with an active neighbor, the probability of activating $v$ when that neighbor is selected is at least $c$.
\end{itemize}
These properties are satisfied by the CP and DBS process.

\begin{claim} \label{claim:absqconvergence}
    Consider a PLUP that satisfies the above two properties.
    Let $X \subset V$ be any subset of vertices such that its boundary $B=\bar{\partial}(X;1)$ is not empty. Let the starting state be $A \subseteq X$. Then the following series converges for small enough $q$
    \begin{align*}
        \sum_{k\geq 0} \sum_{\xi \in \tpaths{A,k} \cap \RI{B}} \absp{\P(\xi)}(q) < \infty .
    \end{align*}
    Therefore one can regroup terms in the series $\P(\RI{B}) = \sum_{k\geq 0}\sum_{\xi\in\tpaths{A,k}\cap\RI{B}} \P(\xi)(q)$.
\end{claim}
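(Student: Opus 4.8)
The plan is to adapt the path‑counting argument of Claim~\ref{claim:absconvergence} to the variable $q$. The new difficulty is that a step of a terminating path carries a power of $q$ only when it \emph{deactivates} a vertex, so a terminating path may contain arbitrarily many ``idle'' steps (those leaving the state unchanged, i.e.\ self‑loops of the chain) that carry no power of $q$ at all; the two extra hypotheses are exactly what is needed to control those. We may assume $A\neq\emptyset$. I would first observe that on the event $\RI{B}$, starting from $A\subseteq X$, the active set stays inside the finite set $X_0$, defined as the union of the connected components of $G\setminus B$ that meet $A$: by induction on the step count, resampling $\Gamma(v)$ with $v\in X_0$ and with $B$ not activated turns on only vertices of $\Gamma(v)\setminus B\subseteq X_0$. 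Hence every $\xi\in\tpaths{A,k}\cap\RI{B}$ is a path on the finite state space $2^{X_0}$, and -- assuming $G$ connected, as in all our applications, e.g.\ $G=[n]$ -- the set $X_0$ is adjacent to $B$ with $B\cap X_0=\emptyset$; this is the only way the hypothesis $B\neq\emptyset$ is used.

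Next I would classify the steps of such a path as \emph{deactivating} (some active vertex is turned off), \emph{fresh} (not deactivating, but some vertex is turned on, so $|A_t|$ strictly increases), or \emph{idle} (state unchanged). By the first extra hypothesis a deactivating step satisfies $\P((v_t,R_t)\mid A_{t-1})=\bigO{q}$, hence $\absp{\P((v_t,R_t)\mid A_{t-1})}(q)\le Cq$ for a constant $C$, since the update polynomials come from a finite family. A non‑deactivating step never decreases $|A_t|$, so between consecutive deactivating steps the active set is non‑decreasing; a path with $D$ deactivating steps thus has at most $(D+1)|X_0|$ fresh steps. The strategy is then to fix the ``skeleton'' of $\xi$ -- the sequence of pairs $(v_t,R_t)$ at its non‑idle steps, of which there are at most $(\mathrm{const})^{D}$ with $D$ deactivating steps (finitely many choices per step, at most $(D+1)(|X_0|+1)$ of them), and a skeleton determines the sequence of visited states -- and then sum over the number of idle steps inserted at each visited state. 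For a state $A^{*}$, a single idle step has total weight $\sigma_{A^{*}}(q):=\sum_{v\in A^{*}}\absp{\P((v,A^{*}\cap\Gamma(v))\mid A^{*})}(q)$ (for a selected $v$ there is a unique idle outcome), so the idle steps at $A^{*}$ contribute the geometric sum $1/(1-\sigma_{A^{*}}(q))$, provided $\sigma_{A^{*}}(q)<1$.

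The crux is to show $\sigma_{A^{*}}(q)<1$ near $q=0$ for every non‑empty $A^{*}\subseteq X_0$; this is where the \emph{second} extra hypothesis enters. If $v\in A^{*}$ has a neighbour $u\notin A^{*}$ (in $X_0\setminus A^{*}$ or in $B$), then at $q=0$ the probability of not activating $u$ when $v$ is selected is at most $1-c$, so $v$ contributes at most $(1-c)\,w_v/\sum_u w_u$ to $\sigma_{A^{*}}(0)$, while a vertex whose closed neighbourhood lies in $A^{*}$ contributes at most $w_v/\sum_u w_u$; summing, $\sigma_{A^{*}}(0)\le 1-c\sum_{v\in A^{*}:\,\Gamma(v)\not\subseteq A^{*}} w_v/\sum_u w_u$. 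Since $X_0$ is connected and adjacent to $B$ with $B\cap X_0=\emptyset$, every non‑empty $A^{*}\subseteq X_0$ has at least one vertex $v$ with $\Gamma(v)\not\subseteq A^{*}$ (by connectedness if $A^{*}\subsetneq X_0$, by adjacency to $B$ if $A^{*}=X_0$), hence $\sigma_{A^{*}}(0)<1$. As $\sigma_{A^{*}}$ is a polynomial, $\sigma_{A^{*}}(q)<1$ on a common interval $[0,\delta_1]$, on which the finitely many factors $1/(1-\sigma_{A^{*}}(q))$ are bounded by a constant $\Sigma_0$.

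Assembling the pieces, $\sum_{k}\sum_{\xi\in\tpaths{A,k}\cap\RI{B}}\absp{\P(\xi)}(q)$ is at most $\absp{\P(A_0)}(1)$ times $\sum_{D\ge0}(\mathrm{const})^{D}\,(Cq)^{D}\,C^{(D+1)|X_0|}\,\Sigma_0^{(D+1)|X_0|+D+1}$, which is $\mathrm{const}\cdot\sum_{D\ge0}(K\Sigma_0^{\,|X_0|+1}q)^{D}$ for a constant $K$, and hence finite once $q<\min(\delta_1,(K\Sigma_0^{|X_0|+1})^{-1})$. Since $\absp{\P(\xi)}(q)\ge|\P(\xi)(q)|$ for $q\ge0$, the series $\P(\RI{B})=\sum_{k\ge0}\sum_{\xi\in\tpaths{A,k}\cap\RI{B}}\P(\xi)(q)$ converges absolutely on such an interval, so its terms may be regrouped into a power series in $q$ which converges there and therefore, by uniqueness, equals $\P(\RI{B})$ up to its radius of convergence -- exactly as in the proof of Corollary~\ref{cor:pathsevents} in Appendix~\ref{apx:absconvergence}. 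The only genuinely delicate point, I expect, is the treatment of the idle self‑loops by geometric resummation, together with the use of the second hypothesis (and $X_0$'s adjacency to $B$) to keep the resummed self‑loop weights $\sigma_{A^{*}}$ strictly below $1$; everything else is the same finite‑family / finite‑graph bookkeeping as in Appendix~\ref{apx:absconvergence}.
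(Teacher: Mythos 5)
Your proof is correct, but it takes a genuinely different route from the paper's. The paper first renormalizes the absolute-value polynomials $\absp{Q_{i,j}}$ into a bona fide stochastic ``tilde process'' (at the cost of a factor $(1+\bigO{q})^k$), and then runs a probabilistic argument: by the second extra hypothesis each step activates some vertex with probability at least $s\cdot c'$, so by a coupling with i.i.d.\ Bernoulli variables and a Chernoff bound a $k$-step path has $\Omega(k)$ activations --- hence $\Omega(k)-|X|$ inactivations and a factor $q^{\Omega(k)}$ --- except on an event of probability $e^{-\Omega(k)}$, and summing over $k$ gives convergence. You instead argue deterministically and combinatorially: you bound the number of fresh (strictly activating) steps by $(D+1)|X_0|$ in terms of the number $D$ of deactivating steps, extract $(Cq)^D$ from the deactivations, and handle the true obstruction --- the idle self-loops, which carry no power of $q$ --- by geometric resummation, using the second hypothesis to keep each self-loop weight $\sigma_{A^*}(0)$ strictly below $1$. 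The two arguments use the extra hypotheses in the same roles, and both need that every reachable non-empty active configuration contains an active vertex with an inactive neighbour: you state the required connectedness explicitly, while the paper's assertion that ``an active vertex with inactive neighbor is picked with probability at least $s$'' needs it implicitly. Your version is more elementary (no renormalization, coupling, or Chernoff bound) and pinpoints the self-loops as the precise source of difficulty; the paper's version avoids the skeleton bookkeeping. One small point to tighten: in your case analysis for $\sigma_{A^*}(0)<1$, when $A^*\subsetneq X_0$ is a union of entire components of $G\setminus B$ the ``connectedness'' clause does not apply, but the ``adjacency to $B$'' clause does (every component of $X_0$ is adjacent to $B$ when $G$ is connected and $B\neq\emptyset$), so the conclusion stands.
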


\begin{proof}
    Let $Q_{i,j}(q)$ be the $j$-th transition polynomial for vertex $i$, so that $\sum_j Q_{i,j}(q) = 1$. (In case of the DBS process these are of the form $q^a(1-q)^b$ and independent of which vertex is being resampled.) Since this holds for $q=0$, the constant terms of $Q_{i,j}(q)$ are non-negative and sum to 1. Hence we have $\sum_j \absp{Q_{i,j}}(q) = 1 + \bigO{q}$.
    Define new normalized functions $\tilde{Q}_{i,j}(q) = \frac{ \absp{Q_{i,j}}(q) }{ \(\sum_j \absp{Q_{i,j}}(q) \)}$ and consider the same process but with the transition polynomials $Q_{i,j}(q)$ replaced by the functions $\tilde{Q}_{i,j}(q)$. Whenever $Q_{i,j}(q) = \bigO{q}$ then also $\tilde{Q}_{i,j}(q) = \bigO{q}$.

    By the second additional property there is a $c$ such that any neighbor of an active vertex can be activated with probability at least $c$ when $q=0$, so the constant term in the corresponding transition polynomials is at least $c$. When taking the absolute polynomials $\absp{Q_{i,j}}(q)$ this also holds for non zero $q$. The functions $\tilde{Q}_{i,j}(q)$ are then at least $(c + \bigO{q} ) / (1 + \bigO{q})$ so there is a $q_0>0$ such that $\tilde{Q}_{i,j}(q) \geq c' = c/2$ for all $0\leq q \leq q_0$.
    
    Define the following random variables. Let $I^B_i \in \{0,1\}$ be 1 if any active vertex got inactivated in step $i$ \textit{or} if the process has terminated \textit{or} if a vertex in $B$ has been active at some point before step $i$. Let $G^B_i \in \{0,1\}$ be 1 if any inactive vertex got activated in step $i$ \text{or} if the process has terminated \textit{or} if a vertex in $B$ has been active at some point before step $i$.
    Let $I^B_{\leq k}=\sum_{i=1}^k I^B_i$ and $G^B_{\leq k} = \sum_{i=1}^k G^B_i$. We always have $G^B_{\leq k} \leq |X| + I^B_{\leq k}$ since after $|X|$ activations any other activation requires an inactivation first.

    Define $s = \frac{\min_{v\in V} w_v}{\sum_{v\in V} w_v}$ where the $w_v$ are the selection weights of the PLUP. (For the DBS process $s=\frac{1}{n}$). This is unchanged in the process with the $\tilde{Q}_{i,j}(q)$ transition functions.
    Now we show that
\begin{claim} \label{claim:coupling}
    The random variable $G^B_{\leq k}$ satisfies
    \begin{align*}
        \tilde{\P}\left(G^B_{\leq k} \leq \frac{s\cdot c'}{2} k\right) \leq \exp\(- s c' k / 8\) .
    \end{align*}
\end{claim}
\begin{proof}
    By definition, if there are no active or no inactive vertices in step $i$ then $G^B_i =1$. If there are, then an active vertex with inactive neighbor is picked with probability at least $s$ (this holds for both the original process and the process with $\tilde{Q}_{i,j}(q)$ transition functions) and the neighbor is then activated with probability at least $c'$. Therefore we have $\tilde{\P}(G^B_i = 1) \geq s\cdot c' > 0$.

    Define $k$ i.i.d. Bernoulli variables $C_i$ with success probability $s\cdot c'$ and $C=\sum_{i=1}^k C_i$. The expectation of $C$ is $\E(C) = s c' k$ and using the Chernoff bound we can bound the probability that $C$ deviates far from its mean:
    \begin{align*}
        \P\(C \leq \frac{1}{2} \E(C) \) \leq e^{- \frac{1}{4} \E(C)}
    \end{align*}
    We use a coupling argument to compare the $G^B_i$ variables with the $C_i$'s.
    Let $U_i$ be i.i.d. uniform $[0,1]$ variables.  Define $C_i$ to be $1$ if $U_i < s c'$ so the $C_i$'s are indeed i.i.d. Bernoulli variables with the correct distribution.

    The probability $\P(G^B_i = 1)$ depends on the history of the process. Run the process and in each step of the process, first compute the true probability $p_i = \P(G^B_i = 1 \mid \text{history})$, so $p_i > s c'$. Now instead of running the process normally, define $G^B_i=1$ if and only if $U_i < p_i$. Then continue the process conditioned on the value of $G^B_i$ so it follows the normal distribution.
    This way, the $G^B_i$ variables come from the correct distribution but they are coupled to the $C_i$'s. We see $C_i =1$ implies $G^B_i=1$ so $G^B_{\leq k} \geq C$ and therefore the Chernoff bound applies to $G^B_{\leq k}$ as well.
\end{proof}
    Now we continue the proof of Claim~\ref{claim:absqconvergence}.

    By the first property every inactivation has an update step that is $\bigO{q}$ (even with the $\tilde{Q}_{i,j}$ functions). Every path of length $k$ has $I^B_{\leq k}$ inactivations so has probability $\bigO{q}^{I^B_{\leq k}}$.
    Let $T_k$ be the event that the process takes exactly $k$ steps. For the ``tilde process'' we have
    \begin{align*}
        \tilde{\P}( \RI{B} \cap T_k) &= \tilde{\P}(\RI{B} \cap T_k \cap (G^B_{\leq k} \leq \frac{s\cdot c'}{2}k)) + \tilde{\P}(\RI{B} \cap T_k \cap (G^B_{\leq k} > \frac{s\cdot c'}{2}k)) \\
                             &\leq \exp(-\frac{s\cdot c'}{8} k) + \bigO{q}^{\frac{s\cdot c'}{2}k-|X|}
    \end{align*}
    Note that the constants in all big-O parts are independent of the number of steps.
    We have
    \begin{align*}
              \sum_{k\geq 0} \sum_{\xi \in \tpaths{A,k} \cap \RI{B}} \absp{\P(\xi)}(q)
        &\leq \sum_{k\geq 0} (1 + \bigO{q})^k \sum_{\xi \in \tpaths{A,k} \cap \RI{B}} \tilde{\P}(\xi)(q) \\
        &\leq \sum_{k\geq 0} (1 + \bigO{q})^k \( \exp(-\frac{s\cdot c'}{8} k) + \bigO{q}^{\frac{s\cdot c'}{2}k-|X|} \) .
    \end{align*}
    This is convergent for small enough $q$.
\end{proof}

\section{Proving that $a_k^{(k+1)}=a_k^{(n)}$ for all $n>k$}\label{apx:preciseStabilization}
In the main text we looked at $R_{(n)}(p)$ and saw that the coefficients of this series stabilize. Our main theorem, however, only shows that this stabilization happens for $n > 2k$ since any new vertex added by going from $n$ to $n+1$ is at distance at most $n/2$ from all existing vertices. In this section we prove the stronger statement, namely that the coefficients stabilize for $n > k$.
Let $$P_{C}:=\RI{\overline{\partial}C}\cap\bigcap_{v\in C}\BA{\{v\}}$$ be the event that every vertex in $C$ becomes active, and the boundary remains inactive. If $P_{C}$ holds, we say $C$ is a patch of the activations.

The intuition of the following theorem is similar to that of Corollary~\ref{cor:decay}. A site can only realize the length of the cycle after an interaction chain was formed around the cycle, implying that every vertex was activated at least once.

\begin{theorem} $R_{(n)}=\E_{[-m,m]}(\Res{0})+\bigO{p^{n}}$ for all $m\geq n \geq 3$, thus
	$R_{(n)}-R_{(m)}=\bigO{p^{n}}$.
\end{theorem}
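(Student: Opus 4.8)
The plan is to combine the patch decomposition (the events $P_C$ just defined) with the Power Light Cone, exploiting that on a cycle a vertex only ``feels'' the cyclic topology after a chain of activations has wrapped all the way around, which forces all $n$ vertices to become active and hence has probability $\bigO{p^n}$ by Lemma~\ref{lemma:activationpower}. By translation invariance $R_{(n)} = \E_{(n)}(\Res{0})$, so it suffices to prove $\E_{(n)}(\Res{0}) - \E_{[-m,m]}(\Res{0}) = \bigO{p^n}$. On each graph, split the probability space by the connected component $C$ of $0$ in the set of vertices that ever become active; if $\RI{\{0\}}$ holds then $\Res{0}=0$ and the outcome contributes nothing, and otherwise exactly the event $P_C = \RI{\overline{\partial}C}\cap\bigcap_{v\in C}\BA{\{v\}}$ holds for $C$ equal to that component. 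On the cycle the possible $C$ are the arcs through $0$ of length $\ell\in\{1,\dots,n-1\}$ together with the whole cycle; on $[-m,m]$ they are the subintervals containing $0$. The whole-cycle patch, and every chain patch of size $\ge n$, force $\ge n$ fixed vertices into $\bigcap_v\BA{\{v\}}$, so by Lemma~\ref{lemma:activationpower} — absorbing the weight $\Res{0}$ (bounded by the number of steps) via the absolute convergence of Claim~\ref{claim:absconvergence} — each such patch contributes $\bigO{p^n}$; there are finitely many, so in total $\bigO{p^n}$. It remains to match, interval by interval, the contribution of $P_C$ for each interval $C=\{-a,\dots,b\}\ni 0$ of length $\ell = a+b+1\le n-1$, which is a valid patch on both graphs (its boundary lies strictly inside $[-m,m]$ since $m\ge n$).

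For a fixed such $C$ I would use the DBS-specific fact that conditioning on $\RI{\overline{\partial}C}$ turns the process on $C$ into the plain DBS process on the path $C$: a vertex pinned inactive never resamples, hence cannot transmit anything from outside $C$, and because the DBS update resamples its (up to) three vertices independently, an endpoint of $C$ whose exterior neighbour is pinned inactive behaves exactly like a path endpoint. Writing $\overline{\partial}_{(n)}C$ and $\overline{\partial}_{[m]}C$ for the boundary of $C$ in the cycle and in the chain, this gives
\begin{align*}
    \E_{(n)}(\Res{0}\cdot\mathbbm{1}[P_C]) &= \P_{(n)}\!\bigl(\RI{\overline{\partial}_{(n)}C}\bigr)\cdot g_{a,b}, \\
    \E_{[-m,m]}(\Res{0}\cdot\mathbbm{1}[P_C]) &= \P_{[-m,m]}\!\bigl(\RI{\overline{\partial}_{[m]}C}\bigr)\cdot g_{a,b},
\end{align*}
where $g_{a,b} := \E_{C}\!\bigl(\Res{0}\cdot\mathbbm{1}[\bigcap_{v\in C}\BA{\{v\}}]\bigr)$ (DBS on the path $C$) is the same number on both sides, the arcs being isomorphic as rooted paths, and equals $\bigO{p^\ell}$ by Lemma~\ref{lemma:activationpower}. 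So the contribution of $C$ to $\E_{(n)}(\Res{0})-\E_{[-m,m]}(\Res{0})$ is $g_{a,b}\cdot\bigl[\P_{(n)}(\RI{\overline{\partial}_{(n)}C}) - \P_{[-m,m]}(\RI{\overline{\partial}_{[m]}C})\bigr]$, and it suffices to show the bracket is $\bigO{p^{\,n-\ell}}$: then every interval contributes $\bigO{p^n}$, summing over the finitely many (at most $n^2$) intervals gives $\bigO{p^n}$, and the statement follows; applying it to two cycle lengths with one common chain half-length then yields $R_{(n)}-R_{(m)}=\bigO{p^n}$.

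Bounding the bracket splits into two regimes. If $\ell\le n-3$ the cyclic boundary $\overline{\partial}_{(n)}C$ is two non-adjacent vertices that separate the cycle into $C$ and a non-empty complementary arc; the Splitting Lemma~\ref{lemma:splitting} factors $\P_{(n)}(\RI{\overline{\partial}_{(n)}C})$ as a term on $C\cup\overline{\partial}C$ (identical on both graphs, being DBS on the same $(\ell+2)$-vertex path) times $\P_{(n)\setminus C}(\RI{\overline{\partial}C}\mid\II{\overline{\partial}C})$, and likewise on the chain; the two ``outer'' factors differ by $\bigO{p^{\,n-\ell}}$, since on the cycle the two boundary vertices sit at the ends of a path of $n-\ell$ vertices and are $\bigO{p^{\,n-\ell}}$-close to independent by the correlation bound~\eqref{eq:RICovariance}, while each single-vertex marginal $\P(\RI{\{v\}})$ differs from its chain counterpart by $\bigO{p^{\,n-\ell}}$ via graph surgery (Lemma~\ref{lemma:distancePower}), the removed vertices lying at distance $\ge n-\ell$ from $v$. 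If instead $\ell\in\{n-2,n-1\}$ — the crux, where the cyclic boundary (a single vertex, or two adjacent vertices) fails to separate the cycle — I would use that the first two power-series coefficients of $\P_G(\RI{S})$ are universal: $\RI{S}$ fails only when an initial activation ($\bigO{p}$) is followed by a resampling that activates $S$ ($\bigO{p}$), so $\P_G(\RI{S}) = (1-p)^{|S|}(1-\bigO{p^2})$, whence the constant term is $1$ and the linear term is $-|S|$, independent of $G$ and of the location of $S$. For $\ell=n-1$ we have $n-\ell=1$ and only the (equal) constant terms are needed; for $\ell=n-2$ we have $n-\ell=2$ and both boundaries have exactly two vertices, so the constant terms ($1$) and the linear terms ($-2$) agree and the bracket is $\bigO{p^2}$. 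The two genuine obstacles are precisely these points: a clean justification of the conditioning reduction to the path process (which really uses the product form of the DBS update), and the non-separating boundary cases $\ell\in\{n-2,n-1\}$, handled through the universality of the low-order coefficients of $\P(\RI{S})$.
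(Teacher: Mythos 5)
Your overall architecture is the same as the paper's: partition by the patch $C$ containing $0$, discard the patches of size $\geq n$ as $\bigO{p^n}$ via Lemma~\ref{lemma:activationpower}, factor each remaining patch contribution into an ``inner'' factor on $C$ times an ``outer'' $\RI{\overline{\partial}C}$ factor via the Splitting Lemma, and control the outer factor with correlation decay plus graph surgery. Your explicit treatment of the non-separating boundary cases $\ell\in\{n-2,n-1\}$ via the universality of the constant and linear coefficients of $\P_G(\RI{S})$ is correct and in fact more careful than the paper, which pushes those cases through the same chain of lemmas without comment.

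There is, however, one genuine error: the claimed identity $g_{a,b}=\E_{C}\bigl(\Res{0}\cdot\mathbbm{1}[\bigcap_{v\in C}\BA{\{v\}}]\bigr)$ for the \emph{plain} DBS process on the path $C$ is false. Conditioning on $\RI{\overline{\partial}C}$ does more than ``pin'' the exterior neighbours: every resample of an endpoint of $C$ also resamples its boundary neighbour, and requiring that outcome to be inactive contributes a factor $(1-p)$ \emph{per such resample}. Hence the conditional law of the $C$-history $h$ is proportional to $\P_{\mathrm{path}}(h)\,(1-p)^{N(h)}$, where $N(h)$ counts endpoint resamples --- a history-dependent tilt, not the plain path process (e.g.\ for $C=\{0\}$ the conditional law of $\Res{0}$ is biased toward shorter histories). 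Your argument survives because all it actually needs is that the inner factor is (i) identical on the cycle and on $[-m,m]$ and (ii) $\bigO{p^{\ell}}$; both hold for the correctly identified tilted quantity, since each endpoint resample forces exactly one boundary vertex inactive in either graph, so $N(h)$ and hence the tilt agree (this remains true even when the cyclic boundary is a single vertex or two adjacent vertices). The paper sidesteps the issue entirely by never passing to a conditional: it carries the joint probability $\P_{C\cup S}\(\RI{S}\cap L_C\mid \II{S}\)$ on the induced subgraph $C\cup S$, which is manifestly the same polynomial on both graphs. You should either do the same, or replace your $g_{a,b}$ by the conditional expectation itself and prove directly that it is graph-independent and $\bigO{p^{\ell}}$ (the latter because its numerator is $\bigO{p^{\ell}}$ by Lemma~\ref{lemma:activationpower} and its denominator has constant term $1$).
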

\begin{proof} In the proof we identify the sites of the $n$-cycle with the$\mod n$ remainder classes.
    We have $R_{(n)}(p) = \E_{(n)}(\Res{0})$ by translation invariance, and this expectation is equal to $\sum_{k=1}^{\infty}\P_{(n)}(\Res{0}\!\geq\! k)$. We will now rewrite $X = \P_{(n)}(\Res{0}\!\geq\! k)$.
	\begin{align*}
        X
	&= \sum_{\underset{v+w\leq n+1}{v,w\in [n]}}\P_{(n)}(\Res{0}\!\geq\! k \cap \underset{P_{v,w}:=}{\underbrace{P_{[-v\!+\!1,w\!-\!1]}}}) \tag{partition}\\[-1mm]
	&= \sum_{\underset{v+w\leq n}{v,w\in [n]}} \P_{(n)}(\Res{0}\!\geq\! k \cap P_{v,w}) +\bigO{p^{n}}\\[-1mm]
	&= \sum_{\underset{v+w\leq n}{v,w\in [n]}} \P_{[-v,w]}(\Res{0}\!\geq\! k \cap P_{v,w} \mid \II{\{-v,w\}}) \P_{[w,n-v]}(\RI{w,n-v}) +\bigO{p^{n}} \tag{by Lemma~\ref{lemma:splitting}}\\
        &= \sum_{\smash{\underset{v+w\leq n}{v,w\in [n]}}}\P_{[-v,w]}(\Res{0}\!\geq\! k \cap P_{v,w} \mid \II{\{-v,w\}}) \\
    &\qquad\qquad\quad \cdot
        \left[ \left(\P_{[w,n-v]}(\RI{w})\right)^{\!\!2}\!+\!\bigO{p^{n-v-w+1}} \right] +\bigO{p^{n}}
        \tag{Corollary~\ref{cor:decay} and Equation~\eqref{eq:RICovariance}}\\
        &= \sum_{\smash{\underset{v+w\leq n}{v,w\in [n]}}} \P_{[-v,w]}(\Res{0}\!\geq\! k \cap P_{v,w} \mid \II{\{-v,w\}}) \\
    &\qquad\qquad\quad \cdot
        \left[\P_{[-m,-v]}(\RI{-v})\P_{[w,m]}(\RI{w})\!+\!\bigO{p^{n-v-w+1}} \right] +\bigO{p^{n}} \tag{by Lemma~\ref{lemma:distancePower}}\\
        &= \sum_{\smash{\underset{v+w\leq n}{v,w\in [n]}}} \P_{[-v,w]}(\Res{0}\!\geq\! k \cap P_{v,w} \mid \II{\{-v,w\}}) \P_{[-m,-v]}(\RI{-v})\P_{[w,m]}(\RI{w}) +\bigO{p^{n}} \tag{since $|P_{v,w}|=v+w-1$}\\
	&= \sum_{\underset{v+w\leq n}{v,w\in [n]}}\P_{[-m,m]}(\Res{0}\!\geq\! k \cap P_{v,w}) +\bigO{p^{n}} \tag{by Lemma~\ref{lemma:splitting}}\\[-1mm]
        &= \P_{[-m,m]}(\Res{0}\!\geq\! k) +\bigO{p^{n}} \tag{partition}
	\end{align*}  
    We conclude the proof by observing
    \begin{align*}
        \sum_{k=1}^\infty \P_{[-m,m]}(\Res{0}\!\geq\! k) +\bigO{p^{n}}= \E_{[-m,m]}(\Res{0})+\bigO{p^{n}} .
    \end{align*}
	\vskip-5mm
\end{proof}

\end{document}